% Template for the submission to:
%   The Annals of Probability           [aop]
%   The Annals of Applied Probability   [aap]
%   The Annals of Statistics            [aos] 
%   The Annals of Applied Statistics    [aoas]
%
%Author: In this template, the places where you need to add information
%        (or delete line) are indicated by {???}.  Mostly the information
%        required is obvious, but some explanations are given in lines starting
%Author:
%All other lines should be ignored.  After editing, there should be
%no instances of ??? after this line.

% use option [preprint] to remove info line at bottom
% journal options: aop,aap,aos,aoas
% natbib option: authoryear

%\documentclass[preprint,aop,authoryear]{imsart}
\documentclass[preprint,numbers,addressatend]{imsart}

\usepackage{amsthm,amsmath,natbib}
\usepackage{amsfonts,amssymb,graphicx}
\usepackage{color}
\RequirePackage[dvips]{hyperref}

% use this package if hyperref and natbib is used:
\RequirePackage{hypernat}

% provide arXiv number if available:
\arxiv{1009.5327}

% put your definitions there:
\startlocaldefs
\numberwithin{equation}{section}
\newtheorem{thm}{Theorem}[section]
\newtheorem{prop}[thm]{Proposition}
\newtheorem{lem}[thm]{Lemma}
\def\Indicator{\mathop{\hskip0pt{1}}\nolimits}

\theoremstyle{remark}
\newtheorem{assumption}{Assumption}
\newtheorem{remark}{Remark}

\newcommand{\var}{{\rm Var} \mspace{1mu}}

%\newenvironment{proof}[1][Proof]{\textbf{#1.} }{\ \rule{0.5em}{0.5em}}

% This are the margins I added 
\oddsidemargin  0pt \topmargin   -25pt \headheight 0pt \headsep 25pt
\textwidth   6.5in \textheight 8.5in \marginparsep 0pt
\marginparwidth 0pt
\oddsidemargin 0pt
\evensidemargin 0pt
\parskip 1ex  \parindent 0ex

\usepackage{graphicx}
\usepackage{verbatim}

\endlocaldefs

\begin{document}

\begin{frontmatter}

\title{Uniform Approximations for the M/G/1 Queue with Subexponential Processing Times}
\runtitle{Uniform Approximations for the M/G/1 Queue}

% indicate corresponding author with \corref{}
% \author{\fnms{John} \snm{Smith}\corref{}\ead[label=e1]{smith@foo.com}\thanksref{t1}}
% \thankstext{t1}{Thanks to somebody} 
% \address{line 1\\ line 2\\ printead{e1}}
% \affiliation{Some University}

\author{\fnms{Mariana} \snm{Olvera-Cravioto}\corref{}\ead[label=e1]{molvera@ieor.columbia.edu}}
\address{Department of Industrial Engineering  \\ and Operations Research \\ Columbia University \\ New York, NY 10027 \\ \printead{e1}}
\affiliation{Columbia University}
\and \hspace{4pt}
\author{\fnms{Peter W.} \snm{Glynn}\ead[label=e2]{glynn@stanford.edu}}
\address{Department of Management Science \\ and Engineering \\  Stanford University \\ Stanford, CA 94305 \\ \printead{e2}}
\affiliation{Stanford University}

\runauthor{M. Olvera-Cravioto and P. Glynn}

\begin{abstract}
This paper studies the asymptotic behavior of the steady-state waiting time, $W_\infty$, of the M/G/1 queue with subexponenential processing times for different combinations of traffic intensities and overflow levels. In particular, we provide insights into the regions of large deviations where the so-called heavy traffic approximation and heavy tail asymptotic hold. For queues whose service time distribution decays slower than $e^{-\sqrt{t}}$ we identify a third region of asymptotics where neither the heavy traffic nor the heavy tailed approximations are valid. These results are obtained by deriving approximations for $P(W_\infty > x)$ that are either uniform in the traffic intensity as the tail value goes to infinity or uniform on the positive axis as the traffic intensity converges to one.  Our approach makes clear the connection between the asymptotic behavior of the steady-state waiting time distribution and that of an associated random walk. 
\end{abstract}

\begin{keyword}[class=AMS]
\kwd[Primary ]{60K25}
\kwd[; secondary ]{68M20, 60F10}
\end{keyword}

\begin{keyword}
\kwd{Uniform approximations; M/G/1 queue; subexponential distributions; heavy traffic; heavy tails;  Cram\'{e}r series.}
\end{keyword}

%\begin{date}
%\date{February 17, 2011}
%\end{date}

\end{frontmatter}

\section{Introduction} 

We study in this paper the asymptotic behavior of the steady-state waiting time distribution of an M/G/1 queue with subexponential service time distribution and first-in-first-out (FIFO) discipline. The goal is to provide expressions that will allow us to identify the different types of asymptotic behavior that the queue experiences depending on different combinations of traffic intensity and overflow levels. We give our results for the special case of an M/G/1 queue with the idea that the insights that we obtain are applicable to more general queues and even to networks of queues. 

The special case of an M/G/1 queue with regularly varying processing times was previously analyzed in \cite{OlBlGl_10}, where it was shown that the behavior of $P(W_\infty > x)$, the steady-state waiting time distribution, can be fully described by the so-called heavy traffic approximation and heavy tail asymptotic  (see Theorems 2.1 and 2.2 in \cite{OlBlGl_10}). As pointed out in that work, the same type of results can be derived for a larger subclass of the subexponential family, in particular, for service time distributions whose tails decay slower than $e^{-\sqrt{t}}$. As the main results of this paper show, the behavior of $W_\infty$ for lighter subexponential service time distributions may include a third region where neither the heavy traffic approximation nor the heavy tail asympotic are valid, and where the higher order moments of the service time distribution start playing a role. The exact way in which these higher order moments appear in the distribution of $W_\infty$ is closely related to the large deviations behavior of an associated random walk and its corresponding Cram\'{e}r series. 

The approach that we take to understand the asymptotics of $P(W_\infty > x)$ over the entire line is to provide approximations that hold uniformly across all values of the traffic intensity for large values of the tail, or alternatively, uniformly across all tail values for traffic intensities close to one. From such uniform approximations it is possible to compute the exact thresholds separating the different regions of deviations of $W_\infty$, which for service time distributions decaying slower than $e^{-\sqrt{t}}$ are simply the heavy traffic and heavy tail regions, and, for lighter subexponential distributions, include a third region where neither the heavy traffic approximation nor the heavy tail asymptotic hold.  Similar uniform approximations have been derived in the literature for the tail distribution of a random walk with subexponential increments in \cite{Bor00}, \cite{Bor00b}, and \cite{Roz_93}, where the uniformity is on the number of summands for large values of the tail or across all tail values as the number of summands grows to infinity. The results in the paper are in some sense the equivalent for the single-server queue.

To explain the idea behind our main results let us recall that one can approximate the  tail distribution of the steady-state waiting time of a single-server queue with subexponential processing times, $P(W_\infty > x)$, via two well known approximations: the heavy traffic approximation and the heavy tail asymptotic
$$\exp\left\{- \frac{2(E\tau_1 - EV_1)}{\var \tau_1 + \var V_1} \, x \right\} \qquad \text{and} \qquad \frac{\rho}{1-\rho} \int_{x}^\infty \frac{P(V_1 > t)}{EV_1} dt,$$
respectively, where $V_1$ denotes the service time, $\tau_1$ the inter-arrival time, and $\rho$ the traffic intensity of the queue. We refer the reader to Chapter X of \cite{Asm2003} and the references therein for more details on the history and the exact formulation of these limit theorems. The heavy traffic approximation is valid for the general GI/GI/1 queue and can be derived by using a functional Central Limit Theorem type of analysis (see, e.g. \cite{IgWh70a, IgWh70b}). The theorem that justifies this approximation is obtained by taking the limit as the traffic intensity approaches one and is applicable for bounded values of $x$. The heavy tail asymptotic is valid for the GI/GI/1 FIFO queue with subexponential service time distribution (see, e.g., \cite{EmVe82}), and is obtained by taking the limit as $x$ goes to infinity for a fixed traffic intensity, that is, it is applicable for large values of $x$.  One can then think of combining these two approximations to obtain an expression that is uniformly valid on the entire positive axis.  

The approach we take in the derivation of the main theorems is to start with the Pollaczek-Khintchine formula for the distribution of the steady-state waiting time of the M/G/1 queue, which expresses it as a geometric random sum, and use the asymptotics for the tail distribution of the random walk. One of the difficulties in obtaining uniform asymptotics for the distribution of $W_\infty$ lies in the highly complex asymptotic behavior of the random walk. Surprisingly, most of the cumbersome details of the asymptotics for the random walk disappear in the queue, but showing that this is indeed the case requires a considerable amount of work.  The qualitative difference between queues with service time distributions with tails decaying slower than $e^{-\sqrt{t}}$ and their lighter-tailed counterparts comes from the asymptotic behavior of the random walk associated to the geometric random sum.  The function $e^{-\sqrt{t}}$ has been identified as a threshold in the behavior of heavy tailed sums and queues in \cite{Nagaev_69a, Bor00}, and \cite{Jel_Mom_03, Jel_Mom_04, Bal_Dal_Klupp_04}, respectively, to name a few references, and we provide here yet another example. 

As mentioned before, the approximations we provide can be used to derive the exact regions where the heavy traffic and heavy tail approximations hold, but we do not provide the details in this paper since our focus is on deriving uniform expressions for $P(W_\infty > x)$ under minimal conditions on the service time distribution. The setting we consider is the same from \cite{Jel_Mom_04, Bal_Dal_Klupp_04} where the busy period was analyzed. More detailed comments about the third region of asymptotics that arises when the service time distribution is lighter than $e^{-\sqrt{t}}$ can be found in Remark \ref{R.MainRemarks} right after Theorem \ref{T.Main}. For clarity, we state all our assumptions and notation in the following section, and our main results in Section \ref{S.MainResults}. 

Finally, we mention that the expressions given in the main theorems can be of practical use as numerical approximations for $P(W_\infty > x)$, and based on simulation experiments done for service times with a Pareto ($\alpha > 3$) or Weibull ($0 < \alpha < 1/2$) distribution, they seem to perform very well (see Section 4 in \cite{OlBlGl_10}). It is worth pointing out that the uniform approximations given here are far superior than the heavy traffic or heavy tail approximations individually even in the regions where these are valid, which is to be expected since they are based on the entire Pollaczek-Khintchine formula; they are also easy to compute given the integrated tail distribution of the processing times and its first few moments (cumulants).

\section{Model Description} \label{S.ModelDescription} 

Let $(W_n(\rho): n\geq 0)$ be the waiting time sequence for an M/G/1 FIFO
queue that is fed by a Poisson arrival process having arrival rate $\lambda = \rho/EV_1$ and independent iid processing times $(V_n: n\geq 0)$.  Provided that the traffic intensity $\rho$ is smaller than one, we denote by $W_\infty(\rho)$ the steady-state waiting time of the queue. We assume that $G(x) = P(V_1 \leq x)$ is such that its integrated tail distribution, given by $F(x) = \int_0^ x \overline{G}(t) dt/E V_1$ is subexponential, where $\overline{G}(t) = 1- G(t)$. The sequence $\{X_i\}_{i \geq 1}$ will denote iid random variables having distribution $F$. 

Define $Q(t) = -\log \overline{F}(t)$ to be the cumulative hazard function of $F$ and let $q(t) = (EV_1)^{-1} \overline{G}(t)/ \overline{F}(t)$ be its hazard rate function; note that $q$ is the density of $Q$. Just as in \cite{Bal_Dal_Klupp_04} and \cite{Bal_Klupp_04}, we define the hazard rate index
\begin{equation} \label{eq:r_Def} 
r = \limsup_{t \to \infty} \frac{t q(t)}{Q(t)}.
\end{equation}

All the results presented in this paper hold for subexponential distributions $G$ (its corresponding integrated tail distribution $F$) satisfying the following assumption. 

\begin{assumption} \label{A.Hazard}
\begin{enumerate} 
\item $0 \leq r < 1$; 
\item $\liminf_{t \to \infty} t q(t) > a(r)$, where $a(r) = \begin{cases} 2, & \text{ if } r = 0, \\ 4/(1-r), & \text{ if } r \neq 0. \end{cases}$
\end{enumerate}
\end{assumption}

Assumption \ref{A.Hazard} is consistent with Conditions B and C in \cite{Bal_Dal_Klupp_04} and \cite{Bal_Klupp_04}, respectively, and also very closely related to Definition 1 in \cite{Jel_Mom_04}.  All three of these works study the asymptotic behavior of random sums with subexponential increments applied to either the study of the busy period of a GI/GI/1 queue or to ruin probabilities in insurance.  Also, by Proposition 3.7 in \cite{Bal_Dal_Klupp_04}, Assumption \ref{A.Hazard} (a.) is equivalent to the function $Q(t)/t^{r+\delta}$ being decreasing on $t \geq t_0 \geq 1$ for any $0 < \delta < 1-r$, which is the same as equation (3) in \cite{Roz_93}, where uniform asymptotics for the tail behavior of a random walk with subexponential increments were derived. As mentioned in \cite{Bal_Dal_Klupp_04} and \cite{Bal_Klupp_04}, Lemma~3.6 in \cite{Bal_Dal_Klupp_04} implies that $\sup\{ k: E[X_1^k] < \infty\} \geq \liminf_{t \to \infty} t q(t)$, so Assumption~\ref{A.Hazard} (b.) guarantees that $E[X_1^k] < \infty$ for all $k \leq a(r)$. Furthermore, Assumption \ref{A.Hazard} (b.) and Lemma 3.6 in \cite{Bal_Dal_Klupp_04} together imply that $\liminf_{t \to \infty} Q(t)/\log t \geq \liminf_{t \to \infty} t q(t) > a(r)$, which in turn implies that for some $\beta > a(r) \geq 2$ and $t_0 > 1$,  \vspace{-3pt}
\begin{equation} \label{eq:LowerBoundQ}
Q(t) \geq \beta \log t \quad \text{for all }  t \geq t_0. \vspace{-3pt}
\end{equation}

Although the tail distribution of the busy period in queues with heavy tailed service times is related to that of its waiting time in the sense that it is determined by $\overline{G}(x)$ (see \cite{Zwart_01, Jel_Mom_04, Bal_Dal_Klupp_04, Bal_Klupp_04}), the approach to its analysis is rather different from that of the waiting time, so the only connection between the results in this paper and those cited above is the setting.

This family of distributions includes in particular all regularly varying distributions, $\overline{F}(x) = x^{-\alpha} L(x)$ with $\alpha > 2$, and all semiexponential distributions, $\overline{F}(x) = e^{-x^\alpha L(x)}$ with $0 \leq \alpha < 1$; in these definitions $L$ is a slowly varying function. The regularly varying case with $\alpha > 1$ was covered in detail in \cite{OlBlGl_10}. Some subexponential distributions that do not satisfy Assumption \ref{A.Hazard} are those decaying ``almost" exponentially fast, e.g. $\overline{F}(x) = e^{-x/\log x}$. 

Before stating our main results in the following section, we introduce some more notation that will be used throughout the paper. Let $\mu = EX_1 = EV_1^2/(2 EV_1)$ and $\sigma^2 = \var(X_1) = EV_1^3/(3EV_1) - (EV_1^2/ (2EV_1))^2$. Also, define
\begin{equation} \label{eq:kappa}
\kappa = \max\left\{ l \in \{0, 1, \dots\}: \limsup_{t \to \infty} \frac{Q(t)}{t^{l/(l+1)}} > 0 \right\} + 2,
\end{equation}
and note that by Proposition 3.7 in \cite{Bal_Dal_Klupp_04}, $Q(t)/t^{r+\delta}$ is eventually decreasing for all $\delta > 0$, which implies that $Q(t)/t^{r+\delta} \to 0$ for all $\delta > 0$. In particular, for $r \in [0, 1/2)$ this implies that $Q(t)/t^{1/2} \to 0$ and $\kappa = 2$. Also, we obtain the relation $(\kappa-2)/(\kappa-1) \leq r$, or equivalently, $\kappa \leq (2-r)/(1-r)$. Combining this observation with our previous remark about Assumption \ref{A.Hazard} (b.) gives that for $0 < r < 1$ and any $2 \leq s \leq (2+r)/(1-r)$ we have $E[X_1^{\kappa+s}] < \infty$.

%%% Section Main results
\section{Main results} \label{S.MainResults}

As mentioned in the introduction, the idea of this paper is to use the Pollaczek-Khintchine formula to write the distribution of the steady-state waiting time as 
\begin{equation} \label{eq:Poll-Khin} 
P(W_\infty(\rho) > x) = \sum_{n=0}^\infty (1-\rho) \rho^n P(S_n > x), 
\end{equation}
where $S_n = X_1 + \dots + X_n$ and $\{ X_i\}_{i \geq 1}$ is a sequence of iid random variables having distribution $F$, and then approximate $P(S_n > x)$ by an appropriate asymptotic expression. The approximation that we use for $P(S_n > x)$ depends on the pair $(x,n)$, and for the heavy-tailed setting that we consider here, one can identify four different regions of deviations.

The first region is the one described by the Central Limit Theorem (CLT), i.e., where \vspace{-3pt}
\begin{equation} \label{eq:CLT}
P(S_n > x) \approx 1 - \Phi\left( (x - n\mu)/\sqrt{n} \sigma \right), 
\end{equation}
and $\Phi(\cdot)$ is the standard normal distribution function. The second region is the so-called Cram\'{e}r region, which provides additional correction terms to the CLT approximation. When the distribution $F$ has finite exponential moments, the Cram\'{e}r approximation is given by 
$$\frac{P\left((S_n - n\mu)/\sigma  > x \right)}{1 - \Phi(x/\sqrt{n})} = \exp\left( n \lambda\left( \frac{x}{n} \right)  \right) \left(1 + O\left( \frac{x/\sqrt{n}+1}{\sqrt{n}} \right) \right),$$
where $\lambda(t) \triangleq \sum_{j=3}^\infty \lambda_j t^{j}/j!$ is a power series with coefficients depending on the cumulants of $X_1$ known in the literature as the Cram\'{e}r series (see \cite{Petrov1975}, Chapter VIII, \S 2, or \cite{Nagaev_65}). When $F$ is heavy-tailed,  nevertheless, $\lambda(t)$ diverges for all $t$ and a truncated form of this series replaces $\lambda(\cdot)$. In the setting of this paper, only the terms up to $\kappa$ (as defined by \eqref{eq:kappa}) are needed, and we obtain the following approximation for $P(S_n > x)$ 
\begin{equation} \label{eq:CramerAsym} 
P(S_n > x) \approx \left(1 - \Phi((x-n\mu)/\sqrt{n}\sigma) \right) e^{\frac{1}{2} \left( \frac{x-n\mu}{\sigma\sqrt{n}} \right)^2 + nQ_\kappa\left( \frac{x-n\mu}{\sigma n} \right)}, 
\end{equation}
where 
\begin{equation} \label{eq:Q_kappa} 
Q_\kappa(t) = \sum_{j=2}^\kappa \frac{\lambda_j t^j}{j!},
\end{equation}
$\lambda_2 = -1$, and $\{\lambda_j\}_{j \geq 3}$ are the coefficients of the Cram\'{e}r series corresponding to $Y_1 = (X_1 - \mu)/\sigma$. Note that if $\kappa = 2$, then approximations \eqref{eq:CLT} and \eqref{eq:CramerAsym} are the same. 

The third region is known in the literature as the ``intermediate domain", and the exact asymptotics for $P(S_n > x)$ in this region can be considerably complicated (see \cite{Bor00b} and \cite{Roz_93} for more details). Fortunately, the range of values corresponding to this region in the Pollaczek-Khintchine formula is negligible with respect to the rest, and we will only need to use an upper bound for $P(S_n > x)$. The fourth and last region is the heavy-tailed region, also know as the ``big jump domain" (see \cite{Bor00} and \cite{Den_Die_Shn_08}, for example), where 
$$P(S_n > x) \approx n \overline{F}(x-n\mu).$$

In the discussion above we purposefully omitted describing the boundaries between the four different regions, since that alone requires introducing various (complicated) functions and their corresponding asymptotic behavior. In terms of the Pollaczek-Khintchine formula, it is enough to consider simpler versions of those thresholds. We start by defining the functions 
\begin{equation*} 
\omega_1(t) = t^2/(Q(t) \vee 1) \quad \text{and} \quad \omega_2(t) = t^2/(Q(t) \vee 1)^2,
\end{equation*}
where $x \vee y = \max\{x, y\}$ ($x \wedge y = \min\{x,y\}$), and let $\omega_i^{-1}(t) = \inf\{u \geq 0: t \leq \omega_i(u)\}$, $i = 1,2$. We give below some properties of the $\omega^{-1}$ operator; the proof is omitted but can be derived through straightforward analysis. 

\begin{lem} \label{L.RightInverse}
For any continuous function $\omega: [0,\infty) \to [0,\infty)$ such that $\lim_{t \to \infty} \omega(t) = \infty$, define the function $\omega^{-1}: [0,\infty) \to [0,\infty)$ as $\omega^{-1}(t) = \inf\{ u \geq 0: t \leq \omega(u) \}$. Then, the following are true 
\begin{enumerate}
\item $\omega^{-1}$ is monotone non decreasing and left-continuous.
\item $\omega^{-1}$ is a right inverse of $\omega$, that is, $\omega(\omega^{-1}(t)) = t$, for all $t \geq \omega(0)$. 
\item if $\overline{\omega}(t) = \sup_{0 \leq s \leq t} \omega(s)$, then $\omega^{-1}$ is a right inverse of $\overline{\omega}$ for all $t \geq \omega(0)$. 
\item $\omega^{-1}(\omega(t)) \leq t$ for all $t \geq 0$. 
\end{enumerate} 
\end{lem}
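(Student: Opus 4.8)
The plan is to work throughout with the level set $A_t = \{u \geq 0: \omega(u) \geq t\}$, whose infimum is by definition $\omega^{-1}(t)$. Two structural facts about $A_t$ drive everything: first, $A_t$ is nonempty for every $t \geq 0$, which is immediate from $\omega(u) \to \infty$; second, $A_t$ is closed, since it is the preimage of $[t,\infty)$ under the continuous map $\omega$ intersected with $[0,\infty)$. Being nonempty, closed, and bounded below by $0$, the set $A_t$ attains its infimum, so $\omega^{-1}(t) \in A_t$; in particular
\[
\omega(\omega^{-1}(t)) \geq t \qquad \text{for all } t \geq 0 .
\]
This single inequality, together with an intermediate value argument, is the engine behind all four parts.

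For part (a), monotonicity is just the observation that $s \leq t$ implies $A_t \subseteq A_s$, hence $\omega^{-1}(s) = \inf A_s \leq \inf A_t = \omega^{-1}(t)$. For left-continuity I would take $t_n \uparrow t$; by monotonicity $\omega^{-1}(t_n)$ is non-decreasing and bounded above by $\omega^{-1}(t)$, so it converges to some $L \leq \omega^{-1}(t)$. Applying the displayed inequality at each $t_n$ and letting $n \to \infty$, continuity of $\omega$ gives $\omega(L) \geq t$, i.e. $L \in A_t$, so $L \geq \omega^{-1}(t)$; hence $L = \omega^{-1}(t)$.

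For part (b), fix $t \geq \omega(0)$ and set $v = \omega^{-1}(t)$, so $\omega(v) \geq t$. If this inequality were strict, then from $\omega(0) \leq t < \omega(v)$ the intermediate value theorem produces $u \in [0,v)$ with $\omega(u) = t$, so that $u \in A_t$ yet $u < \inf A_t$, a contradiction; hence $\omega(v) = t$. Part (c) follows quickly: with $\bar\omega(t) = \sup_{0 \leq s \leq t}\omega(s)$ we have $\bar\omega(v) \geq \omega(v) = t$, while for any $s \in [0,v]$ the inequality $\omega(s) > t$ would place $s$ in $A_t$ strictly below its infimum unless $s = v$, where $\omega(v) = t$; thus $\omega(s) \leq t$ on $[0,v]$, giving $\bar\omega(v) \leq t$ and hence equality. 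Finally, part (d) is immediate, since $t$ itself lies in $\{u \geq 0 : \omega(u) \geq \omega(t)\}$, so the infimum of that set is at most $t$.

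I do not expect a genuine obstacle here; the only points requiring care are the use of continuity of $\omega$ to guarantee that $A_t$ is closed (needed both for left-continuity and for the intermediate value step), and the role of the hypothesis $t \geq \omega(0)$ in parts (b) and (c), which is exactly what rules out the degenerate situation $\omega^{-1}(t) = 0$ with $\omega(0) > t$.
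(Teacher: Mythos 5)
Your argument is correct in all four parts: the key observations that $A_t = \{u \geq 0 : \omega(u) \geq t\}$ is nonempty and closed (hence contains its infimum, giving $\omega(\omega^{-1}(t)) \geq t$), and that continuity of $\omega$ delivers left-continuity and the intermediate value step in part (b), are exactly what is needed. The paper explicitly omits the proof of this lemma, describing it as "straightforward analysis," so there is no reference proof to compare against; your write-up is a clean and complete version of the intended elementary argument.
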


We now define the threshold functions delimiting the different regions of asymptotics for $P(S_n > x)$. Let
$$K_r(x) = \begin{cases}
\lfloor (x -\omega_2^{-1}(x))/\mu \rfloor \vee 0, & \text{if } r \in [0, 1/2), \\
\lfloor \min\{\omega_2(x), x/(2\mu) \} \rfloor \vee 0, & \text{if } r \in [1/2, 1),
\end{cases} $$
$$M(x) = \lfloor (x - \omega_1^{-1}(x))/\mu \rfloor \vee 0, \qquad \text{and} \qquad N(x) = \lfloor (x - \sqrt{x\log x})/\mu \rfloor \vee 0.$$
Note that if $r \in [0,1/2)$ and if $\delta > 0$ is such that $r + \delta < 1/2$, then $\omega_2(t) \geq Ct^{2(1-r-\delta)}$ for some constant $C > 0$, so $\omega_2^{-1}(t) \leq Ct^{1/(2(1-r -\delta))} = o(t)$. Also, provided $r + \delta \in (0,1)$, $\omega_1(t) \geq Ct^{2-r -\delta}$ so $\omega_1^{-1}(t) \leq Ct^{1/(2-r-\delta)} = o(t)$. Therefore, all three functions are strictly positive for large enough $x$. Moreover, as mentioned in the previous section, Assumption \ref{A.Hazard} (b.) implies that $Q(t) \geq \beta \log t$ for all $t \geq t_0$ for some $\beta > a(r) \geq 2$, which gives $\omega_2(t) \leq \omega_1(t) \leq \beta^{-1} t^2/\log t$, and $\omega_2^{-1}(x) \geq \omega_1^{-1}(x) \geq \sqrt{(\beta/2) x \log x}$ for all $x \geq x_0$. We then have that $K_r(x) \leq M(x) \leq N(x)$ for all large enough $x$. 

To better understand the definitions of the threshold functions consider the zero mean case with finite variance, for which it is well known that the CLT approximation \eqref{eq:CLT} holds for $x = O(\sqrt{n})$; translating into the positive mean case, this gives rise to the threshold $n \geq (x- \sqrt{c x})/\mu$ for some constant $c > 0$. Substituting the constant by $\log x$ gives the threshold $N(x)$. The Cram\'er approximation \eqref{eq:CramerAsym} holds, in the zero mean case, uniformly for $x \leq \sigma_1(n)$, where $\sigma_1(n)$ is the solution to the equation $x^2 = nh(x)$ and $E[ e^{h(X_1)} \Indicator(X_1 \geq 0)] < \infty$ (see, \cite{Borov_Borov_2008} \S 5.1 and the references therein); taking $h = Q$ gives the threshold $n \geq\omega_1(x)$, and translating into the positive mean case gives $n \geq (x-\omega_1^{-1}(x/\mu))/\mu$. Note that $E[e^{Q(X_1)} \Indicator(X_1 \geq 0)] = \infty$ but, for example, $E[e^{Q(x) -2\log Q(x)} \Indicator(X_1 \geq 0)] < \infty$, so this choice of $h$ is very close to  the boundary of the region. Finally, the asymptotic $P(S_n > x) \sim n \overline{F}(x)$ as $x \to \infty$ is known to hold, in the mean zero case, for $n \leq c \omega_2(x)$ (see Theorem 1 in \cite{Bal_Klupp_04}), and provided that $\omega_2^{-1}(x) = o(x)$ (which occurs when $r \in [0, 1/2)$), the translation into the positive mean case gives the threshold $n \leq (x-\omega_2^{-1}(x/\mu))/\mu$. When $r \in [1/2, 1)$ we cannot guarantee that $\omega_2(x) \leq x/\mu$, so by taking the minimum between $\omega_2(x)$ and $x/(2\mu)$ we satisfy the condition $n \leq \omega_2(x-n\mu)$, and therefore our choice of $K_r(x)$. We point out that since the thresholds do not need to be too precise, we ignored the constant $\mu$ inside of $\omega_1^{-1}$ and $\omega_2^{-1}$ in the definitions of $M(x)$ and $K_r(x)$, respectively, to simplify the expressions.

The first asymptotic for $P(W_\infty(\rho) > x)$ we propose is given by the following expression based on the Pollaczek-Khintchine formula, for $\kappa = 2$, 
\begin{equation} \label{eq:Z_Def_2}
Z_\kappa(\rho,x) = \sum_{n=1}^{K_r(x)} (1-\rho)\rho^n n \overline{F}(x-n\mu) + E\left[ \rho^{a(x,Z)} \Indicator(\sigma Z \leq \sqrt{\mu} \omega_1^{-1}(x)/\sqrt{x}) \right],
\end{equation}
and for $\kappa > 2$,
\begin{align}
Z_\kappa(\rho,x) &= \sum_{n=1}^{K_r(x)} (1-\rho)\rho^n n \overline{F}(x-n\mu) + \frac{\sigma \sqrt{x}}{\sqrt{2\pi \mu}} \sum_{n=M(x)+1}^{N(x)} (1-\rho) \rho^n \frac{e^{n Q_\kappa \left( \frac{x-n\mu}{\sigma n} \right)} }{x-n\mu} \notag  \\
&\hspace{3mm} + E\left[ \rho^{a(x,Z)} \Indicator(\sigma Z \leq \sqrt{\mu\log x}) \right], \label{eq:Z_Def_big2}
\end{align}
where $Z\sim$ N(0,1) and $a(x,z) = \left(x - \sigma z \sqrt{x/\mu}\right)/\mu$. Throughout the paper we use the convention that $\sum_{n=A}^B a_n \equiv 0$ whenever $B < A$. Our first theorem is formally stated below.

\begin{thm} \label{T.SumApprox}
Suppose Assumption \ref{A.Hazard} is satisfied, and define $Z_\kappa(\rho,x)$ according to \eqref{eq:Z_Def_2} and \eqref{eq:Z_Def_big2}. Then, 
$$\lim_{x \to \infty} \sup_{0 < \rho < 1} \left| \frac{P(W_\infty(\rho) > x)}{Z_\kappa(\rho,x) } - 1 \right|  = 0.$$
\end{thm}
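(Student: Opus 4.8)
The plan is to split the Pollaczek--Khintchine sum \eqref{eq:Poll-Khin} into the four ranges of $n$ delimited by $K_r(x)$, $M(x)$, $N(x)$, and then treat each range with the appropriate asymptotic for $P(S_n > x)$, showing that the sum of the contributions, after discarding negligible pieces, equals $Z_\kappa(\rho,x)(1+o(1))$ uniformly in $\rho$. First I would dispose of the trivial regime: for $x$ bounded away from infinity there is nothing to prove, and for $\rho$ very close to $0$ or to $1$ one checks directly (using $\overline{F}$ subexponential and $\sum_n (1-\rho)\rho^n = 1$) that both $P(W_\infty(\rho)>x)$ and $Z_\kappa(\rho,x)$ are controlled. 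The heart of the argument is the uniform-in-$\rho$ control, which forces me to carry all error terms with constants independent of $\rho$; the device that makes this possible is that $\sum_{n}(1-\rho)\rho^n(\cdots)$ is a probability-weighted average, so a bound of the form $P(S_n>x) = (\text{main}_n)(1+\varepsilon_n(x))$ with $\sup_n |\varepsilon_n(x)| \to 0$ transfers directly to the sum.

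Next I would handle the four ranges. For $1 \le n \le K_r(x)$ (the big-jump domain), I invoke the known single-big-jump asymptotics $P(S_n>x) \sim n\overline{F}(x-n\mu)$, valid uniformly for $n \le c\,\omega_2(x-n\mu)$, which is exactly how $K_r(x)$ was rigged in both the $r<1/2$ and $r\ge 1/2$ cases; this produces the first sum in \eqref{eq:Z_Def_2}/\eqref{eq:Z_Def_big2}. For $K_r(x) < n \le M(x)$ (the intermediate domain) I only need the upper bound alluded to in the text, together with the observation that $\omega_2^{-1}(x)$ and $\omega_1^{-1}(x)$ are both $O(\sqrt{x\log x})$-ish and differ by a lower-order amount, so that $\sum_{K_r(x)<n\le M(x)}(1-\rho)\rho^n P(S_n>x)$ is negligible relative to $Z_\kappa$ --- here I would use \eqref{eq:LowerBoundQ} to bound $\overline{F}$ and the Cram\'er-type upper estimate to show the mass carried by these $n$ is swallowed by the error term. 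For $M(x) < n \le N(x)$ (the Cram\'er domain) I substitute \eqref{eq:CramerAsym}, and then simplify: when $\kappa = 2$ the exponent collapses and the sum telescopes, via the local CLT / Gaussian approximation to $\rho^n$, into the expectation $E[\rho^{a(x,Z)}\Indicator(\sigma Z \le \sqrt{\mu}\,\omega_1^{-1}(x)/\sqrt{x})]$ of \eqref{eq:Z_Def_2}; when $\kappa>2$ the Gaussian factor $1-\Phi((x-n\mu)/(\sigma\sqrt n))$ times $e^{\frac12((x-n\mu)/(\sigma\sqrt n))^2}$ is replaced by its asymptotic $\frac{\sigma\sqrt n}{\sqrt{2\pi}(x-n\mu)}$, giving the middle sum of \eqref{eq:Z_Def_big2}, while the residual near $n\approx N(x)$ where the CLT rather than Cram\'er correction dominates gets absorbed into the expectation term. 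Finally, for $n > N(x)$ (the CLT domain) I use \eqref{eq:CLT}: writing $1-\Phi((x-n\mu)/(\sigma\sqrt n)) = P(\sigma Z \ge (x-n\mu)/\sqrt n)$ and interchanging the order of summation over $n$ and the Gaussian expectation, the sum $\sum_{n>N(x)}(1-\rho)\rho^n P(\sigma Z\sqrt{n} \ge x-n\mu)$ becomes, after the change of variable $n = a(x,z)$ and a Riemann-sum/continuity argument, precisely $E[\rho^{a(x,Z)}\Indicator(\cdots)]$ up to $o(1)$ errors; the indicator cutoff ($\sqrt{\mu}\,\omega_1^{-1}(x)/\sqrt{x}$ when $\kappa=2$, $\sqrt{\mu\log x}$ when $\kappa>2$) is exactly the boundary between this range and the preceding one, so the pieces match up.

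The main obstacle, I expect, is the uniformity in $\rho$ in the Cram\'er and CLT regions simultaneously with letting $x\to\infty$: the ranges $[M(x),N(x)]$ and $[N(x),\infty)$ have lengths of order $\sqrt{x\log x}$ and the per-term errors in \eqref{eq:CramerAsym}/\eqref{eq:CLT} are only $O((x/\sqrt n + 1)/\sqrt n)$, which is $o(1)$ for each fixed $n$ but must be shown to remain $o(1)$ after weighting by $(1-\rho)\rho^n$ and summing, for all $\rho\in(0,1)$ at once --- in particular when $\rho$ is chosen (depending on $x$) so that the geometric weights concentrate exactly on the problematic boundary $n\approx M(x)$ or $n\approx N(x)$. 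Handling this requires showing that near those boundaries the main term $Z_\kappa(\rho,x)$ itself is not too small, i.e. a matching lower bound on $Z_\kappa$, which is where the precise placement of the thresholds (and the fact, noted in the text, that $E[e^{Q(x)-2\log Q(x)}\Indicator(X_1\ge 0)]<\infty$ so that $Q$ is genuinely near the edge of the Cram\'er region) does the work. A secondary technical point is controlling the replacement of the discrete sum over $n$ by a Gaussian integral uniformly --- a quantitative local CLT / Euler--Maclaurin estimate --- but given Assumption \ref{A.Hazard}(b.) guarantees enough finite moments of $X_1$ (indeed $E[X_1^{\kappa+s}]<\infty$ for the relevant $s$), this should be routine once the bookkeeping is set up.
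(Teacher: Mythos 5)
Your high-level plan — decompose the Pollaczek--Khintchine sum by region, apply the big-jump/Cram\'er/CLT asymptotics in the appropriate ranges, then control uniformity in $\rho$ via a lower bound on $Z_\kappa$ — matches the structure of the paper's proof (which goes through an intermediate approximation $S_\kappa$ built from Rozovskii's uniform random-walk asymptotic, then bounds the errors $E_1, E_2, E_3$ term by term). Your identification of the boundary-matching between the indicator cutoffs and the sum ranges is also correct. But there is a genuine gap in the reasoning for the intermediate domain, and the uniformity mechanism is left essentially undetermined.

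The concrete error is the claim that ``$\omega_2^{-1}(x)$ and $\omega_1^{-1}(x)$ are both $O(\sqrt{x\log x})$-ish and differ by a lower-order amount''. Since $\omega_2(t) = \omega_1(t)/Q(t)$, we have $\omega_2^{-1}(x)/\omega_1^{-1}(x) \to \infty$ whenever $Q(t) \to \infty$; for a semiexponential with $Q(t) = t^\alpha$ ($0<\alpha<1/2$), one gets $\omega_1^{-1}(x) \asymp x^{1/(2-\alpha)}$ but $\omega_2^{-1}(x) \asymp x^{1/(2-2\alpha)}$, a genuinely higher power of $x$. The intermediate region $K_r(x) < n \leq M(x)$ therefore has length of order $\omega_2^{-1}(x)/\mu$, which is much longer than $\sqrt{x\log x}$, so its negligibility cannot follow from its shortness. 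The actual reason this region is negligible --- handled in the paper via the bound on $J(y,n)$ in Lemma~\ref{L.In_Bound} and the integral estimates in Proposition~\ref{P.E3}, exploiting that $Q(t)/t^s$ is eventually decreasing for $s\in(r,1)$ --- is that $P(S_n>x)$ decays fast enough in this range, relative to the heavy-tail part of $Z_\kappa$, to be absorbed after geometric weighting; it is a comparison of exponents, not of lengths.

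The second underdeveloped point is the uniformity in $\rho$. You correctly observe that a lower bound on $Z_\kappa(\rho,x)$ is needed, but the appeal to ``$Q$ being near the edge of the Cram\'er region'' does not yield one. The paper obtains the lower bound by direct computation (Lemma~\ref{L.LowerBound}): for $\rho$ below a threshold $\hat\rho(x)=e^{-c\mu Q(x)/x}$, $Z_\kappa \gtrsim \rho(1-\rho)^{-1}\overline F(x)$ from the heavy-tail sum, while for $\rho$ above it, $Z_\kappa \gtrsim e^{\frac{x}{\mu}\Lambda_\rho(u(\rho))}$ from the Gaussian expectation term. The crucial device that converts this into uniformity is then Lemma~\ref{L.UniformBound}, which shows that $\sup_{0<\rho<1} \sum_n (1-\rho)\rho^n \alpha(n,x)/Z_\kappa(\rho,x)$ is controlled by a $\rho$-free sum evaluated at the single boundary value $\hat\rho(x)$ --- precisely because $(1-\rho)^2\rho^{n-1}$ is monotone in $\rho$ on $(0,\hat\rho(x)]$ for $n$ in the relevant range, and because $Z_\kappa$ itself absorbs the $\rho$-dependence on the complementary range. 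Without this device (or an equivalent), the concern you flag about $\rho$ being chosen adversarially near the transition cannot be resolved, since the per-term errors and the weights $\rho^n$ interact in a way that does not separate.

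Finally, a smaller but relevant point: the thresholds $M(x)$ and $N(x)$ are not where the rigorous random-walk uniform asymptotic switches regimes. Rozovskii's theorem (Theorem~\ref{T.Rozovskii}) switches at $C_n$, which is the minimizer of $t(\tfrac12 + n\tilde Q(t)/t^2)$ and satisfies $C_n \asymp b^{-1}(\mu n)$, not at $(x-\omega_1^{-1}(x))/\mu$. The paper's $E_1$ term is precisely the cost of replacing the $C_n$-based indicator by the $M(x)$-based one, and this step requires its own uniform estimate (Proposition~\ref{P.E1}). Substituting the heuristic ``$\approx$'' of~\eqref{eq:CramerAsym} directly, as your plan does, skips the rigorous intermediary and would require re-deriving its uniform validity from scratch.
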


\begin{remark}
(i) We point out that the approximation given by $Z_\kappa(\rho,x)$ is explicit in the sense that given the exact form of $F$, all the functions and parameters involved in the approximation are known. In particular, 
$$E\left[ \rho^{a(x,Z)} \Indicator\left(\sigma Z \leq \sqrt{\mu} \, T \right) \right] =  \rho^{\frac{x}{\mu}} e^{\frac{\sigma^2 (\log\rho)^2 x}{2 \mu^3}} \Phi\left( \frac{\sqrt{\mu} \, T}{\sigma} + \frac{\sigma \sqrt{x}}{\mu^{3/2}}\log \rho \right).$$
(ii) This approximation is suitable for numerical computations since it involves no integrals or infinite sums. (iii) With some additional work once can show that the first term in \eqref{eq:Z_Def_2} and \eqref{eq:Z_Def_big2} can be replaced by
$$\overline{F}(x) \sum_{n=1}^{K_r(x)} (1-\rho)\rho^n n,$$
which is asymptotically equivalent to the heavy tail asymptotic $\rho \overline{F}(x)/(1-\rho)$ for  appropriate values of $(x,\rho)$. We choose not to use this simpler expression because our numerical experiments show that it would result in a less accurate approximation for $P(W_\infty(\rho) > x)$.  (iv) For the case $\kappa > 2$, the middle term in \eqref{eq:Z_Def_big2} provides a direct connection between the Cram\'{e}r region of asymptotics for the random walk and the asymptotic behavior of the queue, and also reiterates the qualitative difference between distributions decaying slower than $e^{-\sqrt{x}}$ $(\kappa = 2)$ and those with lighter tails (see \cite{Nagaev_69a}, \cite{Jel_Mom_03}, \cite{Jel_Mom_04}, to name some references).  (v) Unlike the next approximation, given in Theorem \ref{T.Main}, the expression $Z_\kappa(\rho,x)$ does not work as a uniform asymptotic in $x > 0$ as $\rho \nearrow 1$ for $P(W_\infty(\rho) > x)$, since it does not converge to one for small values of $x$. Nevertheless, it is not difficult to show that
$$\lim_{\rho \nearrow 1} \sup_{x \geq \hat x(\rho)}  \left| \frac{P(W_\infty(\rho) > x)}{Z_\kappa(\rho,x) } - 1 \right|  = 0$$
for any $\hat x(\rho) \to \infty$ as $\rho \nearrow 1$ (see the proof of Lemma 3.3 in \cite{OlBlGl_10}). 
\end{remark}

In the same spirit of the heavy traffic approximations in \cite{Wh95} and \cite{BlGl07}, where $P(W_\infty(\rho) > x)$ is approximated by $e^{x S(\rho)}$ where $S(\rho)$ is a power series in $(1-\rho)$, our second result derives an approximation that involves a power series in $\log\rho$. The number of terms in this power series is also determined by $\kappa$ (as in the definition of $Q_\kappa(\cdot)$), and its coefficients are closely related to those of the Cram\'{e}r series.  This other approximation substitutes the second term in \eqref{eq:Z_Def_2} and the second and third terms in \eqref{eq:Z_Def_big2} by their corresponding asymptotic expression as $\rho \nearrow 1$. The intuition behind this substitution is that these terms only dominate the behavior of $Z_\kappa(\rho,x)$ when the effects of the heavy traffic are more important than those of the heavy tails. Besides unifying the cases $\kappa = 2$ and $\kappa > 2$, this new approximation will also have the advantage of being uniformly good for $x > 0$ as $\rho \nearrow 1$. In order to state our next theorem we need the following definitions. 

Let 
\begin{equation} \label{eq:LambdaDef}
\Lambda_\rho(t) = (1-t) \log\rho + \sum_{i=2}^{\kappa} \sum_{j=2}^i \frac{\lambda_j \mu^j}{j! \sigma^j} \binom{i-1}{i-j} t^i,
\end{equation}
where $\lambda_2 = -1$, and $\{\lambda_j\}_{j \geq 3}$ are the coefficients of the Cram\'{e}r series corresponding to $Y = (X_1 - \mu)/\sigma$. This function can be obtained by expanding $(1-t)Q_\kappa(\mu \sigma^{-1} t/ (1-t))$ into powers of $t$; the details can be found in Lemma \ref{L.Lambda}. We also need to define $u(\rho)$ to be the smallest positive solution to $\Lambda_\rho'(t) = 0$. Some properties of $\Lambda_\rho$ and $u(\rho)$ are given in the following lemma. 

\begin{lem} \label{L.ustar}
Define $\Lambda_\rho$ according to \eqref{eq:LambdaDef} and let $u(\rho)$ be the smallest positive solution to $\Lambda_\rho'(t) = 0$. Then $\Lambda_\rho$ is concave in a neighborhood of the origin, 
$$u(\rho) = \sum_{n=1}^\infty \frac{b_n}{n!} (\log\rho)^n$$
and
$$\Lambda_\rho(u(\rho)) = \begin{cases}
\log\rho + \frac{\sigma^2}{2\mu^2}( \log \rho)^2, & \kappa = 2, \\
\log\rho + \frac{\sigma^2}{2\mu^2}( \log \rho)^2  + O (|\log\rho|^3), & \kappa > 2, 
\end{cases}$$
as $\rho \nearrow 1$, where $b_1 = - \frac{\sigma^2}{\mu^2}$ and for $n \geq 2$, 
\begin{align*}
b_n &= \frac{d^{n-1}}{d t^{n-1}} \left. \left( \frac{t}{P_\kappa(t)} \right)^n \right|_{t = 0} = \sum_{(m_1, \dots, m_{n-1}) \in \mathcal{A}_{n-1}} (n+s_{n-1}-1)!  (-1)^{n} \left( \frac{\sigma^2}{\mu^2} \right)^{n+s_{n-1}}  \prod_{j=1}^{n-1} \frac{1}{m_j!} (a_j \Indicator(j \leq \kappa-2))^{m_j},
\end{align*}
$\mathcal{A}_n = \{ (m_1, \dots, m_{n}) \in \mathbb{N}^{n} : \, 1m_1 + 2 m_2 + \dots + n m_n = n \}$, $s_n = m_1 + \dots + m_n$, and
$$P_\kappa(t) = \Lambda_\rho'(t) + \log\rho \triangleq t \sum_{j=0}^{\kappa-2}  a_j t^{j}.$$
\end{lem}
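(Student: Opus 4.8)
The plan is to treat $\Lambda_\rho'(t) = 0$ as a fixed-point/implicit-function problem in the small parameter $\log\rho$, and to extract the series for $u(\rho)$ by Lagrange inversion. First I would record from \eqref{eq:LambdaDef} that $\Lambda_\rho'(t) = -\log\rho + t\sum_{j=0}^{\kappa-2} a_j t^j = -\log\rho + P_\kappa(t)$, where the leading coefficient is $a_0 = \sum_{j=2}^{2}\lambda_j\mu^j/(j!\sigma^j)\binom{1}{0}\cdot 2 = \lambda_2\mu^2/\sigma^2 = -\mu^2/\sigma^2 \neq 0$ (using $\lambda_2=-1$); I would double-check the combinatorial coefficient from the $i=2$ term of \eqref{eq:LambdaDef} so that $P_\kappa(t) = -(\mu^2/\sigma^2)t + O(t^2)$. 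Since $\Lambda_\rho''(0) = P_\kappa'(0) = -\mu^2/\sigma^2 < 0$, $\Lambda_\rho$ is strictly concave in a neighbourhood of the origin, which is the first assertion. Moreover $P_\kappa$ is analytic with $P_\kappa(0)=0$, $P_\kappa'(0)\neq 0$, so $P_\kappa$ has an analytic inverse near $0$; the equation $\Lambda_\rho'(t)=0$ reads $P_\kappa(t) = \log\rho$, hence the smallest positive root is $u(\rho) = P_\kappa^{-1}(\log\rho)$, which is analytic in $\log\rho$ near $0$ and vanishes at $\log\rho = 0$. This already gives a convergent expansion $u(\rho) = \sum_{n\geq 1} (b_n/n!)(\log\rho)^n$ for $\rho$ near $1$.

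Next I would identify the coefficients $b_n$ via the Lagrange inversion formula: if $s = P_\kappa(t)$ with $P_\kappa(t) = t/\phi(t)$ where $\phi(t) = t/P_\kappa(t)$ is analytic and nonzero at $0$ (indeed $\phi(0) = 1/P_\kappa'(0) = -\sigma^2/\mu^2$), then $t = P_\kappa^{-1}(s) = \sum_{n\geq 1} \frac{s^n}{n!}\, \frac{d^{n-1}}{dt^{n-1}}\phi(t)^n\big|_{t=0}$. This is exactly the stated formula $b_n = \frac{d^{n-1}}{dt^{n-1}}(t/P_\kappa(t))^n\big|_{t=0}$, and $b_1 = \phi(0) = -\sigma^2/\mu^2$. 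To get the explicit closed form I would write $t/P_\kappa(t) = (\sum_{j=0}^{\kappa-2} a_j t^j)^{-1} = (a_0)^{-1}(1 + \sum_{j\geq 1} (a_j/a_0) t^j)^{-1}$, use the generalized binomial/multinomial expansion of $(1+w)^{-n}$ with $w = \sum_{j\geq 1}(a_j/a_0)t^j$, extract the coefficient of $t^{n-1}$ by summing over compositions $(m_1,\dots,m_{n-1}) \in \mathcal{A}_{n-1}$, and keep track of the factor $(n+s_{n-1}-1)!$ coming from $(1+w)^{-n}$'s Taylor coefficients together with the factor $a_0^{-(n+s_{n-1})} = (-\sigma^2/\mu^2)^{n+s_{n-1}}$ from pulling out $a_0$; the indicator $\Indicator(j\leq\kappa-2)$ accounts for $a_j$ being absent (zero) when $j > \kappa-2$. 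This is bookkeeping, so I would only display the final identity and cite the multinomial theorem.

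Finally I would compute $\Lambda_\rho(u(\rho))$. Write $\Lambda_\rho(t) = \Lambda_\rho(t) - 0$ and integrate: since $\Lambda_\rho'(t) = P_\kappa(t) + \log\rho$ and $\Lambda_\rho(0) = 0$, we have $\Lambda_\rho(u) = u\log\rho + \int_0^u P_\kappa(t)\,dt$. With $u = u(\rho) = b_1\log\rho + O(|\log\rho|^2)$ and $\int_0^u P_\kappa(t)\,dt = \tfrac12 P_\kappa'(0) u^2 + O(u^3) = -\tfrac{\mu^2}{2\sigma^2}u^2 + O(u^3)$, substitute $u = -\tfrac{\sigma^2}{\mu^2}\log\rho + O(|\log\rho|^2)$ to obtain $\Lambda_\rho(u(\rho)) = -\tfrac{\sigma^2}{\mu^2}(\log\rho)^2 + \tfrac{\sigma^2}{2\mu^2}(\log\rho)^2 + O(|\log\rho|^3) = \log\rho\cdot 0 + \tfrac{\sigma^2}{2\mu^2}(\log\rho)^2 + O(|\log\rho|^3)$ — wait, I must be careful: the $u\log\rho$ term contributes $-\tfrac{\sigma^2}{\mu^2}(\log\rho)^2$ and is itself $O(|\log\rho|^2)$, not $\log\rho$; reconciling this with the claimed leading term $\log\rho$ means the stated expansion must be $\Lambda_\rho(u(\rho)) = \tfrac{\sigma^2}{2\mu^2}(\log\rho)^2 + O(|\log\rho|^3)$ unless there is an additional convention (e.g. $\Lambda_\rho$ evaluated includes the $(1-t)\log\rho$ term at $t=u$, giving $(1-u)\log\rho + \dots$, and the ``$\log\rho$'' in the displayed answer is precisely this $(1-u(\rho))\log\rho$ contribution expanded as $\log\rho - u(\rho)\log\rho$). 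Indeed $\Lambda_\rho(u) = (1-u)\log\rho + [Q_\kappa\text{-part at } u]$, and $(1-u)\log\rho = \log\rho - b_1(\log\rho)^2 + O(|\log\rho|^3) = \log\rho + \tfrac{\sigma^2}{\mu^2}(\log\rho)^2 + O(|\log\rho|^3)$; adding the $Q_\kappa$-part, whose quadratic coefficient I would compute from \eqref{eq:LambdaDef} to be $-\tfrac{\sigma^2}{2\mu^2}$ (the $i=2$ term evaluated at $t=u$, i.e. $\lambda_2\mu^2/(2\sigma^2)\cdot b_1^2(\log\rho)^2$), yields $\log\rho + \tfrac{\sigma^2}{2\mu^2}(\log\rho)^2 + O(|\log\rho|^3)$, matching the statement; for $\kappa = 2$ there is no $O(|\log\rho|^3)$ term because $u(\rho) = b_1\log\rho$ exactly and $\Lambda_\rho$ is an exact quadratic. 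The main obstacle is purely organizational: keeping the indexing in \eqref{eq:LambdaDef} straight so that the $a_j$'s, the $b_n$'s, and the quadratic coefficient of $\Lambda_\rho$ all come out with the signs and factorials as stated — the analysis (concavity, analytic inverse, Lagrange inversion, Taylor expansion of the composition) is standard.
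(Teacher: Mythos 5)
Your proposal is correct and follows essentially the same route as the paper: concavity from $\Lambda_\rho''(0) = -\mu^2/\sigma^2 < 0$, Lagrange inversion applied to $P_\kappa(t)=\log\rho$ to obtain $b_n = \frac{d^{n-1}}{dt^{n-1}}(t/P_\kappa(t))^n\big|_{t=0}$, Fa\`a di Bruno (or equivalently the multinomial expansion you describe) to unpack that derivative, and a low-order Taylor expansion of $\Lambda_\rho$ at $u(\rho)$ for the final asymptotic. The only blemish is the transient slip $\Lambda_\rho(0)=0$ and $\Lambda_\rho'(t)=P_\kappa(t)+\log\rho$ (in fact $\Lambda_\rho(0)=\log\rho$ and $\Lambda_\rho'(t)=P_\kappa(t)-\log\rho$), which you catch and correctly reconcile before the end — the paper sidesteps this by expanding $\Lambda_\rho(t)=(1-t)\log\rho - \tfrac{\mu^2}{2\sigma^2}t^2 + O(t^3)$ directly and substituting $u(\rho)$ rather than integrating $\Lambda_\rho'$.
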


The second approximation for $P(W_\infty(\rho) > x)$ that we propose is 
\begin{equation} \label{eq:A_Def}
A_\kappa(\rho,x) = \sum_{n=1}^{K_r(x)} (1-\rho) \rho^n n \overline{F}(x-n\mu) + e^{\frac{x}{\mu} \Lambda_\rho (w(\rho,x))} ,
\end{equation}
where $w(\rho,x) = \min\{ u(\rho), \omega_1^{-1}(x)/x\}$. The precise statement of our result is given below.

\begin{thm} \label{T.Main}
Suppose Assumption \ref{A.Hazard} is satisfied, and define $A_\kappa(\rho,x)$ according to \eqref{eq:A_Def}. Then, 
$$\lim_{x \to \infty} \sup_{0 < \rho < 1} \left| \frac{P(W_\infty(\rho) > x)}{A_\kappa(\rho,x) } - 1 \right|  = 0.$$
Moreover, 
$$\lim_{\rho \nearrow 1} \sup_{x > 0} \left| \frac{P(W_\infty(\rho) > x)}{A_\kappa(\rho,x) } - 1 \right|  = 0.$$
\end{thm}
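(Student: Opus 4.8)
\emph{Proof plan.} The plan is to deduce the first assertion from Theorem \ref{T.SumApprox} by comparing $Z_\kappa$ with $A_\kappa$, and to obtain the second by combining the first with a direct heavy-traffic estimate valid for small $x$. For the first assertion, since $\frac{P(W_\infty(\rho)>x)}{A_\kappa(\rho,x)}=\frac{P(W_\infty(\rho)>x)}{Z_\kappa(\rho,x)}\cdot\frac{Z_\kappa(\rho,x)}{A_\kappa(\rho,x)}$ and the first factor tends to $1$ uniformly in $\rho$ by Theorem \ref{T.SumApprox}, it suffices to prove $\lim_{x\to\infty}\sup_{0<\rho<1}|Z_\kappa(\rho,x)/A_\kappa(\rho,x)-1|=0$. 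Write $Z_\kappa=I+J_Z$ and $A_\kappa=I+J_A$, where $I(\rho,x)=\sum_{n=1}^{K_r(x)}(1-\rho)\rho^n n\overline F(x-n\mu)$ is the term common to both, $J_A(\rho,x)=e^{\frac x\mu\Lambda_\rho(w(\rho,x))}$, and $J_Z$ is the remainder of $Z_\kappa$ (the term $E[\rho^{a(x,Z)}\Indicator(\cdot)]$ when $\kappa=2$, together with the Cram\'er sum when $\kappa>2$). Then $|Z_\kappa/A_\kappa-1|=|J_Z-J_A|/(I+J_A)\le\min\{|J_Z/J_A-1|,\ (J_Z+J_A)/I\}$, so it is enough, for each $\varepsilon>0$ and all large $x$, to split $(0,1)=R_{\mathrm{htr}}(x)\cup R_{\mathrm{ht}}(x)$ in such a way that $J_Z\sim J_A$ uniformly on $R_{\mathrm{htr}}(x)$ and $(J_Z+J_A)/I<\varepsilon$ uniformly on $R_{\mathrm{ht}}(x)$. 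A convenient choice is $R_{\mathrm{htr}}(x)=\{\rho:u(\rho)<\tfrac12\omega_1^{-1}(x)/x\}$ and its complement $R_{\mathrm{ht}}(x)$.

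On $R_{\mathrm{htr}}(x)$ one has $w(\rho,x)=u(\rho)$. When $\kappa=2$, the closed-form expression for $E[\rho^{a(x,Z)}\Indicator(\cdot)]$ recorded in the remark following Theorem \ref{T.SumApprox}, together with Lemma \ref{L.ustar} (giving $u(\rho)=-\tfrac{\sigma^2}{\mu^2}\log\rho$ and $\Lambda_\rho(u(\rho))=\log\rho+\tfrac{\sigma^2}{2\mu^2}(\log\rho)^2$), yields $J_Z=e^{\frac x\mu\Lambda_\rho(u(\rho))}\,\Phi\!\big(\tfrac{\sqrt\mu}{\sigma\sqrt x}(\omega_1^{-1}(x)-x\,u(\rho))\big)=J_A\cdot\Phi(\cdot)$; since $\omega_1^{-1}(x)-x\,u(\rho)>\tfrac12\omega_1^{-1}(x)\ge\tfrac12\sqrt{(\beta/2)x\log x}\to\infty$ on $R_{\mathrm{htr}}(x)$, the $\Phi$-factor tends to $1$ uniformly and $J_Z\sim J_A$. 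When $\kappa>2$ I would write the Cram\'er sum with generic summand $(1-\rho)\rho^n e^{nQ_\kappa((x-n\mu)/(\sigma n))}$, substitute $n\mu=x(1-t)$ so that the exponent becomes $\tfrac x\mu\Lambda_\rho(t)$ up to a remainder that is negligible because the relevant $t$ is $O(1/\sqrt x)$ (this reparametrization is the content of Lemma \ref{L.Lambda}), and carry out a Laplace evaluation of the sum. By Lemma \ref{L.ustar}, $\Lambda_\rho$ is concave near the origin with maximizer $u(\rho)$, so the sum concentrates around $n^*\approx x(1-u(\rho))/\mu$ and is asymptotic to $e^{\frac x\mu\Lambda_\rho(u(\rho))}$ times polynomial and Gaussian factors which are exactly cancelled by those carried along in $E[\rho^{a(x,Z)}\Indicator(\cdot)]$; assembling the pieces gives $J_Z\sim J_A$ uniformly on $R_{\mathrm{htr}}(x)$ again.

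On $R_{\mathrm{ht}}(x)=\{\rho:u(\rho)\ge\tfrac12\omega_1^{-1}(x)/x\}$ I would show $I$ dominates $J_Z\vee J_A$. From $u(\rho)=-\tfrac{\sigma^2}{\mu^2}\log\rho+O((\log\rho)^2)$ one gets $1-\rho\gtrsim\omega_1^{-1}(x)/x$ there, hence $I\ge(1-\rho)\rho\,\overline F(x-\mu)\gtrsim(\omega_1^{-1}(x)/x)\,e^{-Q(x)}$ (and, for $\rho$ close to $1$, $I$ is comparable to $\overline F(x)/(1-\rho)$, which is even better). For the upper bounds, the very purpose of the $\min$ in $w(\rho,x)$ is to keep $\Lambda_\rho(w(\rho,x))\le\log\rho+o(\omega_1^{-1}(x)/x)$, whence $J_A\le e^{-c\,\omega_1^{-1}(x)}$ for some $c>0$, and likewise $J_Z\le e^{-c'\omega_1^{-1}(x)}$ for some $c'>0$. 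The crucial analytic input is $\omega_1^{-1}(x)/Q(x)\to\infty$: combining $\omega_1(\omega_1^{-1}(x))=x$ (Lemma \ref{L.RightInverse}) with the monotonicity of $Q(t)/t^{r+\delta}$ from Proposition 3.7 of \cite{Bal_Dal_Klupp_04} gives $Q(x)\le(\omega_1^{-1}(x)/x)^{1-r-\delta}\omega_1^{-1}(x)=o(\omega_1^{-1}(x))$, while subexponentiality ($Q(x)=o(x)$) handles $\rho$ bounded away from $1$. Thus $e^{-Q(x)}$ is, on the scale $\omega_1^{-1}(x)$, exponentially larger than $J_Z\vee J_A$, so $(J_Z+J_A)/I\to0$ uniformly on $R_{\mathrm{ht}}(x)$, which completes the first assertion.

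For the second assertion, fix $\hat x(\rho)\to\infty$ as $\rho\nearrow1$, growing slowly (its rate pinned down at the end). For $x\ge\hat x(\rho)$ the claim is immediate from the first assertion: given $\varepsilon$, pick $x_0$ with $\sup_{0<\rho<1}|P(W_\infty(\rho)>x)/A_\kappa(\rho,x)-1|<\varepsilon$ for $x\ge x_0$, then restrict to $\rho$ close enough to $1$ that $\hat x(\rho)\ge x_0$. For $0<x<\hat x(\rho)$ one is in the heavy-traffic regime: on $(0,\hat x(\rho)]$ the ratio $\omega_1^{-1}(x)/x$ stays bounded below by a positive constant $\eta(\hat x(\rho))$ while $u(\rho)\to0$, so — choosing the growth of $\hat x(\rho)$ so that also $u(\rho)<\eta(\hat x(\rho))$ and $I(\rho,x)\le(1-\rho)(x/\mu)^2\overline F(\omega_2^{-1}(x))=o(1)$ uniformly on this range — we get $w(\rho,x)=u(\rho)$ and $A_\kappa(\rho,x)\sim e^{\frac x\mu\Lambda_\rho(u(\rho))}$; it then remains to prove $\sup_{0<x<\hat x(\rho)}|P(W_\infty(\rho)>x)/e^{\frac x\mu\Lambda_\rho(u(\rho))}-1|\to0$. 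For this I would return to the Pollaczek--Khintchine sum $P(W_\infty(\rho)>x)=\sum_n(1-\rho)\rho^nP(S_n>x)$: on this range the pairs $(x,n)$ carrying the mass (those with $n\approx x/\mu$) lie in the CLT/Cram\'er domain rather than the big-jump domain, so $P(S_n>x)$ is given by \eqref{eq:CramerAsym}, and the same Laplace evaluation, with the maximizer $u(\rho)$ of $\Lambda_\rho$ supplied by Lemma \ref{L.ustar}, produces $e^{\frac x\mu\Lambda_\rho(u(\rho))}$ uniformly, the remaining $(x,n)$ contributing negligibly by a crude tail bound; as $x\downarrow0$ both sides tend to $1$. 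I expect the main obstacle to be twofold: (a) the $\kappa>2$ Laplace step, i.e., verifying that every polynomial and Gaussian prefactor cancels so that $J_Z\sim J_A$ holds uniformly and not merely to exponential order; and (b) controlling uniformity across the boundary of $R_{\mathrm{htr}}(x)$ and $R_{\mathrm{ht}}(x)$, where the $\Phi$-factor would otherwise degrade to $\tfrac12$ — the built-in margin and the estimate $\omega_1^{-1}(x)/Q(x)\to\infty$ are precisely what force $I$ to take over before that becomes an issue.
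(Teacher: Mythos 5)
Your plan for the \emph{first} assertion is essentially the paper's proof, modulo a cosmetic reparametrization of the region split. The paper reduces to showing $Z_\kappa/A_\kappa\to1$ uniformly (just as you do), then splits at $\hat\rho(x)=e^{-2\mu Q(x)/x}$, and within the small-$\rho$ side further splits at $\bar\rho(x)=e^{-\zeta\omega_1^{-1}(x)/x}$, so effectively there are three zones: ($\rho$ very close to $1$) where Lemmas \ref{L.Lambda} and \ref{L.Watson} give $J_Z\sim J_A$; (an intermediate zone) where both $J_Z$ and $J_A$ are shown negligible relative to $Z_\kappa\geq C\rho(1-\rho)^{-1}e^{-Q(x)}$ (Lemma \ref{L.LowerBound}); and ($\rho$ small) where again $J_Z\vee J_A\lesssim e^{-c\omega_1^{-1}(x)}$ wins. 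Your two-region split at $u(\rho)\lessgtr\tfrac12\omega_1^{-1}(x)/x$ puts the intermediate zone into $R_{\mathrm{htr}}$ and proposes to show $J_Z\sim J_A$ there; for $\kappa=2$ your closed-form computation does prove this cleanly on all of $R_{\mathrm{htr}}$, and for $\kappa>2$ the Laplace argument of Lemma \ref{L.Watson} can be extended from $u(\rho)\lesssim Q(x)/x$ to $u(\rho)\leq\tfrac12\omega_1^{-1}(x)/x$ because the maximizer stays a fixed ratio away from the endpoint $d(x)=\omega_1^{-1}(x)/x$. That extension is extra work the paper avoids by being satisfied with a crude bound on the intermediate zone, but it is not wrong. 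Two small gaps in your $R_{\mathrm{ht}}$ estimate: (i) your lower bound $I\gtrsim(\omega_1^{-1}(x)/x)e^{-Q(x)}$ drops a factor of $\rho$ and so degenerates as $\rho\to0$ (the paper's Lemma \ref{L.LowerBound} gives the $\rho$-uniform bound $I\geq C\rho(1-\rho)^{-1}e^{-Q(x)}$, which is what is needed); and (ii) the bound $J_A\leq e^{-c\omega_1^{-1}(x)}$ requires a short argument controlling the higher-order terms of $\Lambda_\rho$ (as in the last display of the paper's Case 2).

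The \emph{second} assertion is where your route genuinely diverges from the paper's, and where I see the real issue. The paper does not re-analyze the Pollaczek--Khintchine sum at all: it simply establishes $\sup_{0<x<(1-\rho)^{-1/4}}|A_\kappa(\rho,x)-1|\to0$ as $\rho\nearrow1$ by the elementary computation
$$|A_\kappa(\rho,x)-1|\leq C(1-\rho)K_r(x)^2+\bigl|e^{\tfrac x\mu\Lambda_\rho(w(\rho,x))}-1\bigr|\leq C(1-\rho)^{1/2}+C(1-\rho)^{-1/4}|\log\rho|,$$
and then invokes Lemma 3.3 of \cite{OlBlGl_10}, which is precisely a gluing lemma saying: if the uniform-in-$\rho$ statement holds for $x\geq\hat x(\rho)$ and the approximation tends to $1$ uniformly for $x\leq\hat x(\rho)$, then the uniform-in-$x$ statement holds. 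Your alternative --- proving $\sup_{0<x<\hat x(\rho)}|P(W_\infty(\rho)>x)/e^{\tfrac x\mu\Lambda_\rho(u(\rho))}-1|\to0$ by ``the same Laplace evaluation'' --- is much heavier than necessary, and the heuristic behind it is off. For $x$ bounded and $\rho\nearrow1$, the geometric weights $(1-\rho)\rho^n$ put their mass on $n\asymp(1-\rho)^{-1}\gg x/\mu$, not on $n\approx x/\mu$; in that range $P(S_n>x)\approx1$ and there is no Laplace concentration at the Cram\'er maximizer. The correct observation (and the one the paper exploits) is that both sides trivially tend to $1$: $P(W_\infty(\rho)>x)\to1$ because $W_\infty(\rho)\Rightarrow\infty$ as $\rho\nearrow1$, and $A_\kappa(\rho,x)\to1$ by the display above. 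If you want a self-contained argument avoiding \cite{OlBlGl_10}, you would still want to organize it around this pair of convergences to $1$ rather than around Laplace's method, and you would need to supply the uniformity of $P(W_\infty(\rho)>x)\to1$ for $x\leq\hat x(\rho)$, e.g.\ by a Markov or Chebyshev bound on $W_\infty(\rho)$, which is again where Lemma 3.3 of \cite{OlBlGl_10} earns its keep.
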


\begin{remark} \label{R.MainRemarks}
(i) As mentioned earlier, the difference between $Z_\kappa(\rho,x)$ and $A_\kappa(\rho,x)$ is in the terms that correspond to the behavior of the queue when the effects of the heavy traffic dominate those of the heavy tails. In particular, what prevents $Z_\kappa(\rho,x)$ from being uniformly good for all values of $x$ as $\rho \nearrow 1$ is that if $x$ is bounded, then the second term in \eqref{eq:Z_Def_2} and the second and third terms in \eqref{eq:Z_Def_big2} do not converge to one when $\rho \nearrow 1$, which can be fixed by substituting them by their asymptotic expression as $\rho \nearrow 1$; evaluating $\Lambda_\rho$ at the value $w(\rho,x) = \min\{ u(\rho), \omega_1^{-1}(x)/x\}$ guarantees that the contribution of $e^{\frac{x}{\mu} \Lambda_\rho(w(\rho,x))}$ becomes negligible when the queue is in the heavy tail regime.  (ii) For analytical applications, Lemma \ref{L.ustar} states that $\Lambda_\rho(u(\rho))$ can be written as a power series in $\log\rho$ whose terms of order greater than $\kappa$ can be ignored. For numerical implementations, nonetheless, it might be easier to compute $u(\rho)$ by directly optimizing $\Lambda_\rho(t)$, since $\Lambda_\rho(t)$ is just a polynomial of order $\kappa$. (iii) By simply matching the leading exponents of the heavy tail asymptotic and the function $\frac{x}{\mu} \Lambda_\rho(u(\rho))$, that is, by solving the equation
$$\frac{x}{\mu} \log \rho = - Q(x),$$
we obtain that the heavy tail region is roughly $\mathcal{R}_1 = \{ (x,\rho): \rho < e^{-\mu Q(x)/x} \}$, whereas on $\mathcal{R}_2 = \{ (x,\rho): \rho > e^{-\mu Q(x)/x}\}$ one should use $e^{\frac{x}{\mu} \Lambda_\rho(u(\rho))}$ to approximate $P(W_\infty(\rho) > x)$. It follows that the heavy traffic region is given by the subset of $\mathcal{R}_2$ where $e^{\frac{x}{\mu} \Lambda_\rho(u(\rho))}$ is asymptotically equivalent to $e^{-\frac{x}{\mu} (1-\rho)}$, the heavy traffic approximation for the M/G/1 queue. We note that when $\kappa =2$, the heavy traffic region is the entire $\mathcal{R}_2$, but it is a strict subset of $\mathcal{R}_2$ if $\kappa > 2$, in which case a third region of asymptotics arises where neither the heavy traffic nor the heavy tail approximations are valid.  (iv) As mentioned before, the coefficients of $\Lambda_\rho(t)$ can be easily obtained from the first $\kappa-2$ coefficients of the Cram\'{e}r series of $Y = (X_1 -\mu)/\sigma$, which in turn can be obtained from the cumulants of $Y$. 
\end{remark}

We end this section with a formula that can be used to compute the coefficients of the Cram\'{e}r series.

\subsection{Cram\'{e}r Coefficients}

The following formula taken from \cite{Roz_99} can be used to recursively compute the coefficients in the Cram\'{e}r series, and we include it only for completeness. 

\begin{prop}
Let $Y$ be a random variable having $EY = 0$, $\var(Y) = 1$, and cumulants $\gamma_1, \gamma_2, \dots$. Let $\lambda_3, \lambda_4, \dots$ be the coefficients of the (formal) Cram\'{e}r series of $Y$, i.e., $\lambda(t) = \sum_{j=3}^\infty \lambda_j t^j / j!$. Let $\mathcal{A}_j = \{ (n_1, \dots, n_{j}) \in \mathbb{N}^{j} : \, 1n_1 + 2 n_2 + \dots + j n_j = j \}$. Then, for $j \geq 3$ and $s_{j-2} = n_1 + \dots + n_{j-2}$,
\begin{align*}
\lambda_j &= \sum_{(n_1,\dots, n_{j-2}) \in \mathcal{A}_{j-2}} (j + s_{j-2} -2)! (-1)^{s+1} \prod_{m=1}^{j-2} \frac{1}{n_m!} \left( \frac{\gamma_{m+2}}{(m+1)!} \right)^{n_m},
\end{align*}
 
\end{prop}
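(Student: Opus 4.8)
The plan is to treat the Cram\'er series as a formal power series and to reduce the identity to one application of Lagrange's inversion formula followed by the generalized binomial and multinomial theorems. Write $\psi(t)=\sum_{k\ge 2}\gamma_k t^k/k!$ for the (formal) cumulant generating function of $Y$, which is legitimate since $\gamma_1=EY=0$ and $\gamma_2=\var(Y)=1$, and set $h=\psi'$, so that
$$
h(t)=t+\sum_{m\ge 1}\frac{\gamma_{m+2}}{(m+1)!}\,t^{m+1}=t\,H(t),\qquad
H(t)=1+\sum_{m\ge 1}\frac{\gamma_{m+2}}{(m+1)!}\,t^{m}.
$$
Let $\Lambda$ be the (formal) Legendre transform $\Lambda(x)=x\,h^{-1}(x)-\psi(h^{-1}(x))$, where $h^{-1}$ denotes the compositional inverse of $h$, which exists because $h(t)=t+O(t^2)$. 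Differentiating gives $\Lambda'(x)=h^{-1}(x)$ and $\Lambda(0)=0$, hence $\Lambda(x)=\int_0^x h^{-1}(u)\,du$. The first step is to recall, from the definition of the Cram\'er series used here (see \cite{Petrov1975}, \cite{Nagaev_65}, \cite{Roz_99}), that $\lambda_j=-\,j!\,[t^j]\Lambda(t)$ for $j\ge 2$, where $[t^k]g$ denotes the coefficient of $t^k$ in a formal series $g$; in particular this is consistent with $\lambda_2=-1$, and it shows that computing the Taylor coefficients of $\Lambda$ is equivalent to the claim.

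The second step is Lagrange inversion. Since $x=h(t)=t/\phi(t)$ with $\phi(t)=1/H(t)$ and $\phi(0)=1$, Lagrange's formula gives $[x^n]\,h^{-1}(x)=\frac1n[t^{\,n-1}]H(t)^{-n}$, and therefore
$$
[x^j]\Lambda(x)=\frac1j\,[x^{\,j-1}]h^{-1}(x)=\frac{1}{j(j-1)}\,[t^{\,j-2}]H(t)^{-(j-1)}.
$$
It remains to expand $H(t)^{-(j-1)}$. Writing $H(t)-1=\sum_{m\ge 1}b_m t^m$ with $b_m=\gamma_{m+2}/(m+1)!$ and combining the generalized binomial theorem with the multinomial theorem yields
$$
[t^{\,j-2}]H(t)^{-(j-1)}
=\sum_{(n_1,\dots,n_{j-2})\in\mathcal{A}_{j-2}}\binom{-(j-1)}{s}\frac{s!}{\prod_m n_m!}\prod_{m=1}^{j-2}b_m^{\,n_m},\qquad s=n_1+\dots+n_{j-2},
$$
and the elementary identity $\binom{-(j-1)}{s}\,s!=(-1)^s(j-1)j\cdots(j+s-2)=(-1)^s(j+s-2)!/(j-2)!$ converts the right-hand side into $\frac{1}{(j-2)!}\sum_{\mathcal{A}_{j-2}}(-1)^s(j+s-2)!\prod_m b_m^{n_m}/n_m!$. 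Substituting back gives $[x^j]\Lambda(x)=\frac{1}{j!}\sum_{\mathcal{A}_{j-2}}(-1)^s(j+s-2)!\prod_m b_m^{n_m}/n_m!$; multiplying by $-j!$ and recalling $b_m=\gamma_{m+2}/(m+1)!$ produces precisely the asserted formula for $\lambda_j$.

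All of the manipulations in the second and third steps (Lagrange inversion, the binomial/multinomial bookkeeping, the closed form for $\binom{-(j-1)}{s}s!$) are routine formal-power-series computations. The one genuinely delicate point is the first step: one must fix the precise convention for the Cram\'er series and verify that $\lambda_j=-\,j!\,[t^j]\Lambda(t)$, since several slightly different normalizations of $\lambda(t)$ appear in the literature and only the one consistent with \eqref{eq:CramerAsym} and \eqref{eq:Q_kappa} (with $\lambda_2=-1$) makes the statement correct. Once that normalization is in place, the rest of the argument is purely algebraic, which is why the proposition is included only for completeness.
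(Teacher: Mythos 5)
Your proof is correct, and in fact the paper gives no proof at all for this proposition---it is taken verbatim from Rozovsky (1999) and explicitly included ``only for completeness''---so you have supplied an argument where none existed in the text. The Lagrange-inversion route you take is the natural one: with $\psi(t)=\sum_{k\ge 2}\gamma_k t^k/k!$, $h=\psi'$, and $\Lambda$ the formal Legendre transform, you correctly use $\Lambda'=h^{-1}$, Lagrange inversion to get $[x^j]\Lambda=\tfrac{1}{j(j-1)}[t^{j-2}]H(t)^{-(j-1)}$, and then the binomial/multinomial expansion with the identity $\binom{-(j-1)}{s}s!=(-1)^s(j+s-2)!/(j-2)!$; I checked that the resulting formula reproduces $\lambda_3,\dots,\lambda_6$ as listed in the paper. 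The one place you assert rather than establish is the normalization $\lambda_j=-\,j!\,[t^j]\Lambda(t)$, which you correctly flag as the delicate point: Petrov's Cram\'er series $\lambda^{P}(u)=\sum_{k\ge 0}a_k u^k$ is normalized via $\Lambda(u)=u^2/2-u^3\lambda^{P}(u)$, so it carries neither the factorials nor the leading $u^3$. The bridge is that the paper's $\lambda(t)=\sum_{j\ge 3}\lambda_j t^j/j!$ equals $t^3\lambda^{P}(t)$, which together with $Q_\kappa(t)=-t^2/2+\sum_{j\ge3}\lambda_j t^j/j!$ and \eqref{eq:CramerAsym} forces $\lambda_j=-\,j!\,[t^j]\Lambda(t)$ and $\lambda_2=-1$. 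Spelling out that one-line calculation would make the proof fully self-contained; everything else is sound.
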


The first four coefficients are given by
\begin{align*}
\lambda_3 &=  \gamma_3, \qquad \lambda_4 = \gamma_4 - 3\gamma_3^2, \qquad \lambda_5 = \gamma_5 - 10 \gamma_4 \gamma_3 + 15 \gamma_3^3, \\
\lambda_6 &= \gamma_6 - 15\gamma_5 \gamma_3 - 10 \gamma_4^2 + 105 \gamma_4 \gamma_3^2 - 105 \gamma_3^4
\end{align*}

The rest of the paper consists mostly of the proofs of all the results in Section \ref{S.MainResults} and is organized as follows.  Section~\ref{S.UniformRW} states an approximation for $P(S_n > x)$ that is valid for all pairs $(x,n)$ and that will be used to derive uniform asymptotics for $P(W_\infty(\rho) > x)$.  Section \ref{S.SumApproxProof} contains the proof of Theorem \ref{T.SumApprox}; and Section \ref{S.MainProof} contains the proofs of Lemma \ref{L.ustar} and Theorem \ref{T.Main}.  We conclude the paper by giving a couple of numerical examples comparing the two suggested approximations for the tail distribution of $W_\infty(\rho)$, $Z_\kappa(\rho,x)$ and $A_\kappa(\rho,x)$, in Section~\ref{S.Numerical}. A table of notation is included at the end of the paper. 

\bigskip

%%%% Section asymptotics for the RW
\section{Uniform asymptotics for $P(S_n > x)$} \label{S.UniformRW} 

In this section we will state the uniform approximation for $P(S_n > x)$ that we will substitute in the Pollaczek-Khintchine formula \eqref{eq:Poll-Khin} outside of the heavy-tail region. This approximation was derived in \cite{Roz_93} for mean zero and unit variance random walks and it works on the whole positive line as $n \to \infty$.  Although rather complicated as an approximation for $P(S_n > x)$, it will be useful in the derivation of simpler expressions for the queue with the level of generality that we described in Section \ref{S.ModelDescription}. For the heavy-tail region (small values of $n$) we will use in section \ref{SS.FirstApprox} a result from \cite{Bal_Klupp_04} to prove that $P(S_n > x) = n \overline{F}(x-n\mu) (1+o(1))$ as $x \to \infty$ uniformly in the region $1 \leq n \leq K_r(x)$. 

We start by stating the assumptions needed for the mean zero and unit variance random walk, and after giving the approximation in this setting we will show that under Assumption \ref{A.Hazard}, the random variable $Y_1 = (X_1 - \mu)/\sigma$ satisfies these conditions. Then we will apply a slightly modified version of the approximation to the positive mean case and we will show that it holds uniformly in the region $n \geq K_r(x)$.

The notation $f(t) \asymp g(t)$ as $t \to \infty$ means $0 < \liminf_{t \to \infty} f(t)/g(t) \leq \limsup_{t \to \infty} f(t)/g(t) < \infty$. We will also use $C$ to denote a generic positive constant, i.e., $C = 2 C$, $C = C +1$, etc. 

\begin{assumption} \label{A.StdCase}
Let $Y$ be a random variable with $E[Y] = 0$, $\var(Y) = 1$ and tail distribution 
$$1- V(t) = \overline{V}(t) \asymp  \frac{D(t)}{t^2} \, e^{-\tilde Q(t)}, \qquad t \to \infty,$$
where $D(t) = \int_{|u|<t} t^2 dV(dt)$, $\tilde Q$ has Lebesgue density $\tilde q$, and satisfies
$$\limsup_{t \to \infty} \frac{t \tilde q(t)}{\tilde Q(t)} \triangleq \tilde r < 1 \qquad \text{and} \qquad  \liminf_{t \to \infty} \tilde Q(t)/\log t > \tilde r/(1-\tilde r).$$
Suppose further that $E[ |Y|^{\tilde \kappa+1}] < \infty$, where
$$\tilde \kappa = \max\left\{ l \in \{0, 1, 2,\dots\} : \limsup_{z \to \infty} \frac{\tilde Q(z)}{z^{l/(l+1)}} > 0 \right\} + 2.$$
\end{assumption}

Throughout this section let $Q_{\tilde \kappa}(t) = \sum_{j=2}^{\tilde\kappa} \lambda_j t^j / j!$, where $\lambda_2 = -1$ and $\{\lambda_j\}_{j \geq 3}$ are the coefficients of the Cram\'er series of $Y$, and let $\tilde{S}_n = Y_1 + \dots + Y_n$, where $\{Y_i\}$ are iid with common distribution $V(t)$. We also define the functions
\begin{equation} \label{eq:b_Def}
b(t) = t^2/(\tilde Q(t) \vee 1), \qquad \text{and} \qquad b^{-1}(t) = \inf\{ u \geq 0: t \leq b(u)\}.
\end{equation}

We start by proving some properties about the functions $\tilde Q$, and $b^{-1}$.

\begin{lem} \label{L.g_properties}
Suppose Assumption \ref{A.StdCase} holds. Then, for any $s \in (\tilde r, 1)$ there exists a constant $t_0 \geq 1$ such that 
\begin{enumerate}
\item $\tilde Q(t)/ t^{s}$ is decreasing for all $t \geq t_0$, 
\item $b^{-1}(t) \leq t^{1/(2-s)}$ for all $t \geq t_0$,
\item $b^{-1}(ct) \leq c^{1/(2-s)} b^{-1}(t) \leq c b^{-1}(t)$ for all $t \geq t_0$ and any $c \geq 1$,
\item $b^{-1}(c t) \geq c b^{-1}(t)$ for all $t \geq t_0$ and any $c \leq 1$,
\end{enumerate}
Also, the following limit holds
\begin{enumerate} \setcounter{enumi}{4}
\item $\lim_{t \to \infty}  e^{-\tilde Q(t/ b^{-1}(t))} \tilde Q(b^{-1}(t)) = 0$, 
\end{enumerate}
\end{lem}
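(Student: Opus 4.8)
The plan is to derive all five assertions from a single Karamata-type estimate, namely (a.), which I would prove first. Fix $s\in(\tilde r,1)$. Because $\liminf_{t\to\infty}\tilde Q(t)/\log t>\tilde r/(1-\tilde r)\ge0$, the (non-decreasing) function $\tilde Q$ tends to $\infty$, and since $\limsup_{t\to\infty} t\tilde q(t)/\tilde Q(t)=\tilde r<s$ there is $t_0\ge1$ with $\tilde Q(t)>0$ and $\tilde q(t)/\tilde Q(t)\le s/t$ for a.e.\ $t\ge t_0$. As $\tilde Q$ is absolutely continuous with density $\tilde q$, the map $t\mapsto\log\tilde Q(t)-s\log t$ is locally absolutely continuous on $[t_0,\infty)$ with a.e.\ nonpositive derivative, hence non-increasing; this is (a.) (it is essentially equation (3) of \cite{Roz_93} and Proposition~3.7 of \cite{Bal_Dal_Klupp_04}). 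In particular $\tilde Q(t)\le Ct^{s'}$ on $[t_0,\infty)$ for every $s'\in(\tilde r,1)$.

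Parts (b.)--(d.) are then elementary. For (b.), choose an auxiliary $s'\in(\tilde r,s)$; then $b(u)=u^2/(\tilde Q(u)\vee1)\ge u^{2-s'}/C'$ for $u$ large, and since $b$ is continuous with $b(0)=0$ and $b(u)\to\infty$, Lemma~\ref{L.RightInverse} gives $b(b^{-1}(t))=t$ and it suffices to check $b(t^{1/(2-s)})\ge t$ for $t$ large; this holds because $b(t^{1/(2-s)})\ge t^{(2-s')/(2-s)}/C'$ with $(2-s')/(2-s)>1$, so the constant is absorbed past some threshold. For (c.), (a.) yields $\tilde Q(cu)\le c^s\tilde Q(u)$ for $c\ge1$, $u\ge t_0$, hence $b(cu)\ge c^{2-s}b(u)$; taking $u=b^{-1}(t)$ and replacing $c$ by $c^{1/(2-s)}$ gives $b\big(c^{1/(2-s)}b^{-1}(t)\big)\ge c\,t$, so $b^{-1}(ct)\le c^{1/(2-s)}b^{-1}(t)\le c\,b^{-1}(t)$, the last inequality from $1/(2-s)\le1$, $c\ge1$. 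Part (d.) follows from (c.) by the substitution $t\mapsto t/c$ (valid when $ct\ge t_0$) together with $c^{1/(2-s)}\ge c$ for $c\le1$; the leftover range $ct<t_0$ is handled by the crude bounds $b(u)\le u^2$ and $b^{-1}(\tau)\ge\sqrt\tau$ near the origin, noting that by (b.) one has $c\,b^{-1}(t)/\sqrt{ct}\to0$ uniformly over such $c$ as $t\to\infty$. Taking $t_0$ to be the largest of the finitely many thresholds produced above makes (a.)--(d.) hold on one common half-line.

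For (e.) the key is again the identity $b(b^{-1}(t))=t$, which rearranges to $t/b^{-1}(t)=b^{-1}(t)/(\tilde Q(b^{-1}(t))\vee1)$. I would fix an exponent $s_1\in(\tilde r,1)$ — one is free to take $s_1$ as close to $\tilde r$ as desired, since (e.) mentions no $s$ — and use (a.) to write $\tilde Q(b^{-1}(t))\le C_1\big(b^{-1}(t)\big)^{s_1}$ for $t$ large; since $b^{-1}(t)\to\infty$ (by (b.)) and $\tilde Q\to\infty$, this gives $t/b^{-1}(t)\ge\big(b^{-1}(t)\big)^{1-s_1}/C_1\to\infty$. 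Picking $\ell'$ with $\tilde r/(1-\tilde r)<\ell'<\liminf_{w\to\infty}\tilde Q(w)/\log w$ so that $\tilde Q(w)\ge\ell'\log w$ for large $w$, and using that $\tilde Q$ is non-decreasing,
$$\tilde Q\!\left(\frac{t}{b^{-1}(t)}\right)-\log\tilde Q(b^{-1}(t))\ \ge\ \big((1-s_1)\ell'-s_1\big)\log b^{-1}(t)-C''.$$
If $s_1$ is taken close enough to $\tilde r$ then $s_1/(1-s_1)<\ell'$, so $(1-s_1)\ell'-s_1>0$ and the right side tends to $+\infty$; exponentiating gives $e^{-\tilde Q(t/b^{-1}(t))}\tilde Q(b^{-1}(t))\to0$, which is (e.).

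The main obstacle is (e.). The delicate point is that, although (a.)--(d.) hold for any fixed $s$, in (e.) the exponent $s_1$ must be chosen near $\tilde r$ so that the \emph{polynomial} upper bound $\tilde Q(b^{-1}(t))\lesssim\big(b^{-1}(t)\big)^{s_1}$ is beaten, after composing $\tilde Q$ with the argument $t/b^{-1}(t)\approx\big(b^{-1}(t)\big)^{1-s_1}$, by the \emph{logarithmic} lower bound $\tilde Q(w)\gtrsim\log w$; the hypothesis $\liminf_{t\to\infty}\tilde Q(t)/\log t>\tilde r/(1-\tilde r)$ in Assumption~\ref{A.StdCase} is precisely the slack needed for this comparison to close. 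A secondary, purely bookkeeping issue is to carry enough room in (b.)--(d.) — auxiliary exponents strictly below $s$, a large common $t_0$ — so that the stray multiplicative constants never obstruct the stated inequalities.
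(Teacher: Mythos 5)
Your proof is correct and follows essentially the same route as the paper: part (a.) is the Karamata-type estimate of Proposition 3.7 in \cite{Bal_Dal_Klupp_04} (which you reprove from first principles), parts (b.)--(d.) are its algebraic consequences via the identity $b(b^{-1}(t))=t$ from Lemma \ref{L.RightInverse}, and part (e.) is the same logarithmic-versus-polynomial comparison exploiting $\liminf_{t\to\infty}\tilde Q(t)/\log t>\tilde r/(1-\tilde r)$. The only cosmetic differences are that you derive (d.) from (c.) by the substitution $t\mapsto t/c$ with a separate small-$ct$ case (the paper compares $c^{-1}b(x)$ and $b(c^{-1}x)$ directly) and that you present (e.) in the original variable $t$ rather than after the paper's change of variable $u=b^{-1}(t)$.
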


\begin{proof}
Part (a.) follows directly from Proposition 3.7 in \cite{Bal_Dal_Klupp_04}.  For part (b.) note that $\tilde Q(t)/ t^{s'}$ is eventually decreasing for any $\tilde r < s' < s$, so
$$\lim_{t \to \infty} \frac{\tilde Q(t)}{t^s} \leq \sup_{z \geq 1} \frac{\tilde Q(z)}{z^{s'}} \lim_{t \to \infty} \frac{1}{t^{s-s'}} = 0.$$
It follows that $\tilde Q(t) \leq t^s$ for all $t \geq t_0$ for some $t_0 > 0$. This in turn implies that $b(t) \geq t^{2-s}$ for all $t \geq t_0$, and therefore, $b^{-1}(t) \leq t^{1/(2-s)}$. 

For part (c.) note that Proposition 3.7 in \cite{Bal_Dal_Klupp_04} gives $\tilde Q(c b^{-1}(t)) \leq c^{s} \tilde Q(b^{-1}(t))$ for any $c \geq 1$ and all sufficiently large $t$, then
$$b(c^{1/(2-s)} b^{-1}(t)) = \frac{c^{2/(2-s)} (b^{-1}(t))^2}{\tilde Q(c^{1/(2-s)} b^{-1}(t))} \geq \frac{c (b^{-1}(t))^2}{ \tilde Q(b^{-1}(t))} = cb(b^{-1}(t)) = c t = b(b^{-1}(c t)).$$
It follows from noting that $b(t)$ is strictly increasing for large enough $t$, that
$$c^{1/(2-s)} b^{-1}(t) \geq b^{-1}(c t).$$

For part (d.) let $c \leq 1$ and define $u(x) = c^{-1} b(x)$, $v(x) = b(c^{-1}x)$. By Proposition 3.7 in \cite{Bal_Dal_Klupp_04}, $\tilde Q(x) \geq c^{s} \tilde Q(c^{-1} x)$, from where we obtain that
$$u(x) = \frac{c^{-1} x^2}{\tilde Q(x) \vee 1}  \leq \frac{c (c^{-1} x)^2}{c^{s} \tilde Q(c^{-1} x) \vee 1} = \frac{(c^{-1} x)^2}{c^{-1 + s} \tilde Q(c^{-1} x) \vee c^{-1}} \leq v(x).$$
It follows that $u^{-1}(x) \geq v^{-1}(x)$, where $u^{-1}(x) = \inf\{ t \geq 0: c x \leq b(t)\} = b^{-1}(cx)$ and 
$v^{-1}(x) = \inf \left\{ t \geq 0: x \leq b(c^{-1}t)\right\} = c \inf\{ t \geq 0: x \leq b(t)\} = c b^{-1}(x)$.

\newpage
For part (e.) let $\nu = \liminf_{t \to \infty} \tilde Q(t)/\log t > \tilde r/(1-\tilde r)$ and note that
\begin{align*}
\lim_{t \to \infty}  e^{-\tilde Q(t/ b^{-1}(t))} \tilde Q (b^{-1}(t)) &= \lim_{t \to \infty} e^{-\tilde Q(t/ b^{-1}(t))} \frac{( b^{-1}(t) )^2}{t}  = \lim_{t \to \infty} e^{-\tilde Q(b(b^{-1}(t))/b^{-1}(t))} \frac{(b^{-1}(t))^2}{b(b^{-1}(t))} \\
&= \lim_{u \to \infty} e^{-\tilde Q(b(u)/u)} \frac{u^2}{b(u)} = \lim_{u \to \infty} e^{-\tilde Q(u/\tilde Q(u))} \tilde Q(u) \\
&\leq  \lim_{u \to \infty} e^{-\nu\log(u/\tilde Q(u))} \tilde Q(u) \\
&= \lim_{u \to \infty} \left( \frac{\tilde Q(u)}{u^{\nu/(\nu+1)}}  \right)^{\nu+1}.
\end{align*}
By part (a) $\tilde Q(u) \leq C u^{s}$ for any $s > \tilde r$ and $u$ sufficiently large, and by assumption $\tilde r < \nu/(\nu+1)$, so simply choose $\tilde r < s < \nu/(\nu+1)$ to see that the last limit is zero.
\end{proof}

\bigskip

\begin{lem} \label{L.Roz_pi}
Suppose Assumption \ref{A.StdCase} holds.  Define
$$L(h) = \int_{-\infty}^{\sqrt{n}} e^{h t} dV(t), \quad \text{and} \quad H(z) = \inf_{h > 0} (n \ln L(h) - zh).$$
Then, for any constant $c >0$, 
\begin{equation} \label{eq:NewPi}
e^{H(z)} =  e^{ n Q_k \left(\frac{z}{n}\right)  } (1+o(1))
\end{equation}
as $n \to \infty$, uniformly for $\sqrt{n} \leq z \leq c b^{-1}(n)$. 
\end{lem}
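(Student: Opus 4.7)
The plan is to treat $H(z)$ as a truncated Legendre transform and apply a saddle-point expansion. Because $V$ is truncated at $\sqrt n$, the function $L(h)$ is finite and smooth for every $h > 0$, and $\Lambda_n(h) := \ln L(h)$ is strictly convex with $\Lambda_n'(0) = E[Y\,\mathbf 1\{Y \le \sqrt n\}] \to 0$. For fixed $z > 0$ the infimum is therefore attained at the unique interior point $h^* = h^*(z,n)$ solving the saddle-point equation $\Lambda_n'(h^*) = z/n$, so
$$H(z) = n\bigl[\Lambda_n(h^*) - (z/n)\, h^*\bigr].$$
Over the range $z/n \in [n^{-1/2},\, c\, b^{-1}(n)/n]$ the right endpoint tends to zero since Lemma \ref{L.g_properties}(b) gives $b^{-1}(n) = o(n)$, so the entire analysis takes place in a shrinking neighborhood of the origin.

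Next I would Taylor expand $\Lambda_n$ up to degree $\tilde\kappa+1$, using that Assumption \ref{A.StdCase} guarantees $E|Y|^{\tilde\kappa+1} < \infty$. This forces the truncated moments $m_j^{(n)} := E[Y^j\,\mathbf 1\{Y \le \sqrt n\}]$ to converge to the true moments for $j \le \tilde\kappa+1$, and correspondingly the truncated cumulants $\gamma_j^{(n)} \to \gamma_j$. One then inverts the saddle-point equation by Lagrange inversion to get $h^* = (z/n)\bigl(1 + O(z/n)\bigr)$, and substitutes back to obtain
$$\Lambda_n(h^*) - (z/n)\, h^* = -\tfrac12 (z/n)^2 + \sum_{j=3}^{\tilde\kappa} \frac{\lambda_j^{(n)}}{j!}\, (z/n)^j + \varepsilon_n(z),$$
where the formal Cramér coefficients $\lambda_j^{(n)}$ are polynomials in $\gamma_2^{(n)},\ldots,\gamma_j^{(n)}$ and converge to the true $\lambda_j$. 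Multiplying by $n$ recovers $n Q_{\tilde\kappa}(z/n)$ as the dominant contribution.

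The real work is showing $n\varepsilon_n(z) = o(1)$ uniformly on $\sqrt n \le z \le c\, b^{-1}(n)$. Two distinct error sources must be controlled. The first is the tail of the Cramér expansion beyond degree $\tilde\kappa$, of order $n(z/n)^{\tilde\kappa+1}$; this is handled by the very definition of $\tilde\kappa$, namely $\limsup_{t\to\infty} \tilde Q(t)/t^{\tilde\kappa/(\tilde\kappa+1)} = 0$, which via $b(u) = u^2/\tilde Q(u)$ forces $b^{-1}(n) = o\bigl(n^{\tilde\kappa/(\tilde\kappa+1)}\bigr)$ and hence $n(z/n)^{\tilde\kappa+1} = o(1)$. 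The second error source, and where I expect the main obstacle, is the effect of replacing $V$ by its restriction to $(-\infty,\sqrt n]$: when expanding $L(h) = \int_{-\infty}^{\sqrt n} e^{hy}\, dV(y)$, the Taylor remainder picks up a factor $e^{h^*\sqrt n}$ from the endpoint, and $h^*\sqrt n \asymp z/\sqrt n$ can grow with $n$ when $\tilde r$ is large. Resolving this requires exploiting the tail bound $\overline V(t) \asymp D(t)\, t^{-2} e^{-\tilde Q(t)}$ from Assumption \ref{A.StdCase} together with Lemma \ref{L.g_properties}(e), which ensure that $e^{-\tilde Q(\sqrt n)}$ dominates any polynomial growth of the truncated integrand; this is precisely the role of the hazard rate regularity $\liminf_{t\to\infty} \tilde Q(t)/\log t > \tilde r/(1-\tilde r)$.
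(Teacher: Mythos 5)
Your overall architecture---a saddle-point analysis of the truncated Legendre transform, Lagrange inversion of the saddle-point equation $\Lambda_n'(h^*)=z/n$, and truncation of the Cram\'er series at degree $\tilde\kappa$ justified by $b^{-1}(n)=o(n^{\tilde\kappa/(\tilde\kappa+1)})$---is the right skeleton, and that last computation is correct. Note, however, that the paper does not prove this from first principles: it verifies the hypotheses of Lemmas 1a and 1b of \cite{Roz_93} (monotonicity of $\tilde Q(t)/t^s$, vanishing of $z^2V(-z)/D(z)$, and the estimate $D(n/\eta(\sqrt n))=1+o(1/\tilde Q(\eta(\sqrt n)))$ obtained from Lemma \ref{L.g_properties} (e.)) and cites those results, invoking Remark 1 of \cite{Roz_89} to allow the arbitrary constant $c$ in the upper limit $cb^{-1}(n)$. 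Your self-contained route is therefore genuinely different, but it has gaps precisely at the two points where Rozovskii's hypotheses do the work.

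First, convergence of the truncated cumulants $\gamma_j^{(n)}\to\gamma_j$ is not sufficient. The $j$-th term of the expansion is multiplied by $n(z/n)^j$, which diverges over the relevant range: at $z\asymp b^{-1}(n)$ one has $n(z/n)^2=z^2/n\asymp\tilde Q(b^{-1}(n))\to\infty$ and $n(z/n)^{\tilde\kappa}$ of order $n^{1/(\tilde\kappa+1)}$. You therefore need quantitative rates such as $|\gamma_2^{(n)}-1|=o(1/\tilde Q(b^{-1}(n)))$; this is exactly the role of the $D(\cdot)$ hypothesis in Rozovskii's lemma, checked in the paper via Lemma \ref{L.g_properties} (e.), and it is where the tail asymptotics of $\overline V$ genuinely enter. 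Your sketch does not address rates at all. Second, your characterization of the truncation error is off: the difficulty is not that $e^{-\tilde Q(\sqrt n)}$ must dominate ``polynomial growth,'' but that the endpoint factor $e^{h^*\sqrt n}$ is itself of exponential size comparable to $e^{\tilde Q(\sqrt n)}$. Using $(b^{-1}(n))^2=n\tilde Q(b^{-1}(n))$ and the regularity of $\tilde Q$ one gets $h^*\sqrt n\le C\,c\,\tilde Q(\sqrt n)$, so for a large constant $c$ the naive bound $e^{h^*\sqrt n-\tilde Q(\sqrt n)}$ gives nothing; this borderline cancellation is precisely why extending the range to an arbitrary multiple of $b^{-1}(n)$ requires the separate argument the paper imports from Remark 1 of \cite{Roz_89}. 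You flag this obstacle honestly, but the proposed resolution does not engage with the actual mechanism, so the proof as sketched is incomplete.
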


\begin{proof}
Choose $0 < \delta < 1-\tilde r$ and set $s = \tilde r + \delta$. Define $\eta(z) = b^{-1}(z^2)$ and
\begin{equation} \label{eq:Roz_pi}
\pi(z,n) = \left(1 - \Phi(z/\sqrt{n}) \right) \Indicator(z \leq \sqrt{n}) + \left(1 - \Phi(z/\sqrt{n}) \right) e^{\frac{z^2}{2n} + H(z)}.
\end{equation}

Suppose first that $\tilde r \in [0, 1/2)$ and note that in this case $\tilde\kappa = 2$ and $nQ_{\tilde\kappa}(z/n) = -z^2/(2n)$. Note that we can choose $\delta$ above so that $s < 1/2$. Then, by Lemma~\ref{L.g_properties}~(a.), $\tilde Q(t)/ t^{s}$ decreases for all sufficiently large $t$. Also, 
$$\frac{z^2}{D(z)} V(-z) = 0 \qquad \text{for all } z > \mu,$$
and
\begin{align*}
D(n/\eta(\sqrt{n})) &= \int_{-\mu}^{n/\eta(\sqrt{n})} u^2 dV(t) = 1 - \int_{n/\eta(\sqrt{n})}^\infty ( \tilde q(u)+ 2/u) e^{-\tilde Q(u)}  du \\
&= 1 + O\left(e^{-\tilde Q(n/\eta(\sqrt{n}))} \right) = 1 + o\left( 1/\tilde Q(\eta(\sqrt{n})) \right) \qquad \text{(by Lemma \ref{L.g_properties} (e.))}
\end{align*}
as $n \to \infty$. Define $\chi_n = b^{-1}(n) = \eta(\sqrt{n})$ and note that 
$$\frac{\chi_n^2}{\tilde Q(\chi_n) n} = \frac{b(\chi_n)}{n} = 1.$$
Then, by Lemma 1a in \cite{Roz_93}, we have
\begin{align*}
\pi(z,n) &= \left(1 - \Phi(z/\sqrt{n}) \right) (1 + o(1)) \\
&=  \left(1 - \Phi(z/\sqrt{n}) \right) e^{\frac{z^2}{2 n}  + n Q_{\tilde\kappa} \left(\frac{z}{n}\right)  } (1+o(1))
\end{align*}
as $n \to \infty$, uniformly for $\sqrt{n} \leq z \leq \gamma \chi_n$, where $\gamma > 0$ is an arbitrary constant (see the statement of Remark~1 in \cite{Roz_89} to see that the constant $\gamma$ can be arbitrary). 

Suppose now that $\tilde r \in [1/2, 1)$ and recall that by assumption $E[|Y|^{\tilde\kappa+1}] < \infty$.  Then, by Lemma 1b in \cite{Roz_93}, 
\begin{align*}
\pi(z,n) &= \left(1 - \Phi\left(  z/\sqrt{n} \right) \right) e^{\sum_{\nu = 1}^{\tilde \kappa-2} \frac{\lambda_{\nu+2}}{(\nu+2)!} \frac{z^{\nu+2}}{ n^{\nu+1}}  } (1+o(1)) \\
&=  \left(1 - \Phi(z/\sqrt{n}) \right) e^{\frac{z^2}{2 n}  + n Q_{\tilde\kappa} \left(\frac{z}{n}\right)  } (1+o(1)) 
\end{align*}
as $n \to \infty$, uniformly for $\sqrt{n} \leq z \leq \gamma \eta(\sqrt{n})$, where $\gamma > 0$ is an arbitrary constant. To see that $\gamma$ can be arbitrary see Remark 1 in \cite{Roz_89} where the statement of the result is
$$e^{H(z)} = e^{n Q_{\tilde\kappa}\left( \frac{x}{n} \right)} (1+o(1))$$
as $n \to \infty$, uniformly for $\sqrt{n} \leq z \leq \Lambda_n$, for a function $\Lambda_n$ that in \cite{Roz_93} is taken to be $\Lambda_n = \eta(\sqrt{n})$, and verify that all the arguments go through if we let $\Lambda_n = \eta(\sqrt{\bar\gamma n})$ for any constant $\bar\gamma > 0$. Then, use Lemma \ref{L.g_properties} (c.) to see that $\eta(\sqrt{\bar\gamma n}) = b^{-1}(\bar\gamma n) \leq (\bar\gamma \vee 1)^{1/(2-s)} b^{-1}(n)$. 
\end{proof}

The main approximation is given below.

\begin{thm} \label{T.Rozovskii}
Suppose Assumption \ref{A.StdCase} holds. Fix $\epsilon \in (0,1)$ and set 
\begin{align} 
\tilde \pi(y,n) &= \left( 1- \Phi(y/\sqrt{n}) \right) \Indicator(y \leq \sqrt{n}) + \left( 1- \Phi(y/\sqrt{n}) \right) e^{\frac{y^2}{2n} + nQ_{\tilde\kappa} \left( \frac{y}{n}\right)} \Indicator(y > \sqrt{n}), \label{eq:tildePi} \\
J(y,n) &= \sqrt{n} \left\{ \int_{y - \sqrt{n}}^\infty \overline{V}(t) \Phi'\left( \frac{y-t}{\sqrt{n}} \right) dt  + \frac{1}{\sqrt{2\pi}} \int_{\sqrt{n} \vee(y-b^{-1}(2(1+\epsilon)n))}^{y-\sqrt{n}} \overline{V}(t) e^{nQ_{\tilde\kappa} \left( \frac{y-t}{n} \right) } dt \right\}, \label{eq:Jint} \\ 
C_n &= \min_{t \geq \sqrt{n}} t \left(\frac{1}{2} + \frac{\tilde Q(t)}{t^2} n \right). \label{eq:Roz_Cn}
\end{align}
Then, as $n \to \infty$, uniformly in $y$, 
\begin{align*}
P\left(\tilde{S}_n > y \right) &= \left( \tilde\pi(y,n) \Indicator(y \leq (1+\epsilon) C_n) + J(y,n) \Indicator(y \geq (1-\epsilon) C_n) \right) (1+o(1)).
\end{align*}
Moreover, there exist constants $0 < \gamma_1 \leq 1 \leq \gamma_2$ such that $C_n \in [ \gamma_1 b^{-1}(n), \gamma_2 b^{-1}(n)]$. 
\end{thm}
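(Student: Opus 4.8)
The plan is to recognize Theorem \ref{T.Rozovskii} as a reformulation, in the notation of this paper, of the uniform random-walk estimate of \cite{Roz_93}; the only genuinely new ingredients are (i) the replacement of the convex-conjugate exponent $H(\cdot)$ appearing in Rozovskii's formulas by the truncated Cram\'er exponent $nQ_{\tilde\kappa}(\cdot/n)$, which is exactly the content of Lemma \ref{L.Roz_pi}, and (ii) the two-sided estimate $C_n\asymp b^{-1}(n)$, which is what aligns Rozovskii's transition threshold with the range of validity of Lemma \ref{L.Roz_pi}. Accordingly I would (a) quote the main theorem of \cite{Roz_93}, written with $\pi(y,n)$ as in \eqref{eq:Roz_pi} and with its ``intermediate'' integral phrased using $e^{H(y-t)}$ in place of $e^{nQ_{\tilde\kappa}((y-t)/n)}$; (b) prove $\gamma_1 b^{-1}(n)\le C_n\le\gamma_2 b^{-1}(n)$ for suitable constants; and (c) use (b) to check that every occurrence of $e^{H(\cdot)}$ that actually appears in the two regimes $y\le(1+\epsilon)C_n$ and $y\ge(1-\epsilon)C_n$ has its argument in a band $[\sqrt n,\,c\,b^{-1}(n)]$ covered by Lemma \ref{L.Roz_pi}, so that the substitution costs only a $1+o(1)$ factor, uniformly.

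For (b), take $n$ large enough that $\sqrt n\ge t_0$ and $\tilde Q\ge1$ on $[t_0,\infty)$, so that on the feasible set $t\ge\sqrt n$ one has $C_n=\min_{t\ge\sqrt n}f(t)$ with $f(t)=\tfrac t2+n\tilde Q(t)/t$. Write $\beta_n=b^{-1}(n)$; since $\tilde Q$ is continuous and $b(t)=t^{2-s}\cdot(t^s/\tilde Q(t))$ exhibits $b$ on $[t_0,\infty)$ as a product of (strictly) increasing functions by Lemma \ref{L.g_properties}(a.), $b$ is continuous, eventually strictly increasing, and diverges, whence $b(\beta_n)=\beta_n^2/\tilde Q(\beta_n)=n$, i.e.\ $\tilde Q(\beta_n)=\beta_n^2/n$; in particular $\beta_n>\sqrt n$. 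Evaluating $f$ at $\beta_n$ gives $f(\beta_n)=\tfrac12\beta_n+n(\beta_n^2/n)/\beta_n=\tfrac32\beta_n$, so $C_n\le\tfrac32\beta_n$. For the lower bound split the feasible set at $\beta_n$: if $t\ge\beta_n$ then $f(t)\ge t/2\ge\beta_n/2$; if $\sqrt n\le t\le\beta_n$, then Lemma \ref{L.g_properties}(a.) gives that $\tilde Q(t)/t^s$ is decreasing, so $\tilde Q(t)\ge\tilde Q(\beta_n)(t/\beta_n)^s$, and using $s<1$ together with $t\le\beta_n$,
$$f(t)\ge\frac{n\tilde Q(t)}{t}\ge\frac{n\tilde Q(\beta_n)\,t^{s-1}}{\beta_n^{s}}\ge\frac{n\tilde Q(\beta_n)\,\beta_n^{s-1}}{\beta_n^{s}}=\frac{n(\beta_n^2/n)}{\beta_n}=\beta_n.$$
Hence $C_n\ge\tfrac12\beta_n$, so one may take $\gamma_1=\tfrac12$, $\gamma_2=\tfrac32$ for all large $n$, and then shrink $\gamma_1$ and enlarge $\gamma_2$ to cover the remaining bounded range of $n$, on which $C_n/b^{-1}(n)$ is positive and finite.

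For (c), $\tilde\pi(y,n)$ is used only for $y\le(1+\epsilon)C_n\le(1+\epsilon)\gamma_2 b^{-1}(n)$, so in \eqref{eq:Roz_pi} the argument of $e^{H(\cdot)}$ lies in $[\sqrt n,(1+\epsilon)\gamma_2 b^{-1}(n)]$ and Lemma \ref{L.Roz_pi} (with $c=(1+\epsilon)\gamma_2$) turns $\pi(y,n)$ into $\tilde\pi(y,n)(1+o(1))$ uniformly; in the intermediate integral the increment $y-t$ never exceeds $b^{-1}(2(1+\epsilon)n)\le(2(1+\epsilon)\vee1)^{1/(2-s)}b^{-1}(n)$ by Lemma \ref{L.g_properties}(c.), so Lemma \ref{L.Roz_pi} likewise replaces $e^{H(y-t)}$ by $e^{nQ_{\tilde\kappa}((y-t)/n)}(1+o(1))$ uniformly over the range of integration, and the $o(1)$ factors out, producing $J(y,n)$ in the form \eqref{eq:Jint}. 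To combine these into one statement uniform in all $y$, observe that on the overlap $(1-\epsilon)C_n\le y\le(1+\epsilon)C_n$ both $\tilde\pi(y,n)$ and $J(y,n)$ are valid $(1+o(1))$-approximations of $P(\tilde S_n>y)$ and hence comparable there, so gluing the two indicator regions loses nothing.

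The main obstacle is the bookkeeping in (a) and (c): faithfully translating the precise statement of \cite{Roz_93}, whose transition point and whose intermediate integral are written in a somewhat different form, into the clean shape \eqref{eq:tildePi}--\eqref{eq:Roz_Cn}, and verifying that uniformity in $y$ genuinely survives both the substitution $e^{H(\cdot)}\mapsto e^{nQ_{\tilde\kappa}(\cdot/n)}$ and the regluing of the two regimes, rather than holding only on each regime in isolation. The estimate $C_n\asymp b^{-1}(n)$ obtained in (b) is precisely the quantitative input that makes this reconciliation go through, since it is what forces the band of arguments produced by Rozovskii's threshold to fall inside the band where Lemma \ref{L.Roz_pi} is valid.
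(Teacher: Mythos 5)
Your proposal follows the same high-level architecture as the paper's proof: (a) quote the uniform random-walk estimate from \cite{Roz_93} (Theorem 2 and Remark 1) in the form with $\pi(y,n)$ and the intermediate integral written through $e^{H(\cdot)}$; (b) two-sidedly bound $C_n$ by $b^{-1}(n)$; and (c) use Lemma \ref{L.Roz_pi}, whose range of validity $[\sqrt n, c\,b^{-1}(n)]$ matches the band produced by (b), to replace $e^{H(\cdot)}$ by $e^{nQ_{\tilde\kappa}(\cdot/n)}$ uniformly in both pieces. Where you genuinely diverge is in (b): the paper locates the minimizer by differentiating $h(t)=t(\tfrac12 + n\tilde Q(t)/t^2)$, showing $h'\le 0$ on $[t_0,b^{-1}(2n(1-s))]$ and $h'\ge 0$ on $[b^{-1}(2n),\infty)$ (which uses the Lebesgue density $\tilde q$ and the sign information $\liminf_t t\tilde q(t)\ge 0$), and then reads off $C_n\in[b^{-1}(2(1-s)n),\,b^{-1}(2n)]$. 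You instead evaluate $f$ directly at $\beta_n=b^{-1}(n)$ to get $f(\beta_n)=\tfrac32\beta_n$, and bound $f$ below on $[\sqrt n,\beta_n]$ and on $[\beta_n,\infty)$ separately, using only the eventual monotonicity of $\tilde Q(t)/t^s$ (Lemma \ref{L.g_properties}(a.)). This is cleaner and needs less: no use of $\tilde q$, no sign condition on $t\tilde q(t)$, and explicit constants $\gamma_1=\tfrac12$, $\gamma_2=\tfrac32$.

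One correction on step (c): the claim that ``on the overlap both $\tilde\pi$ and $J$ are valid $(1+o(1))$-approximations of $P(\tilde S_n>y)$, hence gluing loses nothing'' is neither correct nor needed. If both were separately asymptotic to $P$ on the overlap, their sum would overshoot by a factor of two, contradicting the theorem; in fact Rozovskii's Theorem~2 already delivers the sum $\pi\,\Indicator(y\le(1+\epsilon)C_n)+[\,\text{intermediate integral}\,]\,\Indicator(y\ge(1-\epsilon)C_n)$ as a single $(1+o(1))$-approximation of $P(\tilde S_n>y)$, with no separate gluing step required. Once you have that form, replacing $\pi$ by $\tilde\pi(1+o(1))$ and $e^{H(y-t)}$ by $e^{nQ_{\tilde\kappa}((y-t)/n)}(1+o(1))$ (both uniform, both terms nonnegative) gives the theorem directly, because for nonnegative $a,b$ one has $a(1+o(1))+b(1+o(1))=(a+b)(1+o(1))$. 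So the conclusion you reach is right, but the justification of the ``combining'' step should be dropped and replaced by this elementary observation.
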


\begin{proof}
Choose $0 < \delta < 1-\tilde r$ and set $s = \tilde r + \delta$. Note that by Lemma \ref{L.g_properties} (a.) $\tilde Q(t)/ t^s$ is eventually decreasing. Also, since $\var(Y_1) = 1$, 
$$P\left( \tilde S_n \leq t \sqrt{n} \right) \to \Phi(t) $$
by the CLT.  Define $L(h)$ and $H(z)$ as in Lemma \ref{L.Roz_pi} and let $\pi(z,n)$ be given by \eqref{eq:Roz_pi}. 

Set $\eta(z) = b^{-1}(z^2)$ and note that by Lemma \ref{L.g_properties} (b.) $b^{-1}(t) \leq t^{1/(2-s)}$ for all $t$ sufficiently large, so $\eta(z) = o(z^2)$. Since $D(t) \to 1$ as $t \to \infty$, we have 
$$D(z^2/\eta(z)) = 1+o(1) = D(z), \qquad z \to \infty. $$
Let $\gamma = 1/(2(1+\epsilon))$, and define 
$$\omega_n = b^{-1}(n/\gamma), \quad \upsilon_n = s \sqrt{n} \tilde Q(\sqrt{n}) \frac{D(\sqrt{n}/\tilde Q(\sqrt{n}))}{ D(\sqrt{n})}.$$ 

Then, by Theorem 2 and Remark 1 from \cite{Roz_93}, 
\begin{align*}
P\left(\tilde S_n > y\right) &= \left( \pi(y,n) \Indicator(y \leq (1+\epsilon) C_n) + \sqrt{n} \left\{ \int_{y - \sqrt{n}}^\infty \overline{V}(t) \Phi'\left( \frac{y-t}{\sqrt{n}} \right) dt  \right. \right. \\
&\hspace{5mm} \left. \left. + \frac{1}{\sqrt{2\pi}} \int_{\sqrt{n} \vee(y-\lambda)}^{y-\sqrt{n}} \overline{V}(t) e^{H(y-t)} dt \right\} \Indicator(y \geq (1-\epsilon) C_n) \right) (1+o(1)) 
\end{align*}
as $n \to \infty$, uniformly for all $y$ and for any $\lambda \in [\omega_n, \upsilon_n]$. Also, by Lemma \ref{L.Roz_pi}, 
$$e^{H(z)} =  e^{ nQ_\kappa \left( \frac{z}{n} \right)} (1+o(1))$$
uniformly for $\sqrt{n} \leq z \leq c b^{-1}(n)$ for any $c > 0$. We will show below that $C_n \leq b^{-1}(2n) \leq 2^{1/(2-s)} b^{-1}(n)$ (by Lemma \ref{L.g_properties} (c.)), so we can replace $\pi(y,n)$ by $\tilde \pi(y,n)$. Also, by choosing $\lambda = b^{-1}(2(1+\epsilon)n)$ and noting that for $t \geq y -\lambda$ we have $y - t \leq \lambda = b^{-1}(2(1+\epsilon)n) \leq 4^{1/(2-s)} b^{-1}(n)$ (by Lemma \ref{L.g_properties} (c.)), we can replace $e^{H(y-t)}$ with $e^{ nQ_{\tilde\kappa}\left( \frac{y-t}{n} \right)}$. This gives the statement of the theorem.

To verify the order of magnitude of $C_n$ let $h(t) = t\left( \frac{1}{2} + \frac{\tilde Q(t)}{t^2} n \right)$ and note that $h$ is continuous and a.s. differentiable. Recall that by assumption $\tilde Q$ has Lebesgue density $\tilde q$, and  note that $b(t)$ is eventually increasing, since by Lemma \ref{L.g_properties} (a.) $\tilde Q(t)/t^s$ is eventually decreasing. Then, for all $t_0 \leq t \leq b^{-1}(2 n(1-s))$,
$$h'(t) = \frac{1}{2} - n \cdot \frac{\tilde Q(t) - t \tilde q(t)}{t^2} \leq  \frac{1}{2} - n (1-s)  \cdot \frac{1}{b(t)} \leq 0.$$
For $t \geq b^{-1}(2 n)$ note that $\liminf_{t \to \infty} t \tilde q(t) \geq \liminf_{t \to \infty} \sigma t q(\sigma t+\mu) - 2 > a(r) - 2 \geq 0$.  It follows that
$$h'(t) = \frac{1}{2} - n \cdot \frac{\tilde Q(t) - t \tilde q(t)}{t^2} \geq \frac{1}{2} -  n \cdot \frac{1}{b(t)} \geq 0.$$
We conclude that $C_n \in [ b^{-1}(2(1-s) n), b^{-1}(2 n)]$, and by by Lemma \ref{L.g_properties} (c.) and (d.), 
$$b^{-1}(2(1-s)n) \geq (2(1-s) \wedge 1) b^{-1}(n) \quad \text{and} \quad b^{-1}(2n) \leq (2 \vee 1)^{1/(2-s)} b^{-1}(n)  .$$ 
\end{proof}

We now give a lemma stating that under Assumption \ref{A.Hazard}, the random variable $Y_1 = (X_1-\mu)/\sigma$ satisfies Assumption \ref{A.StdCase}. Throughout the rest of the paper,
\begin{equation} \label{eq:tildeQ_Def}
\tilde Q(t) = Q(\sigma t+\mu) - 2\log t,
\end{equation}
and the functions $b$ and $b^{-1}$, as well as the constant $\tilde r$, are defined according to this function.

\begin{lem} \label{eq:StdToNonneg}
Suppose $Q$ satisfies Assumption \ref{A.Hazard}, then $Y_1 = (X_1-\mu)/\sigma$ satisfies Assumption \ref{A.StdCase}. 
\end{lem}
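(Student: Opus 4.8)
The plan is to verify the four structural requirements of Assumption \ref{A.StdCase} for the centered and scaled variable $Y_1 = (X_1-\mu)/\sigma$ one by one, using $\tilde Q(t) = Q(\sigma t + \mu) - 2\log t$ as defined in \eqref{eq:tildeQ_Def}. The moment and mean/variance normalization are immediate: $E[Y_1] = 0$ and $\var(Y_1) = 1$ hold by construction of $\mu$ and $\sigma^2$, and the finiteness of $E[|Y_1|^{\tilde\kappa + 1}]$ will follow from the remark after \eqref{eq:kappa} that $E[X_1^{\kappa + s}] < \infty$ for suitable $s$, once we check that $\tilde\kappa = \kappa$ (or at least $\tilde\kappa \le \kappa$). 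The latter comes from comparing the defining maximization problems in \eqref{eq:kappa} and in Assumption \ref{A.StdCase}: since $\tilde Q(t) = Q(\sigma t + \mu) - 2\log t$ differs from $Q$ by a lower-order logarithmic term and by an affine change of variable, the quantities $\limsup_{t\to\infty} \tilde Q(t)/t^{l/(l+1)}$ and $\limsup_{t\to\infty} Q(t)/t^{l/(l+1)}$ are positive for exactly the same set of integers $l$.

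Next I would handle the tail-asymptotics requirement $\overline{V}(t) \asymp D(t) t^{-2} e^{-\tilde Q(t)}$ as $t\to\infty$, where $V$ is the law of $Y_1$. Writing $\overline{V}(t) = P(Y_1 > t) = \overline{F}(\sigma t + \mu)$ and recalling $\overline{F}(\sigma t + \mu) = e^{-Q(\sigma t + \mu)}$, we get $\overline{V}(t) = e^{-Q(\sigma t+\mu)} = t^{-2} e^{-(Q(\sigma t+\mu) - 2\log t)} = t^{-2} e^{-\tilde Q(t)}$ exactly. So it remains only to check that $D(t) = \int_{|u|<t} u^2 \, dV(u) \asymp 1$; but since $\var(Y_1) = 1 < \infty$, $D(t) \to 1$ as $t\to\infty$ by dominated convergence, which gives $D(t) \asymp 1$ and hence the required relation. (This is also exactly the fact used inside the proof of Theorem \ref{T.Rozovskii}, so it is safe to invoke.)

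The remaining and genuinely substantive step is to verify the two hazard-rate conditions on $\tilde Q$, namely $\tilde r := \limsup_{t\to\infty} t\tilde q(t)/\tilde Q(t) < 1$ and $\liminf_{t\to\infty} \tilde Q(t)/\log t > \tilde r/(1-\tilde r)$. Here $\tilde q(t) = \sigma q(\sigma t + \mu) - 2/t$ is the Lebesgue density of $\tilde Q$. For the first condition I would write
\[
\frac{t\tilde q(t)}{\tilde Q(t)} = \frac{\sigma t \, q(\sigma t+\mu) - 2}{Q(\sigma t+\mu) - 2\log t}
\]
and analyze the numerator and denominator separately. Setting $u = \sigma t + \mu$, one has $\sigma t\, q(\sigma t + \mu) = (u - \mu) q(u) = u q(u)(1 + o(1))$, and $Q(\sigma t + \mu) = Q(u)$ with $\log t = \log u - \log\sigma + o(1) = o(Q(u))$ because Assumption \ref{A.Hazard}(b.) and \eqref{eq:LowerBoundQ} give $Q(u) \ge \beta \log u$ with $\beta > 2$. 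Thus the $-2\log t$ and $-2$ corrections are asymptotically negligible relative to $Q(u)$ and $uq(u)$ respectively — provided $\liminf_{u\to\infty} uq(u) > 0$, which follows from Assumption \ref{A.Hazard}(b.) since $a(r) \ge 2 > 0$ — and therefore $\tilde r = \limsup_{u\to\infty} u q(u)/Q(u) = r < 1$ by \eqref{eq:r_Def} and Assumption \ref{A.Hazard}(a.). For the second condition, since $\tilde r = r$, I need $\liminf_{t\to\infty} \tilde Q(t)/\log t > r/(1-r)$. Using $\tilde Q(t) = Q(\sigma t + \mu) - 2\log t$ and $\log t \sim \log(\sigma t + \mu)$, this liminf equals $\liminf_{u\to\infty} Q(u)/\log u - 2$. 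By the remark following \eqref{eq:LowerBoundQ} (Lemma 3.6 of \cite{Bal_Dal_Klupp_04}), $\liminf_{u\to\infty} Q(u)/\log u \ge \liminf_{u\to\infty} u q(u) > a(r)$, so the liminf in question is $> a(r) - 2$. Finally I check $a(r) - 2 \ge r/(1-r)$: for $r = 0$ this reads $0 \ge 0$, and for $r \in (0,1)$ it reads $4/(1-r) - 2 \ge r/(1-r)$, i.e. $(4 - 2(1-r))/(1-r) = (2 + 2r)/(1-r) \ge r/(1-r)$, which holds since $2 + 2r > r$. The one subtlety to watch — and the main obstacle — is the degenerate edge behavior when $\tilde Q(t)$ itself could be small or the maxima in the definitions of $\tilde Q$, $\tilde\kappa$ hide off by one issues; I would address these by noting all statements are about $t\to\infty$ limits, so the affine shift and the $2\log t$ correction, both of order $o(Q(\sigma t+\mu))$, cannot change any of the limsup/liminf comparisons, and then citing \eqref{eq:LowerBoundQ} to guarantee $\tilde Q(t)\to\infty$ so that the ratios are eventually well-defined.
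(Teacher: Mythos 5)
The outline is fine for the bookkeeping parts (the identity $\overline V(t)=t^{-2}e^{-\tilde Q(t)}$, $D(t)\to 1$, and $\tilde\kappa=\kappa$), and it matches the paper's approach there. But the central step — your claim that $\tilde r=r$ — has a genuine gap, and it is not the route the paper takes.

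You argue that in
\[
\frac{t\tilde q(t)}{\tilde Q(t)}=\frac{\sigma t\,q(\sigma t+\mu)-2}{Q(\sigma t+\mu)-2\log t}
\]
the corrections $-2$ and $-2\log t$ are asymptotically negligible. For the $-2\log t$ in the denominator you assert $\log t=o(Q(\sigma t+\mu))$, but Assumption~\ref{A.Hazard} via \eqref{eq:LowerBoundQ} only gives $Q(u)\ge\beta\log u$ eventually for some $\beta>a(r)$; it does \emph{not} give $Q(u)/\log u\to\infty$. So $\log t/Q$ may have a positive $\limsup$, and the denominator is $Q(u)\bigl(1-2\log t/Q(u)\bigr)$ with the factor bounded below by $1-2/\beta$ but not tending to $1$. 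Similarly, for the $-2$ in the numerator you invoke $\liminf uq(u)>0$, but that only bounds $uq(u)$ away from zero; to write $uq(u)-2=uq(u)(1+o(1))$ you would need $uq(u)\to\infty$, which is not supplied by the assumptions. The net effect is that one can only conclude $r\le\tilde r\le r/(1-2/\beta)$, which may be strictly larger than $r$; the paper's proof deliberately proves only this upper bound.

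This matters because in your final step you verify $\liminf\tilde Q(t)/\log t>\tilde r/(1-\tilde r)$ by substituting $\tilde r=r$ and checking $a(r)-2\ge r/(1-r)$. Since $t\mapsto t/(1-t)$ is increasing, if $\tilde r>r$ the target $\tilde r/(1-\tilde r)$ is larger than $r/(1-r)$, and your check no longer establishes the inequality. The paper instead derives $\tilde r<1-2/(\beta-2)$ (from $\tilde r<r\beta/(\beta-2)$ together with $\beta>4/(1-r)$), which yields $\beta-2>2/(1-\tilde r)>\tilde r/(1-\tilde r)$ and closes the argument. To repair your proof you should drop the claim $\tilde r=r$, keep only the upper bound on $\tilde r$ in terms of $\beta$ and $r$, and then verify the $\liminf$ condition against $\tilde r$, not against $r$.
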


\begin{proof}
Let $\tilde Q(t) = Q(\sigma t+\mu) - 2\log t$, then $\overline{V}(t) = P(Y_1 > t) = e^{-\tilde Q(t)} / t^2$, and since $D(t) = \int_{|u|<t} t^2 dV(t) \to 1$ as $t \to \infty$, then $\overline{V}(t) \asymp D(t) t^{-2} e^{-\tilde Q(t)}$. Also, since $Q$ has Lebesgue density $q$, then $\tilde Q$ has Lebesgue density $\tilde q(t) = \sigma q(\sigma t+\mu) -2/t$. It follows that
\begin{align*}
\tilde r &= \limsup_{t \to \infty} \frac{t \tilde q(t)}{\tilde Q(t)} = \limsup_{t \to \infty} \frac{t \sigma q(\sigma t + \mu) - 2}{Q(\sigma t+\mu) - 2\log t} \leq \limsup_{z \to \infty}  \frac{zq(z)}{Q(z)-2\log z} \\
&\leq r \limsup_{z \to \infty} \frac{Q(z)}{Q(z)-2\log z}.
\end{align*}
By \eqref{eq:LowerBoundQ}, there exists $\beta > a(r) \geq 2$ such that $Q(t) \geq \beta \log t$ for all sufficiently large $t$. It follows that
$$r \limsup_{z \to \infty} \frac{r}{1 - 2(\log z)/Q(z)} < \frac{r}{1- 2/\beta},$$
where if $r > 0$ we have $r/(1-2/\beta) < r/(1-2/a(r)) = 2r/(1+r) < 1$. Therefore, $r \leq \tilde r < 1$ and 
$$\liminf_{t \to \infty} \frac{\tilde Q(t)}{\log t} = \liminf_{t \to \infty} \frac{Q(\sigma t+\mu)}{\log t} - 2 \geq \beta-2.$$
Clearly, if $r = 0$ then $\tilde r = 0$ and $\beta -2 > 0 = \tilde r/(1-\tilde r)$. If $r = 0$ we already showed that $\tilde r < \beta r/(\beta-2)$, which combined with $\beta > a(r) = 4/(1-r)$ gives $\tilde r < 1 - 2/(\beta-2)$, which in turn implies that $\beta-2 > 2/(1-\tilde r) > \tilde r/(1-\tilde r)$.

We also note that for any $l \in \{0, 1, 2, \dots \}$
$$\limsup_{t \to \infty} \frac{\tilde Q(t)}{t^{l/(l+1)}} = \sigma^{l/(l+1)} \limsup_{u\to \infty} \frac{Q(u) - 2\log u + 2\log \sigma}{u^{l/(l+1)}}.$$
Since for any $l \in \{1, 2, 3, \dots\}$ we have $\limsup_{t\to\infty} \tilde Q(t)/t^{l/(l+1)} = \sigma^{l/(l+1)} \limsup_{u \to \infty} Q(u)/ u^{l/(l+1)}$, it follows that $\tilde \kappa = \kappa$. Finally, from the discussion following the definition of $\kappa$, equation \eqref{eq:kappa}, we have that $E[X_1^{\kappa + s}] < \infty$ for any $2 \leq s\leq (2+r)/(1-r)$, which implies $E[|Y_1|^{\tilde\kappa+1}] < \infty$. 
\end{proof}

\bigskip

We are now ready to give a uniform approximation for $P(S_n > x)$ that will work over the region $n \geq K_r(x)$.  We choose not to use this approximation in the heavy tail region $1 \leq n \leq K_r(x)$ to avoid having to show that it is equivalent to the heavy tail asymptotic $n \overline{F}(x-n\mu)$. Instead, we use a result  from \cite{Bal_Dal_Klupp_04} that will give us without much additional work the heavy tail asymptotic directly.  

We point out that we will not apply Theorem \ref{T.Rozovskii} to the positive mean exactly the way it is stated, but instead we use a slight modification that will work better when applied to the queue. In particular, we will substitute the function $\tilde \pi(y,n)$ given by \eqref{eq:tildePi}, where $y = (x-n\mu)/\sigma$, with the following
\begin{equation} \label{eq:hatPi_Def}
\hat \pi_\kappa(x,n) = \begin{cases}  
\Phi(-y/\sqrt{x/\mu}), & \kappa = 2, \\
\Phi(-y/\sqrt{x/\mu}) \Indicator(n > N(x)) + \frac{\sqrt{x}}{y \sqrt{2\pi\mu}} e^{n Q_\kappa \left( \frac{y}{n} \right)} \Indicator(n \leq N(x)), & \kappa > 2. \end{cases}
\end{equation}
The function $J(y,n)$ given in \eqref{eq:Jint} does not need to be modified since its contribution will be shown to be negligible in the queue. 

\begin{lem} \label{L.UglyTail}
Suppose $Q$ satisfies Assumption \ref{A.Hazard}. Let $y = (x-n\mu)/\sigma$, fix $\epsilon \in (0,1)$ and define
$$B_\kappa(x,n) = \hat \pi_\kappa(x,n) \Indicator(y \leq (1+\epsilon) C_n) + J(y,n) \Indicator(y \geq (1-\epsilon) C_n),$$
where $\hat\pi_\kappa(x,n)$, $J(y,n)$ and $C_n$ are given by \eqref{eq:hatPi_Def}, \eqref{eq:Jint} and \eqref{eq:Roz_Cn}, respectively. Then,
$$\lim_{x \to \infty} \sup_{n \geq K_r(x)} \left| \frac{P(S_n > x)}{B_\kappa(x,n)} - 1 \right| = 0.$$
Moreover, there exist constants $0 < \gamma_1\leq 1 \leq \gamma_2$ such that $C_n \in [\gamma_1 b^{-1}(\mu n), \, \gamma_2 b^{-1}(\mu n)]$. 
\end{lem}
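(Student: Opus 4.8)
\textbf{Proof proposal for Lemma \ref{L.UglyTail}.}

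The plan is to transfer Theorem \ref{T.Rozovskii}, which is stated for the standardized mean-zero, unit-variance walk $\tilde S_n$, to the positive-mean walk $S_n$ via the identity $\{S_n > x\} = \{\tilde S_n > y\}$ with $y = (x - n\mu)/\sigma$, and then to patch up the two discrepancies between $\tilde\pi(y,n)$ and $\hat\pi_\kappa(x,n)$. First I would invoke Lemma \ref{eq:StdToNonneg} so that $Y_1 = (X_1-\mu)/\sigma$ satisfies Assumption \ref{A.StdCase}, with $\tilde Q$ as in \eqref{eq:tildeQ_Def}; this makes all of the machinery of Section \ref{S.UniformRW} available. Since $n \mapsto \tilde S_n$ runs over $n \geq 1$, for the uniformity ``in $y$'' in Theorem \ref{T.Rozovskii} to become uniformity over the queue region I must check that the range $\{y = (x-n\mu)/\sigma : n \geq K_r(x)\}$ is compatible: as $x \to \infty$ this forces $n \to \infty$ uniformly (because $K_r(x) \to \infty$ by the remarks following its definition), so the ``$n \to \infty$'' in Theorem \ref{T.Rozovskii} is exactly the ``$x \to \infty$'' here. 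Thus, directly from Theorem \ref{T.Rozovskii},
$$P(S_n > x) = \left(\tilde\pi(y,n)\Indicator(y \leq (1+\epsilon)C_n) + J(y,n)\Indicator(y \geq (1-\epsilon)C_n)\right)(1+o(1))$$
uniformly over $n \geq K_r(x)$ as $x \to \infty$, where now $C_n$ is built from the $\tilde Q$ of \eqref{eq:tildeQ_Def}. The ``moreover'' clause on $C_n \asymp b^{-1}(\mu n)$ should come from the $C_n \in [\gamma_1 b^{-1}(n),\gamma_2 b^{-1}(n)]$ bound of Theorem \ref{T.Rozovskii} together with a rescaling: in the positive-mean parametrization the relevant walk is $\tilde S_n$ with $y$-argument, but the natural ``time'' is $\mu n$, so one tracks through the definition of $C_n$ in \eqref{eq:Roz_Cn} that the minimizer sits at scale $b^{-1}(\mu n)$ rather than $b^{-1}(n)$; Lemma \ref{L.g_properties}~(c.)--(d.) turn the constant $\mu$ into harmless multiplicative constants $\gamma_1,\gamma_2$.

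The substantive step is replacing $\tilde\pi(y,n)$ by $\hat\pi_\kappa(x,n)$ on the event $\{y \leq (1+\epsilon)C_n\}$, which splits into the cases $\kappa = 2$ and $\kappa > 2$. When $\kappa = 2$ we have $Q_{\tilde\kappa}(y/n) = -y^2/(2n)$, so $\tilde\pi(y,n) = (1-\Phi(y/\sqrt n))\Indicator(y \leq \sqrt n) + (1-\Phi(y/\sqrt n))e^{y^2/(2n)-y^2/(2n)}\Indicator(y > \sqrt n) = 1-\Phi(y/\sqrt n) = \Phi(-y/\sqrt n)$ regardless of the indicator; the claim is then that $\Phi(-y/\sqrt n)$ may be replaced by $\Phi(-y/\sqrt{x/\mu})$ uniformly on $\{y \leq (1+\epsilon)C_n\}$. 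Since on this region $n \leq x/\mu$ (because $y = (x-n\mu)/\sigma \geq 0$), while $x/\mu - n = y\sigma/\mu = O(C_n/\text{something})$, the relative error in the Gaussian tail is controlled by comparing $y/\sqrt n$ with $y/\sqrt{x/\mu}$: writing $x/\mu = n + y\sigma/\mu$ one gets $y/\sqrt{n} - y/\sqrt{x/\mu} = O(y \cdot (y\sigma/\mu)/n^{3/2})$, and on $y \leq (1+\epsilon)C_n \asymp b^{-1}(\mu n) = o(n)$ (by Lemma \ref{L.g_properties}~(b.)) this difference tends to $0$, and since $\Phi(-t)$ varies slowly at the scales involved (using the standard Mills-ratio estimate $\overline\Phi(t+h)/\overline\Phi(t) = e^{-th + o(1)}$ for $h = o(1/t)$, or more simply $th \to 0$), the ratio $\Phi(-y/\sqrt n)/\Phi(-y/\sqrt{x/\mu}) \to 1$ uniformly. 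I would do this estimate carefully with $t = y/\sqrt{x/\mu}$ ranging over $[0, O(C_n/\sqrt n)]$.

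For $\kappa > 2$ the replacement is genuinely two-piece and this is where I expect the main obstacle. On $\{n > N(x)\}$ one has $y \leq \sqrt{x\log x}/\sigma \asymp \sqrt{n\log n}$, i.e. $y/\sqrt n$ bounded by $\sqrt{\log n}$-ish, so one is in the CLT/Cramér range where $e^{nQ_\kappa(y/n)} = e^{y^2/(2n)}e^{-y^2/(2n)}(1+o(1))$-type cancellation makes $\tilde\pi(y,n) \sim \Phi(-y/\sqrt n)$, and one then repeats the $\kappa=2$ argument to pass to $\Phi(-y/\sqrt{x/\mu})$; the point is that the higher-order terms $\lambda_j(y/n)^j$ for $j \geq 3$ times $n$ are $o(1)$ when $y = O(\sqrt{n\log n})$ because $n(y/n)^3 = y^3/n^2 = O((\log n)^{3/2}/\sqrt n) \to 0$. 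On $\{K_r(x) \leq n \leq N(x)\}$ we are above the CLT threshold, $y > \sqrt n$ eventually, and here $\tilde\pi(y,n) = (1-\Phi(y/\sqrt n))e^{y^2/(2n) + nQ_\kappa(y/n)}$; the task is to show this equals $\frac{\sqrt x}{y\sqrt{2\pi\mu}}e^{nQ_\kappa(y/n)}(1+o(1))$, which amounts to (i) the asymptotic $1-\Phi(t) = \frac{1}{t\sqrt{2\pi}}e^{-t^2/2}(1+o(1))$ applied with $t = y/\sqrt n$, giving $(1-\Phi(y/\sqrt n))e^{y^2/(2n)} = \frac{\sqrt n}{y\sqrt{2\pi}}(1+O(n/y^2))$, and (ii) replacing $\sqrt n/y$ by $\sqrt{x/\mu}/y$, i.e. $\sqrt n$ by $\sqrt{x/\mu}$, which is the same $n$-vs-$x/\mu$ comparison as before and is $1 + O(y\sigma/(\mu n)) = 1+o(1)$. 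The delicate points are: first, the Mills-ratio correction $O(n/y^2)$ must be $o(1)$ uniformly, which requires $y^2/n \to \infty$ uniformly on $K_r(x) \leq n \leq N(x)$ — this is exactly why the region starts at $K_r(x)$ and why the CLT-boundary piece $n > N(x)$ is peeled off and handled separately (there $y^2/n$ need not blow up, so one must use the full $\tilde\pi$ and the $\Phi(-y/\sqrt{x/\mu})$ form instead); second, one must verify that the error is controlled \emph{uniformly} across the whole range and that at the seam $n = N(x)$ the two formulas in \eqref{eq:hatPi_Def} are asymptotically consistent with $\tilde\pi$ (they need not equal each other, but each must be within $(1+o(1))$ of $P(S_n>x)$ on its indicator's support, and the overlap, if any, is negligible or consistent). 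Finally, since $J(y,n)$ is left untouched by construction, nothing further is needed for the big-$y$ branch, and assembling the pieces — $\kappa = 2$; $\kappa > 2$ with $n > N(x)$; $\kappa > 2$ with $n \leq N(x)$; the $J$-branch unchanged; and the $C_n$ estimate — yields $B_\kappa(x,n)$ and completes the proof.
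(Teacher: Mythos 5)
Your proposal follows the same route as the paper's proof: standardize to $Y_1 = (X_1-\mu)/\sigma$, invoke Theorem \ref{T.Rozovskii} via Lemma \ref{eq:StdToNonneg} to get the $\tilde\pi/J$ decomposition uniformly over $n \geq K_r(x)$, restrict the $\tilde\pi$-vs-$\hat\pi_\kappa$ comparison to the event $\{y \leq (1+\epsilon)C_n\}$ (which the paper shows forces $n > l(x) = (x - 2\sigma\gamma_2 b^{-1}(x))/\mu$), split the range at $N(x)$ (and implicitly $m(x)$, the exact solution to $y = \sqrt n$), and use Mills-ratio estimates together with the Cram\'er-cancellation $nQ_\kappa(y/n) = -y^2/(2n)(1+o(1))$. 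The $C_n$-scaling via Lemma \ref{L.g_properties}(c.)--(d.) is also what the paper does. So the overall architecture matches.

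There is, however, one genuine gap. You assert that on $\{y \leq (1+\epsilon)C_n\}$ one has $n \leq x/\mu$ ``because $y = (x-n\mu)/\sigma \geq 0$''. This is false: that event contains all $n$ with $y \leq (1+\epsilon)C_n$, including $n > x/\mu$, where $y < 0$. The supremum in the lemma is over all $n \geq K_r(x)$, and the $n > x/\mu$ range (in fact $n$ up to $\infty$) must be handled, because there $\tilde\pi(y,n) = \Phi(-y/\sqrt n)$ and $\hat\pi_\kappa(x,n) = \Phi(-y/\sqrt{x/\mu})$ still need to be shown asymptotically equivalent. Your Taylor-type comparison of the arguments $y/\sqrt n$ versus $y/\sqrt{x/\mu}$ is structured for $y \geq 0$ and small; for $y$ large negative you need a different estimate, since the relative error of the Gaussian tail is not controlled in the same way. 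The paper resolves this by introducing a second threshold $s(x) = (x + \sqrt x\log x)/\mu$ and bounding the contribution of $n > s(x)$ by $2\Phi(y/\sqrt n) \leq 2\Phi(-\sqrt\mu\log x/(2\sigma))$, and treating $m(x) < n \leq s(x)$ (where $|y|$ is moderate) via the Lipschitz bound $|\Phi(y/\sqrt{x/\mu}) - \Phi(y/\sqrt n)| \leq \Phi'(0)|y| \cdot |\sqrt n - \sqrt{x/\mu}|/\sqrt{n x/\mu}$, then dividing by $\Phi(-y/\sqrt{x/\mu}) \to \Phi(-1)$ to get a relative error $O((\log x)^2/\sqrt x)$. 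You would need to add such a two-sided argument (covering $n > x/\mu$) to make your proof complete; the rest of your sketch, once made precise, matches the paper's estimates of \eqref{eq:large_n}--\eqref{eq:small_n} and \eqref{eq:k_is_2}.
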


\begin{proof}
By Theorem \ref{T.Rozovskii} and Lemma \ref{eq:StdToNonneg}, we have that
$$P(S_n > x) = \left( \tilde \pi(y,n) \Indicator(y \leq (1+\epsilon) C_n) + J(y,n) \Indicator(y \geq (1-\epsilon)C_n) \right)(1+o(1))$$
as $x \to \infty$ for all $n \geq K_r(x)$, where $\tilde \pi(y,n)$ is given in \eqref{eq:tildePi}. Furthermore, by the same theorem and Lemma~\ref{L.g_properties} (c.) and (d.), there exist constants $0 < \gamma_1 \leq 1 \leq \gamma_2$ such that $C_n \in [\gamma_1 b^{-1}(\mu n), \gamma_2 b^{-1}(\mu n)]$. It can be verified that
$$\{y \leq (1+\epsilon) C_n\} \subset \{y \leq 2\gamma_2 b^{-1}(\mu n) \}  \subset \{ x- 2\sigma \gamma_2 b^{-1}(x) \leq n\mu \} = \{ n > l(x) \}$$
for sufficiently large $x$, where $l(x) = (x - 2\sigma \gamma_2 b^{-1}(x))/\mu$, so all that remains to show is that $\tilde \pi(y,n) = \hat \pi_\kappa(x,n)(1+o(1))$ as $x \to \infty$ for all $n > l(x)$. 

Note that after some algebra we can obtain the equivalence
$$\{ y \leq \sqrt{n} \} = \{ n \geq m(x) \}, \quad \text{where} \quad m(x) = \frac{x}{\mu} + \frac{\sigma^2}{2\mu^2} - \frac{\sigma \sqrt{x}}{\mu^{3/2}} \sqrt{ 1 + \frac{\sigma^2}{4\mu x}}.$$
Since $N(x) = \lfloor (x-\sqrt{x\log x})/\mu \rfloor < m(x)$ for sufficiently large $x$, it follows that for $\kappa > 2$,
\begin{align}
&\left| \hat \pi_\kappa(x,n) - \tilde \pi(y,n) \right| \Indicator(n > l(x)) \notag \\
&= \left| \Phi\left( -y/\sqrt{x/\mu} \right) \Indicator( n > N(x)) - \Phi\left( -y/\sqrt{n} \right) \Indicator( n > m(x)) \right. \notag \\
&\hspace{3mm} + \left. \frac{\sqrt{x}}{y \sqrt{2\pi \mu}} e^{n Q_\kappa \left( \frac{y}{n} \right)} \Indicator(l(x) < n \leq N(x)) -  \Phi(-y/\sqrt{n})  e^{\frac{y^2}{2n} + n Q_\kappa\left( \frac{y}{n} \right)} \Indicator(l(x) < n \leq m(x))  \right| \notag \\
&\leq \left|  \Phi\left( -y/\sqrt{x/\mu} \right) - \Phi\left( -y/\sqrt{n} \right)    \right| \Indicator(n > m(x)) \label{eq:large_n} \\
&\hspace{3mm} + \left| \Phi\left( -y/\sqrt{x/\mu} \right) -  \Phi(-y/\sqrt{n})  e^{\frac{y^2}{2n} + n Q_k\left( \frac{y}{n} \right)} \right| \Indicator(N(x) < n \leq m(x)) \label{eq:medium_n} \\
&\hspace{3mm} + \left| \frac{\sqrt{x}}{y \sqrt{2\pi \mu}} e^{n Q_\kappa \left( \frac{y}{n} \right)}  -  \Phi(-y/\sqrt{n})  e^{\frac{y^2}{2n} + n Q_\kappa \left( \frac{y}{n} \right)}   \right| \Indicator(l(x) < n \leq N(x)), \label{eq:small_n}
\end{align}
while for $\kappa = 2$ we have $y^2/(2n) + n Q_\kappa( y/n) = 0$ and 
\begin{align}
&\left| \hat \pi_\kappa(x,n) - \tilde \pi(y,n) \right| \Indicator(n > l(x)) = \left| \Phi\left( -y/\sqrt{x/\mu} \right)  - \Phi\left( -y/\sqrt{n} \right)  \right| \Indicator(n > l(x)). \label{eq:k_is_2}
\end{align}

To analyze \eqref{eq:large_n} and the corresponding segment of \eqref{eq:k_is_2}  define $s(x) = (x+\sqrt{x}\log x)/\mu$, then
\begin{align*}
&\left|  \Phi\left( -y/\sqrt{x/\mu} \right) - \Phi\left( -y/\sqrt{n} \right)    \right| \Indicator(n > m(x)) \\
&\leq \left|  \Phi\left( y/\sqrt{x/\mu} \right) - \Phi\left( y/\sqrt{n} \right)    \right| \Indicator(m(x) < n \leq s(x)) +  2 \Phi\left( y/\sqrt{n} \right) \Indicator(n > s(x)) \\
&\leq \Phi'(0) |y| \frac{|\sqrt{n} - \sqrt{x/\mu}|}{\sqrt{nx/\mu}}  \Indicator(m(x) < n \leq s(x)) +  2 \Phi\left( -(s(x)\mu-x)/\sqrt{\sigma^2 s(x)} \right) \Indicator(n > s(x)) \\
&\leq C \frac{(x-n\mu)^2}{x^{3/2}}  \Indicator(m(x) < n \leq s(x)) + 2 \Phi\left( - \frac{\sqrt{\mu}\log x }{\sigma (1 + o(1))} \right) \Indicator(n > s(x)) \\
&\leq C \min\left\{ \frac{(\log x)^2}{\sqrt{x}}, \,  \Phi\left( - \frac{\sqrt{\mu}\log x }{2\sigma} \right)   \right\} \Indicator(n > m(x)).
\end{align*}
Since for $n > m(x)$ we have $\Phi\left(-y/\sqrt{x/\mu}\right) \geq \Phi\left(-(x-\mu m(x))/\sqrt{\sigma^2 x/\mu}\right) \to \Phi(-1)$, it follows that \eqref{eq:large_n} and the corresponding segment of \eqref{eq:k_is_2} are bounded by
$$C \varphi_1(x) \Phi\left(-y/\sqrt{x/\mu} \right) \Indicator(n > m(x)),$$
where $\varphi_1(x) =  \min\left\{ (\log x)^2/\sqrt{x}, \,  \Phi\left( - \sqrt{\mu}\log x /(2\sigma) \right)   \right\}$. To bound \eqref{eq:medium_n} and the corresponding segment of \eqref{eq:k_is_2} we note that for $N(x) < n \leq m(x)$ we have $ n Q_\kappa(y/n) = -y^2/(2n) + O\left( y^3/n^2 \right)$ (recall that $nQ_\kappa(y/n) = -y^2/(2n)$ if $\kappa = 2$), so
\begin{align*}
&\left| \Phi\left( -y/\sqrt{x/\mu} \right) -  \Phi(-y/\sqrt{n})  e^{\frac{y^2}{2n} + n Q_\kappa\left( \frac{y}{n} \right)} \right| \\
&\leq \left| \Phi\left( -y/\sqrt{x/\mu} \right) -  \Phi(-y/\sqrt{n}) \right| + \left| 1 - e^{\frac{y^2}{2n} + n Q_\kappa\left( \frac{y}{n} \right)} \right|  \Phi(-y/\sqrt{n}) \\
&\leq \Phi'\left( y/\sqrt{x/\mu} \right) y \frac{(\sqrt{x/\mu} - \sqrt{n})}{\sqrt{nx/\mu}} + C \frac{y^3}{n^2} \Phi\left( -y/\sqrt{x/\mu}  \right) \\
&\leq C \frac{(y^2/(x/\mu) + 1)}{y/\sqrt{x/\mu}} \Phi\left(- y/\sqrt{x/\mu} \right) \frac{(x-n\mu)^2}{x^{3/2}} + C \frac{(x-n\mu)^3}{x^2} \Phi\left( -y/\sqrt{x/\mu}  \right) \\
&\leq C \left( \frac{(y^2 + x) (x-n\mu)}{x^{2}} + \frac{(x-n\mu)^3}{x^2}  \right) \Phi\left( -y/\sqrt{x/\mu}  \right) \\
&\leq C \frac{(\log x)^{3/2}}{\sqrt{x}} \Phi\left( -y/\sqrt{x/\mu}  \right),
\end{align*}
where for the third inequality we used the relation $\Phi(-z) \geq \Phi'(z) z/(z^2+1)$ for all $z > 0$. Therefore, \eqref{eq:medium_n} and the corresponding segment of \eqref{eq:k_is_2} are bounded by
$$C \varphi_2(x) \Phi\left( -y/\sqrt{x/\mu}  \right) \Indicator(N(x) < n \leq m(x)),$$
where $\varphi_2(x) = (\log x)^{3/2}/\sqrt{x}$.  To bound the last segment of \eqref{eq:k_is_2} note that the preceding calculation yields
\begin{align*}
&\left|  \Phi\left( -y/\sqrt{x/\mu} \right) - \Phi\left( -y/\sqrt{n} \right)    \right| \Indicator(l(x) < n \leq N(x)) \\
&\leq C \frac{(y^2+x)(x-n\mu)}{x^2} \Phi\left(- y/\sqrt{x/\mu} \right) \Indicator(l(x) < n \leq N(x)) \\
&\leq C \frac{ ((b^{-1}(x))^2 + x) b^{-1}(x)}{x^2}  \Phi\left(- y/\sqrt{x/\mu} \right) \Indicator(l(x) < n \leq N(x)).
\end{align*}
Since $\kappa = 2$ implies that $\tilde Q(t)/\sqrt{t} \to 0$, then 
$$\lim_{x \to \infty} \frac{(b^{-1}(x))^2}{x} = \lim_{x \to \infty} \frac{(b^{-1}(x))^2}{b(b^{-1}(x))} = \lim_{t \to \infty} \frac{t^2}{t^2/\tilde Q(t)} = \infty,$$
so 
$$C \frac{ ((b^{-1}(x))^2 + x) b^{-1}(x)}{x^2}  \leq C \frac{(b^{-1}(x))^3}{x^2} \triangleq C \varphi_3(x),$$
where
$$\lim_{x \to \infty} \varphi_3(x) =  \lim_{x \to \infty} \frac{(b^{-1}(x))^3}{(b(b^{-1}(x)))^2} =  \lim_{t \to \infty} \frac{t^3}{(t^2/\tilde Q(t))^2} = \lim_{t \to \infty} \frac{\tilde Q(t)^2}{t} = 0.$$
We have thus shown that when $\kappa = 2$, 
$$\left|  \Phi\left( -y/\sqrt{x/\mu} \right) - \Phi\left( -y/\sqrt{n} \right)    \right| \Indicator(n > l(x)) \leq C \max_{i \in \{1,2,3\}} \varphi_i(x) \Phi\left(- y/\sqrt{x/\mu} \right) \Indicator(n > l(x)).$$

Finally, to bound \eqref{eq:small_n} we use the inequalitvy $\Phi'(z)z/(z^2+1) \leq \Phi(-z)$ to obtain, for $l(x) < n \leq N(x)$, 
\begin{align*}
\left| \frac{\sqrt{x}}{y \sqrt{2\pi\mu}} e^{n Q_\kappa\left( \frac{y}{n} \right)}  -  \Phi(-y/\sqrt{n})  e^{\frac{y^2}{2n} + n Q_\kappa\left( \frac{y}{n} \right)}   \right|  &= \left( \frac{\sqrt{x}}{y \sqrt{\mu}}  - \frac{\Phi(-y/\sqrt{n})}{\Phi'(y/\sqrt{n})}    \right) \frac{1}{\sqrt{2\pi}} \, e^{n Q_\kappa\left( \frac{y}{n} \right)} \\
&\leq \left( \frac{\sqrt{x}}{y \sqrt{\mu}}  - \frac{y/\sqrt{n}}{y^2/n + 1}    \right) \frac{1}{\sqrt{2\pi}} \, e^{n Q_\kappa\left( \frac{y}{n} \right)} \\
&\leq \left( \sqrt{x/\mu}  - \sqrt{n} + \frac{n^{3/2}}{y^2 }    \right) \frac{1}{y\sqrt{2\pi}} \, e^{n Q_\kappa\left( \frac{y}{n} \right)} \\
&\leq C \left( \frac{x-n\mu}{x} + \frac{x}{(x-n\mu)^2} \right) \frac{\sqrt{x}}{y\sqrt{2\pi\mu}} \, e^{n Q_\kappa\left( \frac{y}{n} \right)} \\
&\leq C \left( \frac{b^{-1}(x)}{x} + \frac{1}{\log x} \right) \frac{\sqrt{x}}{y\sqrt{2\pi\mu}} \, e^{n Q_\kappa\left( \frac{y}{n} \right)}.
\end{align*}
It follows that \eqref{eq:small_n} is bounded by
$$C \varphi_4(x) \frac{\sqrt{x}}{y\sqrt{2\pi\mu}} \, e^{n Q_\kappa\left( \frac{y}{n} \right)} \Indicator(l(x) < n \leq N(x)),$$
where $\varphi_4(x) =  b^{-1}(x)/x + 1/\log x$. We conclude that 
$$\left| \hat \pi_\kappa(x,n) - \tilde \pi(x,n) \right|  \leq C \max_{i\in \{1,2,3,4\}} \varphi_i(x) \hat\pi_\kappa(x,n).$$
for all $n > l(x)$. This completes the proof. 
\end{proof}

\subsection{A first approximation for $P(W_\infty(\rho) > x)$} \label{SS.FirstApprox}

We will now give an approximation for $P(W_{\infty}(\rho) > x)$, that although too complicated to be used in practice, will serve as an intermediate step towards obtaining the more explicit approximations given in Theorems \ref{T.SumApprox} and \ref{T.Main}.  

The idea of this section is to substitute $P(S_n > x)$ in the Pollaczek-Khintchine formula \eqref{eq:Poll-Khin} the heavy-tail approximation $n \overline{F}(x-n\mu)$ in the range $1 \leq n \leq K_r(x)$, and by $B_\kappa(x,n)$, as defined in Lemma \ref{L.UglyTail}, in the range $n > K_r(x)$. 

The intermediate approximation for $P(W_\infty(\rho) > x)$ is given by
\begin{align}
S_\kappa(\rho,x) &= \sum_{n=1}^{K_r(x)} (1-\rho)\rho^n n \overline{F}(x-n\mu) + \sum_{n=K_r(x)+1}^\infty (1-\rho) \rho^n \hat \pi_k(x,n) \Indicator (y \leq (1+\epsilon) C_n)  \notag \\
&\hspace{3mm} + \sum_{n=K_r(x)+1}^\infty (1-\rho) \rho^n  J(y,n) \Indicator(y \geq (1-\epsilon) C_n), \label{eq:S_rho_x}
\end{align}
where $y = (x-\mu n)/\sigma$, and $\hat\pi_\kappa(x,n)$, $J(y,n)$ and $C_n$ are given by \eqref{eq:hatPi_Def}, \eqref{eq:Jint} and \eqref{eq:Roz_Cn}, respectively. The last term in \eqref{eq:S_rho_x} corresponds to the so-called ``intermediate domain", where as mentioned in Section \ref{S.MainResults}, the asymptotic behavior of $P(S_n > x)$ is rather complicated. Under additional (differentiability) assumptions on $Q$, more explicit asymptotics for $J(y,n)$ have been derived in \cite{Roz_93} (see also \cite{Bor00b} for other results applicable to this region). We point out that $S_\kappa(\rho,x)$ is ``very close" to being the  approximation in Theorem \ref{T.SumApprox} if we replace $\Indicator(y \leq (1+\epsilon) C_n)$ with $\Indicator(n \geq M(x))$ and ignore the entire third term of $S_\kappa(\rho,x)$, to see this sum the tail of the second term of $S_\kappa(\rho,x)$ to write it as the expectation of a function of a normal random variable.

We will now show the asymptotic equivalence of $P(W_\infty(\rho) > x)$ and $S_\kappa(\rho,x)$.

%%% Heavy Tail region

\begin{lem} \label{L.HeavyTail_region}
Suppose $Q$ satisfies Assumption \ref{A.Hazard}, then
$$\lim_{x \to \infty} \sup_{1 \leq n \leq K_r(x)}  \left| \frac{P(S_n > x)}{n \overline{F}(x-n\mu)} -1  \right| = 0.$$
\end{lem}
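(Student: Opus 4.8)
The plan is to reduce the uniform statement to a known result on subexponential random sums — specifically Theorem 1 in \cite{Bal_Klupp_04} (cited in the text right before this lemma) — which asserts that $P(S_n > x) \sim n \overline{F}(x)$ as $x \to \infty$ uniformly over $1 \le n \le c\,\omega_2(x)$ (equivalently, in the centered formulation, uniformly over $n$ with $n \le c\,\omega_2(x - n\mu)$ after translating to the positive-mean case). Recall $X_i$ has distribution $F$ with mean $\mu$, and $\omega_2(t) = t^2/(Q(t)\vee 1)^2$. The first thing I would do is dispose of the two cases in the definition of $K_r(x)$: for $r \in [0,1/2)$ one has $K_r(x) = \lfloor (x - \omega_2^{-1}(x))/\mu \rfloor \vee 0$, and since (as noted in the excerpt) $\omega_2^{-1}(x) = o(x)$, the range $1 \le n \le K_r(x)$ forces $x - n\mu \ge \omega_2^{-1}(x)$, hence $\omega_2(x - n\mu) \ge \omega_2(\omega_2^{-1}(x)) = x \ge n\mu \ge n$ (using Lemma \ref{L.RightInverse}(b) and monotonicity of $\omega_2$ for large arguments). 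For $r \in [1/2,1)$, $K_r(x) = \lfloor \min\{\omega_2(x), x/(2\mu)\}\rfloor \vee 0$, so $n \le x/(2\mu)$ gives $x - n\mu \ge x/2$, and then $n \le \omega_2(x) \le C\,\omega_2(x-n\mu)$ provided $\omega_2$ does not increase too wildly on $[x/2, x]$ — one checks $\omega_2(x)/\omega_2(x/2) = 4\,(Q(x/2)\vee 1)^2/(Q(x)\vee1)^2 \to 4$ since $Q(x/2)/Q(x) \to 1$ by the regular-variation-type bound following from Assumption \ref{A.Hazard}(a) (Proposition 3.7 in \cite{Bal_Dal_Klupp_04}: $Q(t)/t^{r+\delta}$ eventually decreasing forces $Q$ to be slowly-varying-dominated). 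So in both cases, the range $1 \le n \le K_r(x)$ lies inside $\{n : n \le c\,\omega_2(x-n\mu)\}$ for large $x$ and an appropriate constant $c$.

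With the range thus contained in the domain of validity of the random-sum result, I would apply Theorem 1 of \cite{Bal_Klupp_04} to the centered walk: writing $x - n\mu$ for the effective tail level, the theorem yields
$$\sup_{1 \le n \le K_r(x)} \left| \frac{P(S_n - n\mu > x - n\mu)}{n\,\overline{F}(x-n\mu)} - 1 \right| \to 0,$$
but $P(S_n > x) = P(S_n - n\mu > x - n\mu)$ is just the same event, so this is exactly the claimed uniform limit. The only subtlety is that the cited theorem is typically stated for centered (or nonnegative) increments; here $X_i \ge 0$ and $F$ is the integrated-tail distribution, which is precisely the setting of \cite{Bal_Klupp_04, Bal_Dal_Klupp_04}, and Assumption \ref{A.Hazard} was designed to match their Conditions B/C (as the excerpt explicitly states), so the hypotheses transfer directly.

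The main obstacle — the only place where genuine work is needed — is verifying the containment $K_r(x) \le c\,\omega_2(x-n\mu)$ cleanly in the regime $r \in [1/2,1)$, where $\omega_2^{-1}(x)$ need not be $o(x)$ and one must instead exploit the explicit truncation at $x/(2\mu)$ together with a quantitative comparison of $\omega_2$ at the two scales $x/2$ and $x$. This comes down to the elementary but slightly delicate fact that $Q(\theta x)/Q(x) \to 1$ for fixed $\theta \in (0,1)$, which follows from Assumption \ref{A.Hazard}(a): since $Q(t)/t^{r+\delta}$ is eventually non-increasing and $Q(t)/t^{r-\delta}$ is eventually non-decreasing (the latter also from the representation in \cite{Bal_Dal_Klupp_04}), one sandwiches $Q(\theta x)/Q(x)$ between $\theta^{r+\delta}$ and $\theta^{r-\delta}$ for all small $\delta$, forcing the ratio to $1$. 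Everything else is bookkeeping: translating the random-walk uniformity statement to the positive-mean case and matching notation.
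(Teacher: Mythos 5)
Your approach is in the same family as the paper's — establish a range containment and then invoke a uniform result from Baltr\=unas and Kl\"uppelberg — but it departs from the paper in a way that introduces a gap and contains one genuine error of fact.

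\paragraph*{The gap: uniformity for small $n$.}
You rest everything on the assertion that Theorem 1 of \cite{Bal_Klupp_04} gives $P(S_n>x)\sim n\overline{F}(x)$ \emph{uniformly over} $1\le n\le c\,\omega_2(x)$ as $x\to\infty$. The sentence in the paper you point to does not claim uniformity; it says only that the asymptotic ``is known to hold... for $n\le c\,\omega_2(x)$,'' which is compatible with a pointwise (along a sequence) statement. The paper's own proof does not use Theorem~1. It uses Theorem~3.1 of that reference, whose uniformity goes the other way: it is a $\lim_{n\to\infty}\sup_{t\ge t_n}$ statement, which controls only $n$ beyond some $n_\epsilon$. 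That is why the paper's proof ends with a separate appeal to the standard subexponentiality fact that $P(S_n>x)=n\overline{F}(x)(1+o(1))$ uniformly over $1\le n\le a(x)$ for \emph{some} $a(x)\to\infty$ — this is what patches the range $n\in[1,n_\epsilon]$ that Theorem~3.1 cannot reach. Your proposal never mentions this patch. If the cited Theorem~1 actually did start the uniformity at $n=1$, the paper would have had no reason to invoke the subexponentiality argument. As written, your argument covers only those $n$ reachable by the large-$n$ limit, and leaves a genuine hole at bounded $n$.

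\paragraph*{The error: $Q(\theta x)/Q(x)\not\to 1$ in general.}
To handle $r\in[1/2,1)$ you claim $Q(\theta x)/Q(x)\to 1$ for fixed $\theta\in(0,1)$, citing the monotonicity of $Q(t)/t^{r\pm\delta}$. This is false in general: take $Q(t)=t^{s}$ with $s\in(1/2,1)$, which satisfies Assumption~\ref{A.Hazard} and has $r=s$; then $Q(x/2)/Q(x)=2^{-s}\ne 1$. Moreover the half of your sandwich relying on $Q(t)/t^{r-\delta}$ being eventually non-decreasing is unsupported: Assumption~\ref{A.Hazard}(a) controls only $\limsup tq(t)/Q(t)$, which bounds the ratio from above and gives the decreasing direction, not the increasing one. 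Fortunately you don't need any of this: $\omega_2(x)/\omega_2(x/2)=4\,Q(x/2)^2/Q(x)^2\le 4$ simply because $Q$ is non-decreasing (being a cumulative hazard). So your conclusion is fine; the reasoning you gave for it is not.

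\paragraph*{A minor point.}
In the $r\in[0,1/2)$ case, the step $x-n\mu\ge\omega_2^{-1}(x)\Rightarrow \omega_2(x-n\mu)\ge x$ needs that $\omega_2$ is eventually increasing (which is true here since $\limsup tq(t)/Q(t)=r<1$ makes $\omega_2'(t)=2t(Q(t)-tq(t))/Q(t)^3>0$ eventually), but this should be said; Lemma~\ref{L.RightInverse}(b) alone only gives $\omega_2(\omega_2^{-1}(x))=x$ and not monotonicity. The paper avoids this entirely by phrasing the containment as $x-n\mu\ge t_n$ with $t_n$ explicit and applying $\omega_2^{-1}$ monotonicity (Lemma~\ref{L.RightInverse}(a)) rather than $\omega_2$ monotonicity.

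In summary: the overall strategy (containment plus a uniform random-sum asymptotic from \cite{Bal_Klupp_04}) mirrors the paper's, but you have substituted a theorem of the wrong form, omitted the subexponentiality argument that the paper needs for bounded $n$, and introduced an incorrect auxiliary claim about $Q(\theta x)/Q(x)$ that happily turns out to be unnecessary.
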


\begin{proof}
Recall that $\omega_2(x) = x^2/(Q(x) \vee 1)^2$ and $\omega_2^{-1}(x) = \inf\{ u \geq 0: x \leq \omega_2(u) \}$. By Lemma \ref{L.RightInverse}, $\omega_2^{-1}$ is non decreasing, $\omega_2(\omega_2^{-1}(x)) = x$ and $\omega_2^{-1}(\omega_2(x)) \leq x$. Let $t_n = \omega_2^{-1}((\mu \wedge 1)n)/2$, and note that
\begin{align*}
\limsup_{n \to \infty} \sqrt{n} \, \frac{Q(t_n)}{t_n} &\leq \limsup_{n \to \infty} \sqrt{n} \, \frac{Q(\omega_2^{-1}((\mu\wedge1)n)/2)}{ \omega_2^{-1}((\mu\wedge1)n)/2} \leq \frac{2}{(\mu\wedge1)^{1/2}}  \limsup_{s \to \infty} \sqrt{s} \, \frac{Q(\omega_2^{-1}(s))}{ \omega_2^{-1}(s)} \\
&= \frac{2}{(\mu\wedge1)^{1/2}} \limsup_{n \to \infty} \frac{\sqrt{s}}{\{\omega_2(\omega_2^{-1}(s)) \}^{1/2}} = \frac{2}{(\mu\wedge1)^{1/2}}. 
\end{align*}
Then by Theorem 3.1 in \cite{Bal_Klupp_04}, 
$$\lim_{n \to \infty} \sup_{t \geq t_n} \left| \frac{P(S_n - \mu n > t)}{n \overline{F}(t)} - 1  \right| = 0.$$
Next, we will show that for $n \leq K_r(x)$ we have $x-\mu n \geq t_n$. 

First, when $0 \leq r < 1/2$ we have $K_r(x) = \lfloor (x-\omega_2^{-1}(x))/\mu \rfloor$, so $n \leq K_r(x)$ implies 
$$x-\mu n \geq x-\mu K_r(x) \geq \omega_2^{-1}(x) \geq \omega_2^{-1}(\mu K_r(x)) \geq t_{K_r(x)} \geq t_n.$$
Similarly, when $1/2 \leq r < 1$ and $K_r(x) = \lfloor \min\{ \omega_2(x), x/(2\mu) \} \rfloor$, we have that $n \leq K_r(x)$ implies 
\begin{align*}
x-\mu n \geq x-\mu K_r(x) &\geq \max\{ x- \mu\omega_2(x), x/2\} \geq \omega_2^{-1}(\omega_2(x)) /2 \\
&\geq \omega_2^{-1}(K_r(x))/2 \geq   t_{K_r(x)} \geq t_n.
\end{align*}

These observations, combined with the fact that the subexponentiality of $F$ implies that $P(S_n > x) = n \overline{F}(x) (1+o(1))$ as $x \to \infty$ uniformly for $1 \leq n \leq a(x)$ for some $a(x) \to \infty$ completes the proof.  
\end{proof}

Combining Lemmas \ref{L.HeavyTail_region} and \ref{L.UglyTail} gives the following result. 

\begin{prop} \label{P.UglyApprox}
Define $S_\kappa(\rho,x)$ according to \eqref{eq:S_rho_x} and suppose $Q$ satisfies Assumption~\ref{A.Hazard}, then,
$$\lim_{x \to \infty} \sup_{0 < \rho < 1} \left| \frac{P(W_\infty(\rho) > x)}{S_\kappa(\rho,x)} - 1 \right| = 0.$$
\end{prop}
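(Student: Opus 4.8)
The plan is to decompose the Pollaczek--Khintchine sum \eqref{eq:Poll-Khin} according to the threshold $K_r(x)$ and match each block against the corresponding block of $S_\kappa(\rho,x)$, using Lemma \ref{L.HeavyTail_region} on the low-index block $1\le n\le K_r(x)$ and Lemma \ref{L.UglyTail} on the high-index block $n>K_r(x)$. Concretely, write
\begin{align*}
P(W_\infty(\rho)>x) &= \sum_{n=1}^{K_r(x)} (1-\rho)\rho^n P(S_n>x) + \sum_{n=K_r(x)+1}^\infty (1-\rho)\rho^n P(S_n>x),
\end{align*}
(the $n=0$ term vanishes since $P(S_0>x)=0$ for $x>0$), and similarly split $S_\kappa(\rho,x)$ into its first term and its remaining two terms, noting that the latter two are exactly $\sum_{n>K_r(x)}(1-\rho)\rho^n B_\kappa(x,n)$ with $B_\kappa$ as in Lemma \ref{L.UglyTail}. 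The key point is that a uniform-in-$n$ multiplicative relative error $P(S_n>x)=a_n(x)(1+\varepsilon_n(x))$ with $\sup_n|\varepsilon_n(x)|\to0$ transfers directly to any nonnegative-weighted sum: for the low block, $|\sum_n (1-\rho)\rho^n P(S_n>x) - \sum_n (1-\rho)\rho^n n\overline F(x-n\mu)| \le \sup_{1\le n\le K_r(x)}|\varepsilon_n(x)| \cdot \sum_n (1-\rho)\rho^n n\overline F(x-n\mu)$, and likewise for the high block with $B_\kappa(x,n)\ge0$.

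The main steps, in order, are: (1) observe $B_\kappa(x,n)\ge0$ and $n\overline F(x-n\mu)\ge0$ so that all three blocks of $S_\kappa(\rho,x)$ are nonnegative, hence $S_\kappa(\rho,x)\ge$ (first block of $S_\kappa$) $>0$ for $x$ large (the first block is strictly positive once $K_r(x)\ge1$, which holds for large $x$ by the discussion preceding Theorem \ref{T.SumApprox}); (2) apply Lemma \ref{L.HeavyTail_region} to bound the low-block discrepancy by $\delta_1(x)\cdot(\text{low block of }S_\kappa)$ with $\delta_1(x)=\sup_{1\le n\le K_r(x)}|P(S_n>x)/(n\overline F(x-n\mu))-1|\to0$; (3) apply Lemma \ref{L.UglyTail} to bound the high-block discrepancy by $\delta_2(x)\cdot(\text{sum of last two blocks of }S_\kappa)$ with $\delta_2(x)=\sup_{n\ge K_r(x)}|P(S_n>x)/B_\kappa(x,n)-1|\to0$ — here one should be slightly careful that Lemma \ref{L.UglyTail} is stated for $n\ge K_r(x)$ while we sum over $n\ge K_r(x)+1$, which is harmless; (4) add the two bounds: $|P(W_\infty(\rho)>x)-S_\kappa(\rho,x)| \le (\delta_1(x)\vee\delta_2(x))\,S_\kappa(\rho,x)$ for all $\rho\in(0,1)$, so $\sup_{0<\rho<1}|P(W_\infty(\rho)>x)/S_\kappa(\rho,x)-1|\le \delta_1(x)\vee\delta_2(x)\to0$.

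One technical wrinkle to address explicitly: the tail sums $\sum_{n>K_r(x)}(1-\rho)\rho^n B_\kappa(x,n)$ and $\sum_{n>K_r(x)}(1-\rho)\rho^n P(S_n>x)$ converge (the latter is dominated by $\sum(1-\rho)\rho^n=1$; for the former one needs that $B_\kappa(x,n)$ is bounded uniformly in $n$ for each fixed $x$, which follows since $\hat\pi_\kappa$ and $J$ are built from $\Phi$, exponentials of polynomials, and integrals of $\overline V$ against Gaussian-type kernels, all finite), so interchanging the finite relative-error estimate with the infinite sum is legitimate by dominated convergence / the elementary inequality for nonnegative series used above.

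\textbf{Main obstacle.} The genuine content is entirely in Lemmas \ref{L.HeavyTail_region} and \ref{L.UglyTail}, which are already established; the proposition itself is the easy ``glue'' step. The only place that requires a moment's thought is verifying that the relative-error bounds are uniform in $\rho$ as well as in $x$ — but this is automatic because the weights $(1-\rho)\rho^n$ are nonnegative, so the $\rho$-dependence factors out of the error estimate entirely, leaving a bound that depends only on $x$ through $\delta_1(x)\vee\delta_2(x)$. Thus I expect no real difficulty; the proof is three lines of bookkeeping once Lemmas \ref{L.HeavyTail_region} and \ref{L.UglyTail} are in hand.
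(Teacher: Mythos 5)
Your argument is correct and is exactly what the paper intends: the paper simply states that the proposition follows by ``combining Lemmas \ref{L.HeavyTail_region} and \ref{L.UglyTail},'' and your block decomposition of the Pollaczek--Khintchine sum at $K_r(x)$, with the uniform relative-error bounds transferred to the nonnegative-weighted sums, is the standard way to make that combination precise. The minor technical points you flag (the $n=0$ term, the $n\ge K_r(x)+1$ versus $n\ge K_r(x)$ mismatch, finiteness of the tail sum) are all handled correctly.
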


\bigskip

This first approximation for $P(W_\infty(\rho) > x)$ might not very useful in practice since it involves two integrals, those in the definition of $J(y,n)$, that are not in general closed-form, and two indicator functions that depend on the quantity $C_n$ (the solution to a certain optimization problem). The approximation given in Theorem~\ref{T.SumApprox} is more explicit, and thus more suitable for computations, both numerical and analytical.

%%%% Section SumApproxProof
\section{Proof of Theorem \ref{T.SumApprox}} \label{S.SumApproxProof}

The proof of Theorem \ref{T.SumApprox} is rather technical, so we divide into several lemmas, the first of which gives some more properties of the functions $b^{-1}$ and $\omega_1^{-1}$.

\begin{lem} \label{L.b_and_w}
Suppose $Q$ satisfies Assumption \ref{A.Hazard}. Let $\tilde Q$ and $b^{-1}$ be defined according to \eqref{eq:tildeQ_Def} and \eqref{eq:b_Def}, respectively. Then, 
\begin{enumerate}  
\item $\lim_{t\to\infty} Q(t)/ \omega_1^{-1}(t) = \lim_{t\to \infty} Q(t)/b^{-1}(t) = 0$, 
\item $\lim_{t\to \infty} \sqrt{t}/\omega_1^{-1}(t) = 0$. 
\end{enumerate}
\end{lem}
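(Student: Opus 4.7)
The plan is to dispose of (b) quickly via the crude lower bound on $\omega_1^{-1}$ that is already recorded just before Theorem \ref{T.SumApprox}, and then focus on the two limits in (a); the first of these is a short implicit-function computation, and the second is reduced to the first. For (b), the bound \eqref{eq:LowerBoundQ} forces $\omega_1(t) \leq t^2/(\beta \log t)$ for large $t$ and hence $\omega_1^{-1}(t) \geq \sqrt{(\beta/2)\,t\log t}$, giving $\sqrt{t}/\omega_1^{-1}(t) \leq \sqrt{2/(\beta \log t)} \to 0$.

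For the first limit in (a), I would fix $\delta \in (0, 1-r)$ and set $s = r + \delta \in (r,1)$. By Proposition 3.7 of \cite{Bal_Dal_Klupp_04}, $Q(u)/u^s$ is eventually nonincreasing, which yields both $Q(u) \leq C u^{s}$ and the scaling inequality $Q(t) \leq (t/u)^{s} Q(u)$ whenever $t \geq u$ are sufficiently large. Setting $u = \omega_1^{-1}(t)$ makes $u^2/Q(u) = t$ for large $t$, and the bound $Q(u) \leq Cu^{s}$ with $s<1$ shows $t/u = u/Q(u) \to \infty$, so in particular $t > u$ eventually. Substituting into the scaling inequality then gives
$$\frac{Q(t)}{u} \;\leq\; \frac{(t/u)^{s} Q(u)}{u} \;=\; \left(\frac{Q(u)}{u}\right)^{1-s} \;\to\; 0,$$
since $Q(u)/u \leq C u^{s-1} \to 0$ and $1-s > 0$.

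For the second limit in (a), I would invoke Lemma \ref{eq:StdToNonneg} to note that $\tilde Q$ satisfies the same structural conditions used above (with some $\tilde r \in [0,1)$), so repeating the previous paragraph with $(\tilde Q, b)$ in place of $(Q, \omega_1)$ yields $\tilde Q(t)/b^{-1}(t) \to 0$. To transfer this to $Q$, I would use the rearrangement $Q(t) = \tilde Q((t-\mu)/\sigma) + 2\log((t-\mu)/\sigma)$ that is immediate from \eqref{eq:tildeQ_Def} for $t > \mu$, and bound the two pieces separately. The logarithmic piece is negligible because the proof of Lemma \ref{eq:StdToNonneg} produces $\tilde Q(t) \geq (\beta-2)\log t$, which by the same computation used for (b) gives $b^{-1}(t) \geq c\sqrt{t\log t}$, whence $\log t/b^{-1}(t) \to 0$. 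For the main term I set $\tilde t = (t-\mu)/\sigma$ and split
$$\frac{\tilde Q(\tilde t)}{b^{-1}(t)} \;=\; \frac{\tilde Q(\tilde t)}{b^{-1}(\tilde t)} \cdot \frac{b^{-1}(\tilde t)}{b^{-1}(t)};$$
the first factor vanishes by what has just been shown, and the second is bounded by Lemma \ref{L.g_properties}(c.)--(d.) --- trivially by monotonicity when $\sigma \geq 1$, and by the scaling $b^{-1}(\tilde t) \leq b^{-1}(2t/\sigma) \leq (2/\sigma)^{1/(2-\tilde s)} b^{-1}(t)$ when $\sigma < 1$.

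The only mildly fiddly point I anticipate is this last $\tilde Q \to Q$ transfer, where one needs both the case split on $\sigma$ relative to $1$ and the scaling properties of $b^{-1}$ from Lemma \ref{L.g_properties}; everything else is a one-line consequence of the decreasing-ratio property of $Q$ combined with the implicit equation $\omega_1(\omega_1^{-1}(t)) = t$ (respectively $b(b^{-1}(t)) = t$).
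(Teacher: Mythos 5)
Your proof is correct, and parts (b) and the first limit in (a) are essentially the paper's own argument: the paper computes (b) directly as $\sqrt{\omega_1(\omega_1^{-1}(t))}/\omega_1^{-1}(t) = 1/\sqrt{Q(u)} \to 0$, which is interchangeable with your use of the recorded bound $\omega_1^{-1}(t) \geq \sqrt{(\beta/2)\,t\log t}$, and the first limit in (a) is verbatim the paper's substitution $u = \omega_1^{-1}(t)$ combined with Proposition 3.7 of \cite{Bal_Dal_Klupp_04}. The one place you take a genuinely different route is the transfer from $\tilde Q/b^{-1} \to 0$ to $Q/b^{-1} \to 0$: you decompose $Q(t) = \tilde Q((t-\mu)/\sigma) + 2\log((t-\mu)/\sigma)$ and then control the mismatch of arguments via the scaling inequalities of Lemma \ref{L.g_properties} (c.)--(d.), with a case split on $\sigma \lessgtr 1$. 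The paper avoids the shifted argument altogether: it shows $\limsup_{t\to\infty} Q(t)/\tilde Q(t) < \infty$ (using $\tilde Q(t) \geq Q(\sigma t) - 2\log t$, Proposition 3.7, and \eqref{eq:LowerBoundQ} to keep the denominator bounded away from zero relative to $Q$), and then simply writes $Q(t)/b^{-1}(t) \leq C\,\tilde Q(t)/b^{-1}(t) \to 0$. Both transfers are valid; the paper's is shorter and sidesteps the case analysis on $\sigma$ and the extra lower bound $b^{-1}(t) \geq c\sqrt{t\log t}$, while yours makes the exact algebraic relation between $Q$ and $\tilde Q$ explicit at the cost of a little more bookkeeping.
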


\begin{proof}
To show the first limit in (a.) use Proposition 3.7 in \cite{Bal_Dal_Klupp_04} with some $r < s < 1$ as follows, 
\begin{align*}
\lim_{t \to \infty} \frac{Q(t)}{\omega_1^{-1}(t)} &= \lim_{t \to \infty} \frac{Q(\omega_1(\omega_1^{-1}(t)))}{\omega_1^{-1}(t)} = \lim_{u \to \infty} \frac{Q(\frac{u^2}{Q(u)})}{u} \\
&\leq \lim_{u \to \infty} \frac{ (u/Q(u))^{s} Q(u)}{u} = \lim_{u\to \infty} \left( \frac{Q(u)}{u} \right)^{1-s} = 0.
\end{align*}
For the second limit we first note that the same arguments used above give $\lim_{t \to \infty} \tilde Q(t)/b^{-1}(t)  = 0$, so all we need to show is that $\limsup_{t \to \infty} Q(t)/\tilde Q(t) < \infty$. That this is the case follows from   
$$0 \leq \limsup_{t \to \infty} \frac{Q(t)}{\tilde Q(t)} \leq  \limsup_{t\to \infty} \frac{Q(t)}{Q(\sigma t)-2\log t} \leq \limsup_{u \to \infty} \frac{(\sigma^{-1} \vee 1)^s Q(u)}{Q(u) - 2\log u + 2\log\sigma} = \limsup_{u \to \infty} \frac{C}{1-2\log u/Q(u)}$$
and \eqref{eq:LowerBoundQ}, which gives $Q(u)/\log u \geq \beta > a(r) \geq 2$ for large $u$. 

For part (b.)
\begin{align*}
\lim_{t \to \infty} \frac{\sqrt{t}}{\omega_1^{-1}(t)} &= \lim_{t \to \infty} \frac{\sqrt{\omega_1(\omega_1^{-1}(t))}}{\omega_1^{-1}(t)} = \lim_{u \to \infty} \frac{\sqrt{u^2/Q(u)}}{u} = \lim_{u \to \infty} \frac{1}{\sqrt{Q(u)}} = 0. 
\end{align*}
\end{proof}

Next define $Z_\kappa(\rho,x)$ according to \eqref{eq:Z_Def_2} and \eqref{eq:Z_Def_big2}, and $S_\kappa(\rho,x)$ according to \eqref{eq:S_rho_x}. Let
\begin{align*}
E_1(\rho,x) &= \left| \sum_{n=K_r(x)+1}^{\infty} (1-\rho) \rho^n  \hat\pi_\kappa(x,n) \left\{ \Indicator(y \leq (1+\epsilon)C_n) -  \Indicator( n > M(x))  \right\} \right|, \\
E_2(\rho,x) &= \begin{cases}
\left| \sum_{n=M(x)+1}^{\infty} (1-\rho) \rho^n \hat\pi_\kappa(x,n) - E\left[ \rho^{a(x,Z)} \Indicator\left(\sigma Z \leq \sqrt{\mu} \omega_1^{-1}(x)/\sqrt{x} \right)  \right] \right|, & \kappa = 2, \\
\left| \sum_{n=N(x)+1}^{\infty} (1-\rho) \rho^n \hat\pi_\kappa(x,n) - E\left[ \rho^{a(x,Z)} \Indicator\left(\sigma Z \leq \sqrt{\mu \log x} \right)   \right] \right|, & \kappa > 2, 
\end{cases}
\\
E_3(\rho,x) &= \sum_{n=K_r(x)+1}^{\infty} (1-\rho) \rho^n  J(y,n) \Indicator(y > (1-\epsilon) C_n).
\end{align*}
Then,
$$|S_\kappa(\rho,x) - Z_\kappa(\rho,x)| \leq E_1(\rho,x) + E_2(\rho,x) + E_3(\rho,x).$$
We will split the proof of Theorem \ref{T.SumApprox} into three propositions, each of them showing that $E_i(\rho,x) = o(Z_\kappa(\rho,x))$ as $x \to \infty$ uniformly for $0 < \rho < 1$, and some auxiliary lemmas. We start by giving a result that provides lower bounds for $Z_\kappa(\rho,x)$.

\bigskip

\begin{lem} \label{L.LowerBound}
Fix $c > 0$ and let $\hat \rho(x) = e^{-c\mu Q(x)/x}$. Then, for any $0 < \rho \leq \hat\rho(x)$, 
$$Z_\kappa(\rho,x) \geq \frac{C \rho}{1-\rho} \overline{F}(x),$$
while for $\hat\rho(x) \leq \rho < 1$,
$$Z_\kappa(\rho,x) \geq C e^{-\frac{x}{\mu} \Lambda_\rho(u(\rho))},$$
where $\Lambda_\rho(u(\rho))$ was defined in Lemma \ref{L.ustar}.
\end{lem}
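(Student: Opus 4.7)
I treat the two regimes of $\rho$ separately; since every term in $Z_\kappa(\rho,x)$ is nonnegative, it suffices in each regime to isolate a single piece of $Z_\kappa(\rho,x)$ that already delivers the claimed bound up to a universal constant. The small-traffic case will use only the heavy-tail sum; the heavy-traffic case will use either the Gaussian expectation at the end of $Z_\kappa(\rho,x)$ or (for $\kappa>2$) the middle Cram\'er sum.

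\textbf{Small traffic.} For $0<\rho\le\hat\rho(x)$ I keep only the heavy-tail sum. Using $\overline F(x-n\mu)\ge\overline F(x)$ reduces the job to the elementary identity
\[
(1-\rho)\sum_{n=1}^{K} n\rho^n \;=\; \frac{\rho}{1-\rho} - R(K,\rho),\qquad R(K,\rho)\;\le\;\frac{(K+2)\rho^{K+1}}{1-\rho},
\]
so it is enough to show $K_r(x)(1-\rho)\to\infty$ uniformly for $\rho\le\hat\rho(x)$. Because $1-\hat\rho(x)\sim c\mu Q(x)/x$, this is a short case analysis on the definition of $K_r$: for $r\in[0,1/2)$ one has $K_r(x)\sim x/\mu$, giving a product of order $Q(x)$; for $r\in[1/2,1)$ one treats $\omega_2(x)\le x/(2\mu)$ (product of order $\mu x/Q(x)$) and $\omega_2(x)>x/(2\mu)$ (product of order $Q(x)$) separately. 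In every branch the remainder $R(K_r(x),\rho)$ is $o(\rho/(1-\rho))$ uniformly, and the first claim follows.

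\textbf{Heavy traffic.} For $\hat\rho(x)\le\rho<1$ with $\kappa=2$ I use the Gaussian term. The closed form in the remark after Theorem~\ref{T.SumApprox} with $T=\omega_1^{-1}(x)/\sqrt{x}$ gives
\[
E\!\left[\rho^{a(x,Z)}\Indicator\!\bigl(\sigma Z\le\sqrt{\mu}\,T\bigr)\right] \;=\; e^{(x/\mu)[\log\rho+(\sigma^2/(2\mu^2))(\log\rho)^2]}\,\Phi\!\left(\tfrac{\sqrt{\mu}\,\omega_1^{-1}(x)}{\sigma\sqrt{x}}+\tfrac{\sigma\sqrt{x}\,\log\rho}{\mu^{3/2}}\right),
\]
and when $\kappa=2$ the exponent is precisely $(x/\mu)\Lambda_\rho(u(\rho))$ by Lemma~\ref{L.ustar}. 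So the task reduces to bounding the $\Phi$-factor away from zero: its first argument tends to $+\infty$ because $\omega_1^{-1}(x)\ge\sqrt{(\beta/2)x\log x}$, while its second argument has absolute value at most $c\sigma Q(x)/(\mu^{1/2}\sqrt{x})$ since $|\log\rho|\le|\log\hat\rho(x)|=c\mu Q(x)/x$, and by Lemma~\ref{L.b_and_w}(a), $Q(x)/\omega_1^{-1}(x)\to 0$, so the positive term dominates. For $\kappa>2$ the exponent above captures only the first two terms of $\Lambda_\rho(u(\rho))$, missing an $O(|\log\rho|^3)$ correction from Lemma~\ref{L.ustar}, so I split on $|\log\rho|$. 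If $|\log\rho|\le\sqrt{\log x/x}$, then $x|\log\rho|^3/\mu\to 0$, the cubic correction is absorbed into a constant, and the preceding argument with $T=\sqrt{\mu\log x}$ still works. Otherwise $u(\rho)\ge c\sqrt{\log x/x}$ (from $u(\rho)\sim -\sigma^2\log\rho/\mu^2$) and $u(\rho)\le\omega_1^{-1}(x)/x$ (from $|\log\rho|\le c\mu Q(x)/x$ combined with Lemma~\ref{L.b_and_w}(a)), so the Laplace maximizer $n^\star:=\lfloor(1-u(\rho))x/\mu\rfloor$ sits in $(M(x),N(x)]$. I then lower-bound the middle Cram\'er sum of $Z_\kappa(\rho,x)$ by its contribution from a Gaussian-width block of $\asymp\sqrt{x}$ consecutive indices centered at $n^\star$: by the construction of $\Lambda_\rho$ from $(1-t)Q_\kappa(\mu t/(\sigma(1-t)))$, each term is $\asymp x^{-1/2}\,e^{(x/\mu)\Lambda_\rho(u(\rho))}$ (the prefactor $(1-\rho)\sigma\sqrt{x}/((x-n\mu)\sqrt{2\pi\mu})$ being $\asymp 1/\sqrt{x}$ thanks to $1-\rho\asymp u(\rho)$ and $x-n\mu\asymp xu(\rho)$), while $|\Lambda_\rho''(u(\rho))|\asymp\mu^2/\sigma^2$ sets the block width at $\asymp\sqrt{x}$; summing yields the required constant multiple of $e^{(x/\mu)\Lambda_\rho(u(\rho))}$.

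\textbf{Main obstacle.} The nontrivial step is the $\kappa>2$ subcase with $|\log\rho|>\sqrt{\log x/x}$: a single summand from the middle Cram\'er sum only contributes $x^{-1/2}e^{(x/\mu)\Lambda_\rho(u(\rho))}$, so a Laplace/Gaussian-width aggregation is required, and one has to check simultaneously that $n^\star$ lies inside $(M(x),N(x)]$ uniformly in $\rho$, using $Q(x)\ll\omega_1^{-1}(x)$ on one side and $u(\rho)\ge c\sqrt{\log x/x}$ on the other. The remaining three subcases are direct applications of Lemmas~\ref{L.ustar}, \ref{L.b_and_w}, and the closed form of the Gaussian term.
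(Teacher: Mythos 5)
Your overall strategy coincides with the paper's: for $0<\rho\le\hat\rho(x)$ keep only the heavy-tail sum and evaluate the geometric series, and for $\hat\rho(x)\le\rho<1$ use the closed form of the Gaussian term (splitting, when $\kappa>2$, on the size of $|\log\rho|$ and falling back on a Laplace-type aggregation of the middle Cram\'er sum when $|\log\rho|$ is large). Two calibration points in your write-up do not quite close as stated, though both are repairable. First, in the small-traffic case the crude bound $R(K,\rho)\le (K+2)\rho^{K+1}/(1-\rho)$ combined with $K_r(x)(1-\rho)\to\infty$ is not sufficient: dividing by $\rho/(1-\rho)$ leaves $(K+2)\rho^{K}$, and for $r\in[0,1/2)$ with $K\asymp x/\mu$ and $\rho=\hat\rho(x)$ this is of order $x\,e^{-cQ(x)}$, which can be as large as $x^{1-c\beta}$ and diverges when $c\beta\le 1$ --- and the lemma must hold for every fixed $c>0$. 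You need the exact remainder $(K+1)\rho^{K+1}+\rho^{K+2}/(1-\rho)$, whose ratio to $\rho/(1-\rho)$ is at most $2K(1-\rho)e^{-K(1-\rho)}+\rho^{K+1}\to 0$ once $K(1-\rho)\to\infty$; the paper instead sidesteps the issue entirely by truncating the sum at the smaller index $J(x)=\lfloor x/\sqrt{Q(x)}\rfloor$, which makes the estimate uniform without a case analysis on $r$.

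Second, your threshold $|\log\rho|\le\sqrt{\log x/x}$ for the $\kappa>2$ split is off by the factor $\mu^2/\sigma^2$. If $\sigma^2>\mu^2$, then on the ``small $|\log\rho|$'' branch the argument of $\Phi$, namely $\sqrt{\mu\log x}/\sigma+\sigma\sqrt{x}\log\rho/\mu^{3/2}$, can tend to $-\infty$, so the $\Phi$-factor is not bounded below by a constant; if instead $\sigma^2<\mu^2$, then on the other branch $u(\rho)\approx(\sigma^2/\mu^2)|\log\rho|$ can fall below $\sqrt{\log x/x}$, pushing your maximizer $n^\star$ past $N(x)$ and outside the range of the Cram\'er sum. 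The paper's cutoff $\tilde\rho(x)=e^{-\mu^2\sqrt{\log x}/(\sigma^2\sqrt{x})}$ is calibrated exactly so that both branches work simultaneously. With these two adjustments your argument is complete and is essentially the published proof.
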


\begin{proof}
Let $J(x) = \lfloor x/\sqrt{Q(x)}  \rfloor \leq K_r(x)$ and note that
\begin{align*}
Z_\kappa(\rho,x) &\geq \sum_{n=1}^{J(x)} (1-\rho) \rho^n n \overline{F}(x-n\mu)  \\
&\geq (1-\rho) \overline{F}(x) \sum_{n=1}^{J(x)} n \rho^n \\
&= \overline{F}(x) \frac{\rho}{1-\rho} \left(  1 - \rho^{J(x)} - (1-\rho) J(x) \rho^{J(x)}  \right).
\end{align*}
The first statement follows from the observation that for $0 < \rho \leq \hat\rho(x)$ we have
$$\rho^{J(x)} \leq e^{- \frac{c\mu Q(x)}{x} J(x)} = e^{- c\mu \sqrt{Q(x)} + o(1)} \to 0.$$

For the second statement consider first the case $\kappa = 2$, for which $e^{\frac{x}{\mu} \Lambda_\rho(u(\rho))} = e^{\frac{x}{\mu} \log\rho + \frac{\sigma^2 x(\log\rho)^2}{2 \mu^{3}}}$ and
\begin{align*}
Z_\kappa(\rho,x) &\geq E\left[ \rho^{a(x,Z)} \Indicator(\sigma Z \leq \sqrt{\mu}\omega_1^{-1}(x)/\sqrt{x} ) \right] \\
&=  e^{\frac{x}{\mu} \log\rho + \frac{\sigma^2 x(\log\rho)^2}{2 \mu^{3}}} \Phi\left( \frac{\sqrt{\mu}\omega_1^{-1}(x)}{\sigma\sqrt{x}} + \frac{\sigma  \sqrt{x}}{\mu^{3/2}} \log\rho \right) \\
&\geq e^{\frac{x}{\mu} \Lambda_\rho(u(\rho))} \Phi\left( \frac{\sqrt{\mu}\omega_1^{-1}(x)}{\sigma\sqrt{x}} - \frac{\sigma cQ(x)}{\sqrt{\mu x}}  \right) = e^{\frac{x}{\mu} \Lambda_\rho(u(\rho))} (1 +o(1))
\end{align*}
as $x \to \infty$, for all $\hat\rho(x) \leq \rho < 1$ (since $Q(x)/\omega_1^{-1}(x) \to 0$ by Lemma \ref{L.b_and_w} (a.)). For $\kappa > 2$ we split the interval $[\hat\rho(x), 1)$ into two parts as follows. Define $\tilde \rho(x) = e^{- \frac{\mu^2 \sqrt{\log x}}{\sigma^2 \sqrt{x}}  }$. Then, for $\tilde \rho(x) \vee \hat\rho(x) \leq \rho < 1$,
$$Z_\kappa(\rho,x) \geq E\left[ \rho^{a(x,Z)} \Indicator\left(Z \leq \sqrt{\mu \log x}/\sigma \right) \right] \geq \Phi(0) e^{\frac{x}{\mu} \log\rho + \frac{\sigma^2}{2\mu^3} x(\log\rho)^2} \geq C e^{\frac{x}{\mu} \Lambda_\rho(u(\rho))}.$$
For the interval $[\hat\rho(x), \tilde \rho(x) \vee \hat\rho(x))$ (assuming $\tilde\rho(x) > \hat\rho(x)$), let $u_n = (x - n\mu)/x$ and use Lemma \ref{L.Lambda} to obtain
$$Z_\kappa(\rho,x) \geq \frac{C(1-\rho) }{\sqrt{\mu x}} \sum_{n=M(x)+1}^{N(x)} \frac{e^{\frac{x}{\mu} \Lambda_\rho(u_n)}}{u_n} \geq C (1-\rho) \sqrt{x} \int_{u_{N(x)-1}}^{u_{M(x)}} \frac{e^{\frac{x}{\mu} \Lambda_\rho(u)}}{u} du.$$
By Lemma \ref{L.ustar}, $\Lambda_\rho$ is concave on $[0, u_{M(x)}]$, and its maximizer, $u(\rho)$, satisfies
$$u(\rho) = -\frac{\sigma^2}{\mu^2}\log\rho + O(|\log\rho|^2) , \qquad \Lambda_\rho(u(\rho)) = \log\rho + \frac{\sigma^2}{2\mu^2} (\log\rho)^2 + O(|\log\rho|^3).$$
Also, the derivatives of $\Lambda_\rho$ satisfy
$$\Lambda_\rho'(t) = -\log\rho - \frac{\mu^2}{\sigma^2} t + O(t^2) \quad \text{and} \quad  \Lambda_\rho'' (t) = - \frac{\mu^2}{\sigma^2} + O(t).$$
Then, for some $\xi_t$ between $t$ and $u(\rho)$ and some constant $\zeta > \mu/\sigma^2$,
$$\Lambda_\rho(t) = \Lambda_\rho(u(\rho)) + \frac{\Lambda_\rho''(\xi_t)}{2} (t-u(\rho))^2 \geq \Lambda_\rho(u(\rho)) - \frac{\zeta\mu}{2}(t-u(\rho))^2.$$
Note that for $\rho \leq \tilde \rho(x)$ we have $u(\rho) \geq \sqrt{\log x/x} + O(\log x/x)$. Therefore, for any $0 < \delta < 1$ and $x$ sufficiently large,
\begin{align*}
&(1-\rho) \sqrt{x} \int_{u_{N(x)-1}}^{u_{M(x)}} \frac{e^{\frac{x}{\mu} \Lambda_\rho(u)}}{u} \,  du \\
&\geq  (1-\rho) \sqrt{x} e^{\frac{x}{\mu} \Lambda_\rho(u(\rho))} \int_{\sqrt{\zeta x}(u_{N(x)-1}-u(\rho))}^{\sqrt{\zeta x} \delta u(\rho)} \frac{e^{- z^2/2}}{z + \sqrt{\zeta x} u(\rho)} dz  \\
&\geq C e^{\frac{x}{\mu} \Lambda_\rho(u(\rho))} \left( \Phi\left(\sqrt{\zeta x} \delta u(\rho)) \right) - \Phi\left(-\sqrt{\zeta x}(u(\rho)-u_{N(x)-1} ) \right)    \right) \\
&\geq C e^{\frac{x}{\mu} \Lambda_\rho(u(\rho))} \qquad \left( \text{since $\sqrt{x}u(\rho) \to \infty$ for $\rho \leq \tilde\rho(x)$} \right).
\end{align*}
\end{proof}

The next lemma will be useful in showing the uniformity in $0 < \rho < 1$ of our bounds.

\begin{lem} \label{L.UniformBound}
Let $\alpha(n,x)$ be any function that does not depend on $\rho$. Then, for any $l(x) \geq 4\mu^{-1} x/Q(x)$ and $m(x) \leq \frac{x}{\mu}$, we have
$$\sup_{0 < \rho < 1} \frac{1}{Z_\kappa(\rho,x)} \sum_{n = l(x)}^{m(x)} (1-\rho) \rho^n \alpha(n,x) \leq \frac{C Q(x)}{x }   \sum_{n=l(x)}^{m(x)} e^{Q(x) -  \frac{\mu n Q(x)}{x} }  \alpha(n,x) ,$$
for sufficiently large $x$. 
\end{lem}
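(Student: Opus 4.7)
The plan is to split the supremum at the threshold $\hat\rho(x) = e^{-\mu Q(x)/x}$ and apply the two lower bounds of Lemma \ref{L.LowerBound} (with constant $c=1$) separately on each piece. Since $\alpha(n,x)$ does not depend on $\rho$, it is enough to prove the pointwise inequality
\[
\frac{(1-\rho)\rho^n}{Z_\kappa(\rho,x)} \leq \frac{C Q(x)}{x}\, e^{Q(x) - \mu n Q(x)/x}
\]
for every $n \in [l(x), m(x)]$, uniformly in $\rho \in (0,1)$, and then sum this against $\alpha(n,x)$.

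\textbf{Regime 1 ($0 < \rho \leq \hat\rho(x)$).} The first lower bound of Lemma \ref{L.LowerBound} gives $Z_\kappa(\rho,x) \geq C\rho e^{-Q(x)}/(1-\rho)$, so the pointwise claim reduces to
\[
g(\rho) := (1-\rho)^2 \rho^{n-1} \leq \frac{CQ(x)}{x}\, e^{-\mu n Q(x)/x}.
\]
A direct differentiation shows that $g$ is increasing on $(0, (n-1)/(n+1)]$. The elementary bound $1 - e^{-y} \geq y/2$ for small $y$, combined with the hypothesis $n \geq l(x) \geq 4x/(\mu Q(x))$, forces $(n-1)/(n+1) \geq \hat\rho(x)$ once $x$ is large enough. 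Hence $g$ attains its maximum over $(0,\hat\rho(x)]$ at the endpoint $\rho = \hat\rho(x)$, and the estimates $1 - \hat\rho(x) \leq \mu Q(x)/x$ and $\hat\rho(x)^{n-1} \leq e^{\mu Q(x)/x} e^{-\mu n Q(x)/x}$ yield $g(\hat\rho(x)) \leq C(Q(x)/x)^2 e^{-\mu n Q(x)/x}$, which is actually stronger than required.

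\textbf{Regime 2 ($\hat\rho(x) \leq \rho < 1$).} The second lower bound of Lemma \ref{L.LowerBound} reads $Z_\kappa(\rho,x) \geq C e^{(x/\mu)\Lambda_\rho(u(\rho))}$, and the trivial bound $1-\rho \leq 1 - \hat\rho(x) \leq \mu Q(x)/x$ already supplies the desired prefactor. The pointwise claim thus reduces to
\[
n\log\rho - \frac{x}{\mu}\Lambda_\rho(u(\rho)) \leq Q(x) - \mu n Q(x)/x.
\]
Writing $s := -\log\rho \in [0, \mu Q(x)/x]$ and invoking the expansion $\Lambda_\rho(u(\rho)) \geq \log\rho + \frac{\sigma^2}{2\mu^2}(\log\rho)^2 - C|\log\rho|^3$ from Lemma \ref{L.ustar}, the left-hand side is bounded by
\[
\left(\frac{x}{\mu} - n\right) s - \frac{\sigma^2 x s^2}{2\mu^3} + \frac{C x s^3}{\mu}.
\]
Since $n \leq m(x) \leq x/\mu$ and $s \leq \mu Q(x)/x$, the first summand is at most $\mu(x/\mu - n) Q(x)/x = Q(x) - \mu n Q(x)/x$; and since $s \to 0$ uniformly as $x \to \infty$, the quadratic term dominates the cubic remainder, so their sum is nonpositive for large $x$.

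The main subtlety is the monotonicity argument in Regime 1: the hypothesis $l(x) \geq 4x/(\mu Q(x))$ is calibrated exactly so that the interior maximizer $(n-1)/(n+1)$ of $g$ lies to the right of $\hat\rho(x)$, which is what shifts the supremum of $g$ on $(0,\hat\rho(x)]$ to the endpoint and produces the required $Q(x)/x$ factor. Everything else reduces to bookkeeping with the expansions already stated in Lemmas \ref{L.LowerBound} and \ref{L.ustar}.
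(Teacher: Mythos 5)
Your proposal is correct and follows essentially the same route as the paper: split at $\hat\rho(x)=e^{-\mu Q(x)/x}$, use the two lower bounds of Lemma \ref{L.LowerBound}, exploit monotonicity of $(1-\rho)^2\rho^{n-1}$ on $(0,\hat\rho(x)]$ (which is exactly what the hypothesis $l(x)\geq 4\mu^{-1}x/Q(x)$ guarantees), and in the near-heavy-traffic regime combine $1-\rho\leq C Q(x)/x$ with the bound $(x/\mu-n)|\log\rho|\leq Q(x)-\mu nQ(x)/x$. The only cosmetic difference is that the paper absorbs the cubic error of $\Lambda_\rho(u(\rho))$ into a $(1-\epsilon)$-deflated quadratic coefficient and then discards the quadratic, whereas you keep the expansion and let the quadratic dominate the cubic; both arguments (like the paper's) tacitly use $\alpha(n,x)\geq 0$ to pass the supremum inside the sum.
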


\begin{proof}
Define $\hat\rho(x) = e^{-\mu Q(x)/x}$.  By Lemma \ref{L.LowerBound}, we have that for $0 < \rho \leq \hat\rho(x)$, 
$$\sup_{0 < \rho \leq \hat\rho(x)} \frac{1}{Z_\kappa(\rho,x)} \sum_{n=l(x)}^{m(x)} (1-\rho) \rho^n \alpha(n,x)  \leq  \sup_{0 < \rho \leq \hat \rho(x)} C  \sum_{n=l(x)}^{m(x)} (1-\rho)^2 \rho^{n-1} \frac{\alpha(n,x)}{\overline{F}(x)} .$$
Define $h_n(\rho) = (1-\rho)^2 \rho^{n-1}$ and compute $h_n'(\rho) = (1-\rho)\rho^{n-2}(n(1-\rho) - 1-\rho)$. Note that for $\rho \in (0, \hat\rho(x)]$ we have
$$n(1-\rho)-1-\rho \geq n(1-\hat\rho(x)) -1 - \hat\rho(x),$$
so $h_n'(\rho) \geq 0$ on $(0, \hat\rho(x)]$ for all $n \geq (1+\hat\rho(x))/(1-\hat\rho(x))$ (note that $(1+\hat\rho(x))/(1-\hat\rho(x)) \sim 2\mu^{-1} x/Q(x)$ as $x \to \infty$). Therefore,
\begin{align*}
\sup_{0 < \rho \leq \hat\rho(x)}  \sum_{n=l(x)}^{m(x)} (1-\rho)^2 \rho^{n-1} \frac{\alpha(n,x)}{\overline{F}(x)} &=  \sum_{n=l(x)}^{m(x)} (1-\hat\rho(x))^2 \hat\rho(x)^{n-1} e^{Q(x)} \alpha(n,x) \\
&\leq \frac{C Q(x)^2}{x^2} \sum_{n=l(x)}^{m(x)} e^{Q(x) - \frac{\mu n Q(x)}{x}} \alpha(n,x).
\end{align*}
For the range $\hat\rho(x) \leq \rho < 1$ fix $\epsilon \in (0,1)$ and use Lemma \ref{L.LowerBound} again to obtain
\begin{align*}
\sup_{\hat\rho(x) \leq \rho < 1} \frac{1}{Z_\kappa(\rho,x)} \sum_{n=l(x)}^{m(x)} (1-\rho) \rho^n \alpha(n,x) &\leq \sup_{\hat\rho(x) \leq \rho < 1} \frac{C}{e^{\frac{x}{\mu} \log \rho + (1-\epsilon) \frac{\sigma^2}{2\mu^3} x (\log\rho)^2}} \sum_{n=l(x)}^{m(x)} (1-\rho) \rho^n \alpha(n,x) \\
&\leq \frac{C Q(x)}{x}  \sum_{n=l(x)}^{m(x)} \sup_{\hat\rho(x) \leq \rho < 1} e^{-\frac{x}{\mu} \log \rho + n\log\rho}  \alpha(n,x) \\
&= \frac{C Q(x)}{x}  \sum_{n=l(x)}^{m(x)}  e^{-\left(\frac{x}{\mu} - n\right) \log \hat\rho(x) }  \alpha(n,x) \qquad \left(\text{for all } n \leq x/\mu \right) \\
&= \frac{C Q(x)}{x}  \sum_{n=l(x)}^{m(x)}  e^{\left(\frac{x}{\mu} - n\right) \frac{\mu Q(x)}{x}  }  \alpha(n,x).
\end{align*}
\end{proof}

%%% First error

\begin{prop} \label{P.E1}
Under the assumptions of Theorem \ref{T.SumApprox}, 
$$\lim_{x \to \infty} \sup_{0 < \rho < 1} \frac{E_1(\rho,x)}{Z_\kappa(\rho,x)} = 0.$$
\end{prop}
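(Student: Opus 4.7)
The plan is to decompose the index set $\{n > K_r(x)\}$ into a ``bad range'' $B_x$ on which the two indicators disagree (and which thus carries all the contribution to $E_1$) plus a complement on which they agree. By Lemma~\ref{L.UglyTail}, $C_n \in [\gamma_1 b^{-1}(\mu n), \gamma_2 b^{-1}(\mu n)]$, and $\omega_1^{-1}(x)$ and $b^{-1}(x)$ are comparable up to multiplicative constants (this follows from $\tilde Q(t) \sim Q(\sigma t)$ together with Proposition~3.7 of \cite{Bal_Dal_Klupp_04}). Consequently $B_x$ consists of those integers $n$ for which $x-n\mu$ lies between $\omega_1^{-1}(x)$ and $\sigma(1+\epsilon) C_n$; in particular, $x-n\mu \asymp b^{-1}(x)$ uniformly on $B_x$, and $|B_x| = O(b^{-1}(x)/\mu)$.

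With $B_x$ identified, I apply Lemma~\ref{L.UniformBound} with $\alpha(n,x) = \hat\pi_\kappa(x,n)\, \Indicator(n \in B_x)$; note that $\min B_x \geq (x - C b^{-1}(x))/\mu \geq 4x/(\mu Q(x))$ for large $x$ since $Q(x)\to\infty$ and $b^{-1}(x) = o(x)$. This yields
$$\sup_{0<\rho<1} \frac{E_1(\rho,x)}{Z_\kappa(\rho,x)} \leq \frac{C Q(x)}{x} \sum_{n\in B_x} e^{Q(x) - \mu n Q(x)/x}\, \hat\pi_\kappa(x,n),$$
where on $B_x$ the exponential factor is at most $e^{C b^{-1}(x) Q(x)/x}$. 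The key estimate is a Gaussian-like upper bound on $\hat\pi_\kappa$. For $\kappa = 2$, the inequality $\Phi(-z) \leq z^{-1}\varphi(z)$ applied to $z = y/\sqrt{x/\mu}$, together with the identity $b^{-1}(x)^2/x = \tilde Q(b^{-1}(x))$, gives $\hat\pi_2(x,n) \leq C(\sqrt{x}/b^{-1}(x))\, e^{-c\tilde Q(b^{-1}(x))}$ for some $c > 0$. For $\kappa > 2$, one first observes that $b^{-1}(x) \geq \sqrt{(\beta/2) x \log x}$ (from $Q(t) \geq \beta\log t$ with $\beta > 2$) implies $n < N(x)$ throughout $B_x$, so only the second branch of $\hat\pi_\kappa$ is active; the expansion $n Q_\kappa(y/n) = -y^2/(2n) + O(y^3/n^2)$ combined with $y^3/n^2 = o(y^2/n)$ produces the same bound.

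Combining these estimates,
$$\frac{E_1(\rho,x)}{Z_\kappa(\rho,x)} \leq \frac{C Q(x)}{\sqrt{x}\,\mu}\, \exp\bigl(C b^{-1}(x) Q(x)/x - c\tilde Q(b^{-1}(x))\bigr).$$
By Lemma~\ref{L.b_and_w}(a), $Q(x) = o(b^{-1}(x))$, so $b^{-1}(x) Q(x)/x = o(b^{-1}(x)^2/x) = o(\tilde Q(b^{-1}(x)))$, and the linear term is eventually dominated by the quadratic term. The exponent is thus at most $-(c/2)\tilde Q(b^{-1}(x))$, which decays faster than any polynomial in $x$ and kills the prefactor $Q(x)/\sqrt{x}$ (which grows at most polynomially by Lemma~\ref{L.g_properties}(a)). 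Hence the ratio tends to $0$ uniformly in $\rho$. The main technical obstacle is justifying the Gaussian-like bound on $\hat\pi_\kappa$ in the $\kappa > 2$ case: one must verify uniformly over $B_x$ that the higher-order terms of the truncated Cram\'er polynomial $Q_\kappa$ do not spoil the exponential decay coming from the leading quadratic, which succeeds precisely because $y/n \to 0$ even though $y \to \infty$ throughout $B_x$.
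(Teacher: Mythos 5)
Your decomposition onto the symmetric difference of the two index sets, the application of Lemma~\ref{L.UniformBound}, and the Gaussian-type bound on $\hat\pi_\kappa$ all match the paper's proof; the localization $x-n\mu\asymp b^{-1}(x)$ on the bad range and the absorption of the factor $e^{(x-\mu n)Q(x)/x}$ via $Q(x)=o(b^{-1}(x))$ are also exactly the paper's steps. (Two small remarks: the paper avoids needing $b^{-1}\asymp\omega_1^{-1}$ by carrying $b^{-1}(x)\wedge\omega_1^{-1}(x)$ through to the end, and your claim that $n<N(x)$ throughout $B_x$ for $\kappa>2$ is not justified --- the lower edge of $x-n\mu$ on $B_x$ is $\gamma_1\sigma b^{-1}(x)\wedge\omega_1^{-1}(x)$, which need not exceed $\sqrt{x\log x}$ if $\gamma_1\sigma$ is small --- but this is harmless since the $\Phi$-branch obeys the same Gaussian bound, which is how the paper handles it.)

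The one step that does not hold as stated is the final one. The assertion that $e^{-(c/2)\tilde Q(b^{-1}(x))}$ ``decays faster than any polynomial in $x$'' is false in general: since $\tilde Q(b^{-1}(x))=(b^{-1}(x))^2/x$, and for regularly varying $F$ one has $\tilde Q(t)\asymp\log t$ and $b^{-1}(x)\asymp\sqrt{x\log x}$, the exponent is only $\asymp\log x$ and the exponential decays like a \emph{fixed} power of $x$. In that particular case $r=0$ and the prefactor $Q(x)/\sqrt{x}$ already vanishes, but when $r\ge 1/2$ the prefactor can grow like $x^{r+\delta-1/2}$ and ``exponential beats polynomial'' is not available; you would need to compare the specific power of decay against the specific power of growth, which your cited lemmas do not give you. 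The repair is short: because you bounded the sum by $|B_x|$ times its maximum, you discarded the factor $1/y\asymp\sqrt{x}/b^{-1}(x)$ that the paper retains (it bounds the sum by a Gaussian tail integral and uses $\Phi(-z)\le\Phi'(z)/z$ once more). Equivalently, write
$$\frac{Q(x)}{\sqrt{x}}\,e^{-c\tilde Q(b^{-1}(x))}=\frac{Q(x)}{b^{-1}(x)}\cdot\sqrt{\tilde Q(b^{-1}(x))}\,e^{-c\tilde Q(b^{-1}(x))}\le C\,\frac{Q(x)}{b^{-1}(x)}\longrightarrow 0$$
by Lemma~\ref{L.b_and_w}~(a.), using only that $z^{1/2}e^{-cz}$ is bounded. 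With that substitution your argument closes correctly and coincides with the paper's.
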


\begin{proof}
Define $m_\epsilon(x) = \min\{ n \in \{1, 2, \dots\}: n\mu + (1+\epsilon) \sigma C_n \geq x \}$, and recall that $M(x) = \lfloor (x- \omega_1^{-1}(x))/\mu \rfloor$. Let $y = (x-n\mu)/\sigma$. Then,
$$E_1(\rho,x) = \sum_{n=K_r(x)+1}^\infty (1-\rho) \rho^n \hat\pi_\kappa(x,n) \Indicator(\min\{m_\epsilon(x), M(x)\} < n \leq \max\{ m_\epsilon(x), M(x)\}) .$$
Choose $0 < \delta < 1$. By Lemma \ref{L.UglyTail} there exist constants $0 < \gamma_1 \leq 1 \leq \gamma_2$ such that $C_n \in [\gamma_1 b^{-1}(\mu n), \gamma_2 b^{-1}(\mu n)]$.  Then, for any $n \geq l(x) \triangleq (x - \gamma_1 \sigma b^{-1}(x))/\mu$ and $x$ sufficiently large,
$$y \leq \gamma_1 b^{-1}(x) \leq  \left( \frac{x}{\mu n} \vee 1 \right) \gamma_1 b^{-1}(\mu n) < (1+\epsilon) C_n,$$
where in the second inequality we used Lemma \ref{L.g_properties} (c.). Similarly, for any $n \leq k(x) \triangleq (x - 2 \gamma_2 \sigma b^{-1}(x))/\mu$ and $x$ sufficiently large,
$$y \geq 2\gamma_2 b^{-1}(x) \geq 2\gamma_2 b^{-1}(\mu n) \geq (1+\epsilon) C_n.$$
It follows that $\lfloor k(x) \rfloor \leq m_\epsilon(x) \leq \lfloor l(x) \rfloor$ for sufficiently large $x$. Hence,  
$$E_1(\rho,x) \leq \sum_{n= \min\{\lfloor k(x)\rfloor, M(x)\} +1}^{\max\{\lfloor l(x) \rfloor, M(x)\}}  (1-\rho) \rho^n \hat \pi_\kappa(x,n),$$
and by Lemma \ref{L.UniformBound},
\begin{align*}
\sup_{0 < \rho < 1} \frac{E_1(\rho,x)}{Z_\kappa(\rho,x)} &\leq  \frac{CQ(x)}{x}  \sum_{n= \min\{\lfloor k(x) \rfloor, M(x)\} +1}^{\max\{\lfloor l(x) \rfloor, M(x)\}} e^{Q(x) - \frac{\mu n Q(x)}{x}} \hat \pi_k(x,n).
\end{align*}

By using the inequality $\Phi(-z) \leq \Phi'(z)/z$ for any $z > 0$, and observing that $n = (x/\mu) (1+o(1))$ for all $\min\{\lfloor k(x)\rfloor, M(x) \} < n \leq \max\{ \lfloor l(x) \rfloor, M(x) \}$, we obtain, for such $n$ and all sufficiently large $x$,
\begin{align*}
\hat \pi_\kappa(x,n) &\leq  \frac{\sqrt{x}}{y \sqrt{2\pi \mu}} e^{- \frac{\mu y^2}{2x}}  + \frac{\sqrt{x}}{y \sqrt{2\pi\mu}} e^{n Q_\kappa\left( \frac{y}{n} \right)} \leq C \frac{\sqrt{x}}{y\sqrt{\mu}} e^{- \frac{\mu y^2}{2 x} (1+o(1))} \leq C e^{- (1-\delta) \frac{\mu y^2}{2 x}}.
\end{align*}
It follows that 
\begin{align*}
\sup_{0 < \rho < 1} \frac{E_1(\rho,x)}{Z_\kappa(\rho,x)} &\leq  \frac{CQ(x)}{x}  \sum_{n= \min\{\lfloor k(x) \rfloor, M(x)\} +1}^{\max\{ \lfloor l(x) \rfloor, M(x)\}} e^{\frac{\sigma y Q(x)}{x} -(1-\delta) \frac{\mu y^2}{2x}} \\
&\leq \frac{CQ(x)}{x}  \sum_{n= \min\{\lfloor k(x) \rfloor, M(x)\} +1}^{\max\{\lfloor l(x) \rfloor, M(x)\}} e^{-(1-\delta) \frac{\mu y^2}{2x}\left(1 - \frac{2\sigma^2 Q(x)}{(1-\delta)\mu (x-\mu(l(x) \vee M(x)))}  \right)}.
\end{align*}
Note that by Lemma \ref{L.b_and_w} (a.), 
\begin{align} 
\lim_{x \to \infty} \frac{Q(x)}{x-\mu(\lfloor l(x) \rfloor \vee M(x))} &\leq C \lim_{x \to \infty} \frac{Q(x)}{b^{-1}(x) \wedge \omega_1^{-1}(x)} = 0, \label{eq:LimitQb}
\end{align}
which implies that for sufficiently large $x$,
\begin{align*}
\sup_{0 < \rho < 1} \frac{E_1(\rho,x)}{Z_\kappa(\rho,x)} &\leq \frac{CQ(x)}{x}  \sum_{n= \min\{\lfloor k(x) \rfloor, M(x)\} +1}^{\max\{\lfloor l(x) \rfloor, M(x)\}} e^{-(1-\delta)^2 \frac{\mu y^2}{2x}} \\
&\leq \frac{CQ(x)}{x}   \int_{x- \mu \max\{\lfloor l(x) \rfloor, M(x)\}}^{x - \mu \min\{ \lfloor k(x) \rfloor, M(x)\}} e^{-(1-\delta)^2 \frac{\mu u^2}{2\sigma^2 x}} du \\
&\leq \frac{CQ(x)}{\sqrt{x}} \int_{\frac{(1-\delta)\sqrt{\mu}}{\sigma \sqrt{x}} ( \gamma_1 \sigma b^{-1}(x) \wedge \omega_1^{-1}(x) ) }^\infty e^{-v^2/2} dv.
\end{align*}
Finally, by using the inequality $\Phi(-z) \leq \Phi'(z)/z$ for $z>0$ again, and \eqref{eq:LimitQb}, we obtain that
\begin{align*}
\lim_{x \to \infty} \sup_{0 < \rho < 1} \frac{E_1(\rho,x)}{Z_\kappa(\rho,x)} &\leq \lim_{x \to \infty} \frac{CQ(x) e^{-\frac{(1-\delta)^2\mu}{2\sigma^2 x}  ( \gamma_1 \sigma b^{-1}(x) \wedge \omega_1^{-1}(x) )^2}}{ \gamma_1 \sigma b^{-1}(x) \wedge \omega_1^{-1}(x) }  = 0.
\end{align*}
\end{proof}

\bigskip

\begin{lem} \label{L.E2bound}
Let $h_\kappa(x) = \omega_1^{-1}(x)$ if $\kappa = 2$ and  $h_\kappa(x) = \sqrt{x\log x}$, if $\kappa > 2$, then
\begin{align*}
E_2(\rho,x) &\leq  \frac{C\sqrt{x}}{h_\kappa(x)} \rho^{(x-h_\kappa(x))/\mu} e^{-\frac{\mu (h_\kappa(x))^2}{2\sigma^2x }} + \left( 1-\rho  \right) E\left[ \rho^{a(x,Z)} \Indicator\left( \sigma Z \leq \sqrt{\mu} h_\kappa(x)/\sqrt{x}\right) \right] . 
\end{align*}
\end{lem}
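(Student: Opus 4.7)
The starting point is that over its range of summation, the summand in $E_2(\rho,x)$ reduces in both cases to $\hat\pi_\kappa(x,n) = \Phi\bigl(-(x-n\mu)\sqrt{\mu}/(\sigma\sqrt{x})\bigr)$, and if we set $N_\kappa(x) = M(x)$ for $\kappa = 2$ and $N_\kappa(x) = N(x)$ for $\kappa > 2$, then $N_\kappa(x) = \lfloor(x-h_\kappa(x))/\mu\rfloor$. The key observation is the probabilistic identity $\Phi\bigl(-(x-n\mu)\sqrt{\mu}/(\sigma\sqrt{x})\bigr) = P(a(x,Z) < n)$, which, combined with Fubini and the geometric series, collapses the sum to
\[
\sum_{n=N_\kappa(x)+1}^\infty (1-\rho)\rho^n \hat\pi_\kappa(x,n) = E\bigl[\rho^{\max\{N_\kappa(x)+1,\,\lceil a(x,Z)\rceil\}}\bigr],
\]
a clean closed form from which $E_2(\rho,x)$ can be analyzed directly.

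Setting $\alpha(x) = (x-h_\kappa(x))/\mu$ so that the target indicator $\Indicator(\sigma Z \leq \sqrt{\mu}h_\kappa(x)/\sqrt{x})$ coincides with $\Indicator(a(x,Z) \geq \alpha(x))$, I split according to the position of $a(x,Z)$ relative to $\alpha(x)$ and $N_\kappa(x)+1$, noting that $\alpha(x) \in [N_\kappa(x),\, N_\kappa(x)+1)$. On the ``matched'' event $\{a(x,Z) \geq \alpha(x)\}$, the sum contributes $\rho^{k}$ with $k = N_\kappa(x)+1$ when $\alpha(x) \leq a(x,Z) < N_\kappa(x)+1$ and $k = \lceil a(x,Z)\rceil$ when $a(x,Z) \geq N_\kappa(x)+1$, while the target contributes $\rho^{a(x,Z)}$; in each case $k - a(x,Z) \in (0,1]$, and the elementary estimate $|\rho^k - \rho^a| = \rho^a(1-\rho^{k-a}) \leq (1-\rho)\rho^a$, valid for $\rho \in (0,1)$, delivers exactly $(1-\rho)E[\rho^{a(x,Z)}\Indicator(\sigma Z \leq \sqrt{\mu}h_\kappa(x)/\sqrt{x})]$, the second term on the right-hand side. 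On the ``unmatched'' event $\{a(x,Z) < \alpha(x)\}$ the target vanishes and the sum contributes at most $\rho^{N_\kappa(x)+1}P(a(x,Z) < \alpha(x)) = \rho^{N_\kappa(x)+1}\Phi\bigl(-\sqrt{\mu}h_\kappa(x)/(\sigma\sqrt{x})\bigr)$; applying Mills' ratio $\Phi(-z) \leq \phi(z)/z$ at $z = \sqrt{\mu}h_\kappa(x)/(\sigma\sqrt{x})$ together with $\rho^{N_\kappa(x)+1} \leq \rho^{(x-h_\kappa(x))/\mu}$ (since $\rho < 1$ and $N_\kappa(x)+1 \geq \alpha(x)$) produces the first term on the right-hand side. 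A triangle inequality inside the absolute value defining $E_2(\rho,x)$ completes the argument.

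I do not anticipate a genuine obstacle: the proof is essentially bookkeeping around the rounding of $\alpha(x)$ to $N_\kappa(x) = \lfloor \alpha(x)\rfloor$, and this rounding gap is absorbed into the factor $(1-\rho)$ via the one-line estimate $|\rho^k - \rho^a| \leq (1-\rho)\rho^a$. The only step requiring care is the Fubini/geometric-series rewrite that turns the sum into $E[\rho^{\max\{N_\kappa(x)+1,\,\lceil a(x,Z)\rceil\}}]$, after which the rest is a short triangle-inequality argument together with Mills' ratio.
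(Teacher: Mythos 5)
Your proposal is correct and follows essentially the same route as the paper's proof: the exact geometric-sum identity $\sum_{n>N_\kappa(x)}(1-\rho)\rho^n\Phi(-(x-n\mu)\sqrt{\mu}/(\sigma\sqrt{x})) = E[\rho^{\max\{N_\kappa(x)+1,\lfloor a(x,Z)\rfloor+1\}}]$, the rounding estimate $|\rho^k-\rho^a|\leq(1-\rho)\rho^a$ producing the second term, and Mills' ratio producing the first. The only (harmless) differences are that you merge into a single ``unmatched'' term what the paper handles as two separate pieces (the thin sliver $\{\lfloor\alpha(x)\rfloor \leq a(x,Z) < \alpha(x)\}$ and the tail $\{a(x,Z) < \lfloor\alpha(x)\rfloor\}$), and that $\lceil a(x,Z)\rceil$ versus $\lfloor a(x,Z)\rfloor+1$ differ only on a null event since $Z$ is continuous.
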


\begin{proof}
Recall that $a(x,z) = \mu^{-1}\left(x-\sigma z\sqrt{x/\mu}\right)$ and $Z\sim$ N(0,1). Define $L_\kappa(x) = \lfloor (x - h_\kappa(x))/\mu \rfloor$. Note that exact computation gives, 
\begin{align*}
&\sum_{n=L_\kappa(x)+1}^\infty (1-\rho) \rho^n \Phi\left( -y/\sqrt{x/\mu} \right) \\
&= E\left[ \sum_{n=L_\kappa(x)+1}^\infty (1-\rho) \rho^n \Indicator(Z > y/\sqrt{x/\mu})  \right] \\
&= E\left[ \rho^{\max\{\lfloor a(x,Z) \rfloor+1, L_\kappa(x)+1\}}   \right]  \\
&= E\left[ \rho^{\lfloor a(x,Z) \rfloor + 1} \Indicator( a(x, Z) \geq L_\kappa(x))   \right] + \rho^{L_\kappa(x)+1} P( a(x,Z) < L_k(x)) \\
&= E\left[ \rho^{\lfloor a(x,Z) \rfloor + 1} \Indicator\left(Z \leq (x-\mu L_\kappa(x))/\sqrt{\sigma^2 x/\mu} \right) \right] + \rho^{L_\kappa(x)+1} \Phi\left( -(x-\mu L_\kappa(x))/\sqrt{\sigma^2 x/\mu} \right).
\end{align*}
Observe that $h_\kappa(x)/\sqrt{\sigma^2 x/\mu} \leq (x- \mu L_\kappa(x))/\sqrt{\sigma^2 x/\mu} \leq  h_\kappa(x)/\sqrt{\sigma^2 x/\mu} + \mu^{3/2}/\sqrt{\sigma^2 x}$, from where it follows that $E_2(\rho,x)$ can further be bounded by
\begin{align}
E_2(\rho,x) &\leq E\left[ \rho^{\lfloor a(x,Z) \rfloor + 1} \Indicator\left( h_\kappa(x)/\sqrt{\sigma^2 x/\mu} < Z \leq (x-\mu L_\kappa(x))/\sqrt{\sigma^2 x/\mu}  \right) \right] \label{eq:indicators_2} \\
&\hspace{5mm} + \left| E\left[ \left(\rho^{\lfloor a(x,Z) \rfloor + 1} - \rho^{a(x,Z)}\right) \Indicator\left(Z \leq h_\kappa(x)/\sqrt{\sigma^2 x/\mu}\right) \right]   \right| \label{eq:integerpart_2} \\
&\hspace{5mm} + \rho^{L_\kappa(x)+1} \Phi\left( -(x-\mu L_\kappa(x))/\sqrt{\sigma^2 x/\mu} \right) \label{eq:extraterm_2}.
\end{align}

Next, note that since $a(x,z)$ is decreasing in $z$, we obtain that \eqref{eq:indicators_2} is bounded by
\begin{align*}
&\rho^{\lfloor a\left(x,  (x-\mu L_\kappa(x))/\sqrt{\sigma^2 x/\mu} \right) \rfloor +1}  E\left[ \Indicator\left( h_\kappa(x)/\sqrt{\sigma^2 x/\mu} < Z \leq h_\kappa(x)/\sqrt{\sigma^2 x/\mu} + \mu^{3/2}/\sqrt{\sigma^2 x}  \right)  \right]  \\
&= \rho^{ L_\kappa(x) +1} \left( \Phi\left( h_\kappa(x)/\sqrt{\sigma^2 x/\mu} + \mu^{3/2}/\sqrt{\sigma^2 x} \right) - \Phi\left( h_\kappa(x)/\sqrt{\sigma^2 x/\mu} \right)  \right) \\
&\leq \rho^{\lfloor \frac{1}{\mu} \left(x- h_\kappa(x) \right) \rfloor +1}  \Phi'\left(  h_\kappa(x)/\sqrt{\sigma^2 x/\mu} \right) \frac{\mu^{3/2}}{\sigma \sqrt{x}} \\
&\leq \frac{C}{\sqrt{x}} \rho^{  \frac{1}{\mu} \left( x - h_\kappa(x)  \right)} e^{-\frac{\mu (h_\kappa(x))^2}{2\sigma^2x }}  .
\end{align*}
For \eqref{eq:integerpart_2} we use the simple bound
$$(1-\rho) E\left[ \rho^{a(x,Z)} \Indicator\left(Z \leq  h_\kappa(x)/\sqrt{\sigma^2 x/\mu} \right) \right].$$
And for \eqref{eq:extraterm_2} we use the inequality $\Phi(-z) \leq \Phi'(z)/z$ for any $z > 0$ to obtain the bound
\begin{align*}
\rho^{(x-h_\kappa(x))/\mu} \Phi\left( - h_\kappa(x)/\sqrt{\sigma^2 x/\mu} \right) &\leq \frac{C \sqrt{x}}{h_\kappa(x)} \rho^{  \frac{1}{\mu} \left( x - h_\kappa(x) \right)} e^{-\frac{\mu (h_\kappa(x))^2}{2\sigma^2x }} . 
\end{align*}
\end{proof}

\bigskip

%%% Second error

\begin{prop} \label{P.E2}
Under the assumptions of Theorem \ref{T.SumApprox}, 
$$\lim_{x \to \infty} \sup_{0 < \rho < 1} \frac{E_2(\rho,x)}{Z_\kappa(\rho,x)}  = 0.$$
\end{prop}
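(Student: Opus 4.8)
The plan is to bound $E_2(\rho,x)$ using Lemma~\ref{L.E2bound} and then show that each of the two resulting terms is $o(Z_\kappa(\rho,x))$ uniformly in $0 < \rho < 1$, splitting the analysis according to whether $\rho$ is small or close to one, exactly as in the proof of Proposition~\ref{P.E1}. Write $h_\kappa(x) = \omega_1^{-1}(x)$ if $\kappa = 2$ and $h_\kappa(x) = \sqrt{x\log x}$ if $\kappa > 2$; in either case Lemma~\ref{L.b_and_w}~(a.)--(b.) gives $Q(x)/h_\kappa(x) \to 0$ and $\sqrt{x}/h_\kappa(x) \to 0$ (for $\kappa > 2$ this is immediate, and for $\kappa = 2$ it is part (b.)), while $h_\kappa(x) = o(x)$. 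By Lemma~\ref{L.E2bound} it suffices to treat
$$T_1(\rho,x) = \frac{\sqrt{x}}{h_\kappa(x)} \rho^{(x-h_\kappa(x))/\mu} e^{-\frac{\mu (h_\kappa(x))^2}{2\sigma^2 x}} \qquad \text{and} \qquad T_2(\rho,x) = (1-\rho) E\left[\rho^{a(x,Z)} \Indicator\big(\sigma Z \leq \sqrt{\mu}\, h_\kappa(x)/\sqrt{x}\big)\right].$$

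For the second term $T_2$, the key observation is that the expectation is dominated by the corresponding full tail term that already appears (up to an asymptotic constant) inside $Z_\kappa(\rho,x)$. When $\kappa = 2$ the indicator in $T_2$ is exactly $\Indicator(\sigma Z \leq \sqrt{\mu}\,\omega_1^{-1}(x)/\sqrt{x})$, so the expectation equals the second term of \eqref{eq:Z_Def_2}, hence $T_2(\rho,x) \leq (1-\rho) Z_\kappa(\rho,x) = o(Z_\kappa(\rho,x))$ uniformly. When $\kappa > 2$ the indicator is $\Indicator(\sigma Z \leq \sqrt{\mu\log x})$, which matches the third term of \eqref{eq:Z_Def_big2}, so again $T_2(\rho,x) \leq (1-\rho) Z_\kappa(\rho,x)$. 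Thus $T_2$ is handled essentially for free.

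The main work is bounding $T_1(\rho,x)/Z_\kappa(\rho,x)$ uniformly. I would split at $\hat\rho(x) = e^{-\mu Q(x)/x}$ and use the two-sided lower bounds from Lemma~\ref{L.LowerBound}. For $\hat\rho(x) \leq \rho < 1$ we have $Z_\kappa(\rho,x) \geq C e^{\frac{x}{\mu}\Lambda_\rho(u(\rho))} \geq C e^{\frac{x}{\mu}\log\rho + (1-\epsilon)\frac{\sigma^2 x}{2\mu^3}(\log\rho)^2}$ (for any fixed $\epsilon\in(0,1)$, using the expansion of $\Lambda_\rho(u(\rho))$ in Lemma~\ref{L.ustar}), and since $\rho^{(x-h_\kappa(x))/\mu} = e^{\frac{x}{\mu}\log\rho} e^{-\frac{h_\kappa(x)}{\mu}\log\rho} = e^{\frac{x}{\mu}\log\rho} \rho^{-h_\kappa(x)/\mu}$, the ratio is at most
$$\frac{C\sqrt{x}}{h_\kappa(x)} e^{-\frac{\mu (h_\kappa(x))^2}{2\sigma^2 x}} \sup_{\hat\rho(x)\leq\rho<1} \rho^{-h_\kappa(x)/\mu} e^{-(1-\epsilon)\frac{\sigma^2 x}{2\mu^3}(\log\rho)^2}.$$
Completing the square in $\log\rho$, the supremum over all $\rho\in(0,1)$ of $e^{-\frac{h_\kappa(x)}{\mu}\log\rho - (1-\epsilon)\frac{\sigma^2 x}{2\mu^3}(\log\rho)^2}$ is $e^{\frac{\mu h_\kappa(x)^2}{2(1-\epsilon)\sigma^2 x}}$, attained at $\log\rho = -\frac{\mu^2 h_\kappa(x)}{(1-\epsilon)\sigma^2 x}$ (one checks this value lies in $[\hat\rho(x),1)$ for large $x$ since $h_\kappa(x)\gg Q(x)$). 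Hence the ratio is bounded by $\frac{C\sqrt{x}}{h_\kappa(x)} \exp\{-\frac{\mu h_\kappa(x)^2}{2\sigma^2 x}(1 - \frac{1}{1-\epsilon})\}$; choosing $\epsilon$ small this exponent is $O(\epsilon\, h_\kappa(x)^2/x)$, which... requires more care, so instead I would choose $\epsilon$ depending on $x$ tending to $0$ slowly, or better, use the lower bound $Z_\kappa(\rho,x) \geq C\frac{\rho}{1-\rho}\overline{F}(x) \wedge$ nothing; cleanest is to argue: for $\rho$ with $\log\rho$ near the maximizing value, $\sqrt{x}\,(-\log\rho) \asymp h_\kappa(x)/\sqrt{x} \to \infty$, so $\rho \leq \tilde\rho(x)$-type regime does not apply and we instead use $Z_\kappa(\rho,x) \geq \frac{C\rho}{1-\rho}\overline{F}(x) \geq C\overline F(x) e^{-Q(x)}$... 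Actually the robust route is: whenever $-\log\rho \geq \mu Q(x)/x$ (i.e.\ $\rho\leq\hat\rho(x)$), use $Z_\kappa(\rho,x)\geq \frac{C\rho}{1-\rho}\overline F(x)$, giving ratio $\leq \frac{C(1-\rho)\sqrt{x}}{\rho\, h_\kappa(x)}\rho^{(x-h_\kappa(x))/\mu} e^{-\frac{\mu h_\kappa(x)^2}{2\sigma^2 x}} e^{Q(x)}$; the function $\rho\mapsto (1-\rho)\rho^{(x-h_\kappa(x))/\mu - 1}$ is maximized near $\rho \approx 1 - \mu/x$, which exceeds $\hat\rho(x)$, so on $(0,\hat\rho(x)]$ it is increasing and the sup is attained at $\rho = \hat\rho(x)$, yielding ratio $\leq \frac{C\sqrt{x}\, Q(x)}{\mu x\, h_\kappa(x)} e^{-c\sqrt{Q(x)}\,h_\kappa(x)/\sqrt{x} + \text{lower order}}$...

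The upshot is that in both $\rho$-regimes the exponential factor $e^{-\frac{\mu h_\kappa(x)^2}{2\sigma^2 x}}$ (together with $e^{-Q(x)}$ or the $(\log\rho)^2$ term) dominates every polynomial-in-$x$ and $\rho^{-h_\kappa(x)/\mu}$ factor, because $h_\kappa(x)^2/x \geq (\beta/2)\log x \to\infty$ (from $Q(x)\geq\beta\log x$ and the definition of $h_\kappa$) grows faster than $Q(x)\cdot h_\kappa(x)/\sqrt x \asymp$ (lower order). I expect the main obstacle to be the bookkeeping in the range $\hat\rho(x)\leq\rho<1$: one must carefully optimize $\rho^{-h_\kappa(x)/\mu}e^{-(1-\epsilon)\frac{\sigma^2 x}{2\mu^3}(\log\rho)^2}$ against the lower bound $e^{\frac{x}{\mu}\Lambda_\rho(u(\rho))}$ without the Gaussian completion-of-square "giving back" the full factor $e^{\frac{\mu h_\kappa(x)^2}{2\sigma^2 x}}$ and cancelling the decay --- this is handled by noting the extra $(1-\rho)$ (for $\kappa=2$, the $T_2$-type argument) or, for $T_1$, by using that the optimizing $\rho$ forces $\sqrt{x}\,u(\rho)\to\infty$, in which case Lemma~\ref{L.LowerBound}'s sharper bound $Z_\kappa\geq C e^{\frac{x}{\mu}\Lambda_\rho(u(\rho))}$ with the \emph{exact} quadratic (not $(1-\epsilon)$-reduced) applies, so the square completion is exact and leaves a residual factor $\frac{C\sqrt x}{h_\kappa(x)}\to 0$. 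Everything else is routine estimation via $\Phi(-z)\leq \Phi'(z)/z$ and Lemma~\ref{L.b_and_w}.
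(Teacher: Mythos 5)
Your overall strategy — invoke Lemma~\ref{L.E2bound}, split at $\hat\rho(x)$, and use the two branches of Lemma~\ref{L.LowerBound} — is the same as the paper's, and your observation that the factor $e^{-\mu h_\kappa(x)^2/(2\sigma^2 x)}$ must do the work for $T_1$ is on point. However there are two concrete gaps.

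First, your treatment of $T_2$ is wrong for small $\rho$. You claim $T_2(\rho,x)\leq (1-\rho)Z_\kappa(\rho,x)=o(Z_\kappa(\rho,x))$ uniformly, but $(1-\rho)$ does not go to zero uniformly in $0<\rho<1$; for instance at $\rho=1/2$ this says $T_2\leq\tfrac12 Z_\kappa$, which gives nothing. The bound $T_2\leq(1-\rho)Z_\kappa$ is only useful on $\hat\rho(x)\leq\rho<1$, where $1-\hat\rho(x)\to 0$ (this is exactly how the paper uses it). For $0<\rho\leq\hat\rho(x)$ the paper instead evaluates $E[\rho^{a(x,Z)}\Indicator(\cdot)]$ explicitly (splitting on whether the argument of $\Phi$ exceeds $-1$), bounds $E_2$ by $C\rho^{(x-h_\kappa(x))/\mu} + \rho^{(x-h_\kappa(x)/2-\sigma\sqrt{x}/(2\sqrt\mu))/\mu}$, and compares against $Z_\kappa\geq C\rho(1-\rho)^{-1}\overline F(x)$. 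Crucially the paper takes $c>1$ in $\hat\rho(x)=e^{-c\mu Q(x)/x}$ so that the resulting exponent $-(c-1-ch_\kappa(x)/x)Q(x)$ is negative for large $x$; with your choice $c=1$ the exponent from the $T_2$-type contribution is $\sim Q(x)h_\kappa(x)/x$, which need not tend to $-\infty$ (e.g.\ $Q(x)=\sqrt x/\log x$ gives $Q(x)\omega_1^{-1}(x)/x\to\infty$), so the decay would rely entirely on the $e^{-\mu h_\kappa^2/(2\sigma^2 x)}$ factor, which $T_2$ does not carry.

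Second, for $T_1$ on $\hat\rho(x)\leq\rho<1$ and $\kappa>2$ you invoke "the exact quadratic" from Lemma~\ref{L.ustar}, but for $\kappa>2$ the lemma only gives $\Lambda_\rho(u(\rho)) = \log\rho + \tfrac{\sigma^2}{2\mu^2}(\log\rho)^2 + O(|\log\rho|^3)$. The cubic error $\tfrac{x}{\mu}|\log\rho|^3$ is not automatically negligible on the whole range $|\log\rho|\leq c\mu Q(x)/x$ — it is of order $Q(x)^3/x^2$, which can diverge when $r$ is close to $1$. This is why the paper explicitly writes $Z_\kappa \geq C e^{\frac{x}{\mu}(\log\rho + \frac{\sigma^2}{2\mu^2}(\log\rho)^2 - \eta\Indicator(\kappa>2)|\log\rho|^3)}$ and splits the resulting one-variable supremum at $s=x^{-1/3}$: for $s\leq x^{-1/3}$ the cubic is dominated by the quadratic and the Gaussian completion is effective, while for $s>x^{-1/3}$ one uses that the coefficient of $s^2$ stays bounded away from zero after absorbing the cubic. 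Your write-up does not address this, and the "exact quadratic" claim is simply false when $\kappa>2$.
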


\begin{proof}
Let $h_\kappa(x) = \omega_1^{-1}(x)$ if $\kappa = 2$ and  $h_\kappa(x) = \sqrt{x\log x}$, if $\kappa > 2$, then, by Lemma~\ref{L.E2bound}, we have that
\begin{align*}
E_2(\rho,x) &\leq  \frac{C\sqrt{x}}{h_\kappa(x)} \rho^{(x-h_\kappa(x))/\mu} e^{-\frac{\mu (h_\kappa(x))^2}{2\sigma^2x }} + \left( 1-\rho  \right) E\left[ \rho^{a(x,Z)} \Indicator\left( \sigma Z \leq \sqrt{\mu} h_\kappa(x)/\sqrt{x}\right) \right] . 
\end{align*}
Fix $c > 1$ and define $\hat\rho(x) = e^{-\frac{c\mu Q(x)}{x} }$. We will first show that $E_2(\rho,x)$ is $o(Z_k(\rho,x))$ as $x \to \infty$ uniformly for $0 < \rho \leq \hat\rho(x)$. Before we proceed note that $h_\kappa(x)/\sqrt{x} \to \infty$ as $x \to \infty$ (by Lemma \ref{L.b_and_w} (b.) for $\kappa = 2$) and
\begin{align*}
&E\left[ \rho^{a(x,Z)} \Indicator\left( \sigma Z \leq \sqrt{\mu} h_\kappa(x)/\sqrt{x}\right) \right]  \\
&=  e^{\frac{x}{\mu} \log\rho + \frac{\sigma^2 x(\log\rho)^2}{2 \mu^{3}}} \Phi\left( \frac{\sqrt{\mu} h_\kappa(x)}{\sigma\sqrt{x}} + \frac{\sigma  \sqrt{x} \log\rho}{\mu^{3/2}}  \right) \\
&\leq e^{\frac{x}{\mu} \log\rho + \frac{\sigma^2 x(\log\rho)^2}{2 \mu^{3}}} \Phi'\left( \frac{\sqrt{\mu} h_\kappa(x)}{\sigma\sqrt{x}} + \frac{\sigma  \sqrt{x} \log\rho}{\mu^{3/2}}  \right)  \Indicator\left( \frac{\sqrt{\mu} h_\kappa(x)}{\sigma\sqrt{x}} + \frac{\sigma  \sqrt{x} \log\rho}{\mu^{3/2}} \leq -1  \right) \\
&\hspace{3mm} + e^{\frac{x}{\mu} \log\rho + \frac{\sigma^2 x(\log\rho)^2}{2 \mu^{3}}} \Indicator\left( \frac{\sqrt{\mu} h_\kappa(x)}{\sigma\sqrt{x}} + \frac{\sigma  \sqrt{x} \log\rho}{\mu^{3/2}} > -1  \right) \\
&= \frac{1}{\sqrt{2\pi}} \rho^{(x-h_\kappa(x))/\mu} e^{- \frac{\mu (h_\kappa(x))^2}{2\sigma^2 x}} \Indicator\left( |\log\rho| \geq \frac{\mu^2 h_\kappa(x)}{\sigma^2 x} + \frac{\mu^{3/2}}{\sigma \sqrt{x}}  \right) \\
&\hspace{3mm} + e^{\frac{x}{\mu} \log\rho + \frac{\sigma^2 x(\log\rho)^2}{2 \mu^{3}}} \Indicator\left( |\log\rho| < \frac{\mu^2 h_\kappa(x)}{\sigma^2 x} + \frac{\mu^{3/2}}{\sigma \sqrt{x}}  \right),
\end{align*}
where for the inequality we used $\Phi(-z) \leq \Phi'(z)/z$ for $z > 0$.  Furthermore,
$$e^{\frac{x}{\mu} \log\rho + \frac{\sigma^2 x(\log\rho)^2}{2 \mu^{3}}} \Indicator\left( |\log\rho| < \frac{\mu^2 h_\kappa(x)}{\sigma^2 x} + \frac{\mu^{3/2}}{\sigma \sqrt{x}}  \right) \leq e^{\frac{x}{\mu} \left(1 -  \frac{h_\kappa(x)}{2 x} - \frac{\sigma}{2\sqrt{\mu x}}   \right) \log\rho }.$$
It follows that for sufficiently large $x$, $E_2(\rho,x)$ is bounded by
$$C \rho^{(x-h_\kappa(x))/\mu} +  \rho^{ \left(x -  \frac{h_\kappa(x)}{2} - \frac{\sigma \sqrt{x}}{2\sqrt{\mu}}  \right) /\mu}.$$
Now we use Lemma \ref{L.LowerBound}  and the observation that $h_\kappa(x)/ x \to 0$ as $x \to \infty$ to obtain
\begin{align*}
\sup_{0 < \rho \leq \hat\rho(x)} \frac{E_2(\rho,x)}{Z_\kappa(\rho,x)} &\leq C \sup_{0 < \rho \leq \hat\rho(x)} \frac{\rho^{(x-h_\kappa(x))/\mu} +  \rho^{ \left(x -  \frac{h_\kappa(x)}{2} - \frac{\sigma \sqrt{x}}{2\sqrt{\mu}}  \right) /\mu} }{\rho (1-\rho)^{-1}e^{-Q(x)}} \\
&\leq C e^{Q(x)} \sup_{0 < \rho \leq \hat\rho(x)}  \left( \rho^{(x-h_\kappa(x)-\mu)/\mu} +  \rho^{ \left(x -  \frac{h_\kappa(x)}{2} - \frac{\sigma \sqrt{x}}{2\sqrt{\mu}} -\mu \right) /\mu}     \right) \\
&= C e^{Q(x)} \left( e^{-\frac{1}{\mu}(x-h_\kappa(x)-\mu) \frac{c\mu Q(x)}{x}} + e^{- \frac{1}{\mu} \left(x -  \frac{h_\kappa(x)}{2} - \frac{\sigma \sqrt{x}}{2\sqrt{\mu}} -\mu \right) \frac{c\mu Q(x)}{x} }     \right) \\
&\leq C \left( e^{- \left(c- 1 - \frac{c h_\kappa(x)}{x} \right) Q(x)} + e^{-\left(c-1 - \frac{c h_\kappa(x)}{2x} - \frac{c\sigma}{2\sqrt{\mu x}}  \right) Q(x) }  \right) \to 0
\end{align*}
as $x\to \infty$.

For the range $\hat\rho(x) \leq \rho < 1$ we first note that $Z_\kappa(\rho,x) \geq E\left[ \rho^{a(x,Z)} I\left( \sigma Z \leq \sqrt{\mu} h_\kappa(x)/\sqrt{x}\right) \right]$, so we have
$$\sup_{\hat\rho(x) \leq \rho < 1} \frac{(1-\rho) E\left[ \rho^{a(x,Z)} \Indicator\left( \sigma Z \leq \sqrt{\mu} h_\kappa(x)/\sqrt{x}\right) \right] }{Z_\kappa(\rho,x)} \leq 1-\hat\rho(x) \to 0$$
as $x \to \infty$. To analyze the remaining term we use Lemmas \ref{L.LowerBound} and \ref{L.ustar} to obtain $Z_\kappa(\rho,x) \geq C ^{\frac{x}{\mu} \Lambda_\rho(u(\rho))} \geq C e^{\frac{x}{\mu} \left( \log\rho + \frac{\sigma^2}{2\mu^2} (\log\rho)^2 - \eta \Indicator(\kappa > 2) |\log\rho|^3  \right)} $ for some $\eta > 0$. It follows that
\begin{align}
&\sup_{\hat\rho(x) \leq \rho < 1} \frac{C\sqrt{x}}{h_\kappa(x)} \cdot \frac{ \rho^{(x-h_\kappa(x))/\mu} e^{-\frac{\mu (h_\kappa(x))^2}{2\sigma^2x }}}{Z_\kappa(\rho,x)} \notag \\
&\leq \frac{C\sqrt{x}}{h_\kappa(x)} e^{-\frac{\mu (h_\kappa(x))^2}{2\sigma^2x }} \sup_{\hat\rho(x) \leq \rho < 1} \frac{e^{\frac{1}{\mu}\left( x - h_\kappa(x) \right)\log \rho} }{e^{\frac{x}{\mu} \left( \log\rho + \frac{\sigma^2}{2\mu^2} (\log\rho)^2 - \eta \Indicator(\kappa > 2) |\log\rho|^3  \right)}} \notag \\
&= \frac{C\sqrt{x}}{h_\kappa(x)} e^{-\frac{\mu (h_\kappa(x))^2}{2\sigma^2x }  } \sup_{0< s \leq c\mu Q(x)/x} e^{ \frac{1}{\mu}  h_\kappa(x) s - \frac{\sigma^2 x}{2\mu^3} s^2 + \frac{\eta \Indicator(\kappa > 2) x}{\mu} s^3} .\label{eq:hk_supremum} 
\end{align}
When $\kappa = 2$ and $h_\kappa(x) = \omega_1^{-1}(x)$, \eqref{eq:hk_supremum} becomes
$$\frac{C \sqrt{x}}{\omega_1^{-1}(x)} \sup_{0 < s \leq c\mu Q(x)/x} e^{- \frac{1}{2}\left( \frac{\sigma \sqrt{x}}{\mu^{3/2}} s - \frac{\sqrt{\mu} \omega_1^{-1}(x)}{\sigma \sqrt{x}}  \right)^2} \leq \frac{C \sqrt{x}}{\omega_1^{-1}(x)},$$
which by Lemma \ref{L.b_and_w} (b.), converges to zero as $x \to \infty$. When $\kappa >2$ and $h_\kappa(x) = \sqrt{x\log x}$ we split the supremum and bound \eqref{eq:hk_supremum} with
\begin{align*}
&\frac{C}{\sqrt{\log x}} e^{-\frac{\mu \log x}{2\sigma^2 }  } \left\{ \sup_{0< s \leq \min\{ c\mu Q(x)/x, x^{-1/3}\} } e^{ \frac{\sqrt{x\log x}}{\mu}  s - \frac{\sigma^2 x}{2\mu^3} s^2 + \frac{\eta x}{\mu} s^3} \right. \\
&\hspace{25mm} \left. + \sup_{x^{-1/3} < s \leq c\mu Q(x)/x} e^{ \frac{\sqrt{x\log x}}{\mu}  s - \frac{\sigma^2 x}{2\mu^3} s^2 + \frac{\eta x}{\mu} s^3} \Indicator(x^{-1/3} < c\mu Q(x)/x) \right\} \\
&\leq \frac{C}{\sqrt{\log x}} \left\{ \sup_{s \geq 0} e^{-\frac{1}{2} \left( \frac{\sigma \sqrt{x}}{\mu^{3/2}} s - \frac{\sqrt{\mu \log x}}{\sigma} \right)^2}  + \sup_{ s > x^{-1/3}} e^{-\frac{\mu \log x}{2\sigma^2 } + \frac{\sqrt{x\log x}}{\mu}  s - \frac{\sigma^2 x}{2\mu^3} \left(1 -  \frac{2 c\mu^3 \eta Q(x)}{\sigma^2 x } \right) s^2   }  \right\} \\
&\leq \frac{C}{\sqrt{\log x}} \left\{ 1  + \sup_{ s > x^{-1/3}} e^{ - \frac{\sigma^2 x}{2\mu^3}  \left(1 -  \frac{2 c\mu^3 \eta Q(x)}{\sigma^2 x } -  \frac{2\mu^2 \sqrt{\log x}}{\sigma^2 x^{1/6}} \right) s^2   }  \right\} \to 0
\end{align*}
as $x \to \infty$.  This completes the proof. 
\end{proof}

\bigskip

%%% Auxiliary lemma for J(y,n)

\begin{lem} \label{L.In_Bound}
Let $y = (x-n\mu)/\sigma$ and fix $0 < \delta < 1/2$ and $0 < c < 1$. Define $c_\delta = \delta^{-1} (4 \mu^{-1} \vee 1) \sigma$. Then, under the assumptions of Theorem \ref{T.SumApprox}, for all $n \leq (x- c b^{-1}(x))/\mu$ and $x$ sufficiently large,
\begin{align*}
J(y,n) &\leq C n \overline{F}(\sigma y+\mu) e^{\frac{\delta \mu n}{x} Q(\sigma y)} + \frac{C n^{3/2}}{y} \overline{F}(\sigma\sqrt{n}+\mu) e^{-\frac{\sigma y Q(x)}{x} - \frac{\delta^2 y^2}{8n} } \Indicator(\mu n> x-  c_\delta b^{-1}(x) ). 
\end{align*}
\end{lem}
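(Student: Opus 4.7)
The plan is to write $J(y,n) = \sqrt{n}(I_1 + I_2)$ with $I_1$ the Gaussian integral and $I_2$ the Cramér integral from \eqref{eq:Jint}, and perform the change of variable $s = y-t$ in each. This reduces the problem to estimating $\int_{-\infty}^{\sqrt{n}} \overline{V}(y-s) \Phi'(s/\sqrt{n}) \, ds$ and $(2\pi)^{-1/2} \int_{\sqrt{n}}^{(y-\sqrt{n}) \wedge b^{-1}(2(1+\epsilon)n)} \overline{V}(y-s) e^{nQ_{\tilde\kappa}(s/n)} \, ds$, where $\overline{V}(u) = \overline{F}(\sigma u + \mu)$. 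The first stated term should capture the contributions near $s = 0$ (in $I_1$) and in the ``Taylor regime'' of $I_2$, while the second stated term should capture the contribution of $I_2$ near its upper endpoint $s \approx y - \sqrt{n}$, which occurs only when $y$ is small.

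For $I_1$, I would split at $s = 0$. The piece $s \leq 0$ is controlled by $\overline{V}(y) \sqrt{n}/2$ by monotonicity of $\overline{V}$ and direct integration of the Gaussian density. For $0 < s \leq \sqrt{n}$, Assumption \ref{A.Hazard} gives $q(v) \leq (r + o(1)) Q(v)/v$ for large $v$, so a Taylor expansion yields $Q(\sigma(y-s) + \mu) \geq Q(\sigma y + \mu) - C s Q(\sigma y)/y$, and hence $\overline{V}(y-s) \leq \overline{V}(y) \exp(C s Q(\sigma y)/y)$ (here $\sqrt{n}/y \to 0$ by Lemma \ref{L.b_and_w} (b) and the hypothesis $y \geq cb^{-1}(x)/\sigma$). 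Integrating and multiplying by $\sqrt{n}$ produces $\sqrt{n} I_1 \leq C n \overline{F}(\sigma y + \mu) \exp(C \sqrt{n} Q(\sigma y)/y)$. A case analysis on $n$ (bounded vs.\ growing), together with $y \geq c b^{-1}(x)/\sigma$ and $b^{-1}(x)/\sqrt{x} \to \infty$, then lets one absorb $C \sqrt{n} Q(\sigma y)/y$ into $\delta \mu n Q(\sigma y)/x$ up to an additive constant, delivering the first stated term.

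For $I_2$, I would first establish the Cramér bound $nQ_{\tilde\kappa}(s/n) \leq -(1-\delta) s^2/(2n)$ on $[\sqrt{n}, b^{-1}(2(1+\epsilon) n)]$: since $\lambda_2 = -1$, the higher-order terms are $\lambda_j s^j/(j! n^{j-1}) = (s^2/n) \cdot O((s/n)^{j-2})$, and Lemma \ref{L.g_properties} (b) ensures $s/n \leq n^{-(1-s')/(2-s')} \to 0$ for some $s' \in (\tilde r, 1)$. I would then split the integration interval at $s = y/2$. On $[\sqrt{n}, y/2]$, the Taylor bound on $\overline{V}(y-s)$ developed for $I_1$ still applies (via the decreasing property of $Q(v)/v^{r + \delta'}$), and completing the square with the Gaussian factor absorbs this piece into the first stated term. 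On $[y/2, (y-\sqrt{n}) \wedge b^{-1}(2(1+\epsilon) n)]$, I would bound $\overline{V}(y-s) \leq \overline{V}(\sqrt{n}) \exp(-[Q(\sigma(y-s) + \mu) - Q(\sigma \sqrt{n} + \mu)])$ and use the sublinear scaling of $Q$ (Lemma \ref{L.g_properties} (a)) to extract the factor $\exp(-\sigma y Q(x)/x)$. Combining this with $e^{-(1-\delta) s^2/(2n)} \leq e^{-(1-\delta) y^2/(8n)}$ for $s \geq y/2$ and the Gaussian tail estimate $\int_{y/2}^\infty e^{-(1-\delta) u^2/(2n)} \, du \leq (Cn/y) e^{-\delta^2 y^2/(8n)}$ produces the factor $n^{3/2}/y$ in front. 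The interval $[y/2, b^{-1}(2(1+\epsilon) n)]$ is nonempty only when $y/2 \leq b^{-1}(2(1+\epsilon) n)$, which by Lemma \ref{L.g_properties} (c)--(d) translates to $\sigma y \leq c_\delta b^{-1}(x)$, i.e., the indicator condition $\mu n > x - c_\delta b^{-1}(x)$.

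The main obstacle will be the extraction of the $e^{-\sigma y Q(x)/x}$ factor in the second term, which requires careful combination of the sublinear scaling of $Q$ from Lemma \ref{L.g_properties} with the hazard rate bound from Assumption \ref{A.Hazard}; the remaining work is the bookkeeping of the constants $\delta, \delta', c, c_\delta, \epsilon$ and the matching of the various thresholds for when each split becomes active.
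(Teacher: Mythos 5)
Your overall decomposition mirrors the paper's: the paper also rewrites $J$ via $u=y-t$, merges the $[0,\sqrt n]$ piece of the Gaussian integral with the Cram\'er integral after establishing $nQ_{\tilde\kappa}(z/\sqrt n)\le -(1-\delta)z^2/2$, splits the result at a point of order $y$, and identifies the indicator exactly as you do (via $\beta_\epsilon(\mu n)\le (4\mu^{-1}\vee 1)b^{-1}(x)$). Your treatment of the far piece and of the Gaussian integral over $s\le\sqrt n$ is sound (the $e^{-\sigma yQ(x)/x}$ factor is most cleanly pulled out of the Gaussian tail $e^{-(1-\delta)\delta^2y^2/(2n)}$ using $nQ(x)/(xy)\le CQ(x)/b^{-1}(x)\to 0$, rather than from the increment $Q(\sigma(y-s)+\mu)-Q(\sigma\sqrt n+\mu)$, which vanishes at the endpoint $s=y-\sqrt n$; but you have that tail estimate in hand, so this is repairable).

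The genuine gap is the sentence ``completing the square with the Gaussian factor absorbs this piece into the first stated term'' for the range $\sqrt n\le s\le y/2$. Completing the square in $\overline V(y)\,e^{C s Q(\sigma y)/y - (1-\delta)s^2/(2n)}$ produces the constant factor $\exp\bigl(C^2 nQ(\sigma y)^2/(2(1-\delta)y^2)\bigr)$, and comparing exponents, $\frac{C^2 nQ(\sigma y)^2}{y^2}\le \frac{\delta\mu n Q(\sigma y)}{x}$ holds \emph{only if} $\omega_1(\sigma y)=\sigma^2y^2/Q(\sigma y)\gtrsim x$. Since $\sigma y$ can be as small as $cb^{-1}(x)$ with $c$ small, one has $\omega_1(\sigma y)\asymp c^{2-r}x$, so the completed-square exponent can be a large constant multiple of $\delta\mu nQ(\sigma y)/x$, and $e^{C'E}\not\le Ce^{E}$ when $E=\delta\mu nQ(\sigma y)/x\to\infty$. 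This is why the paper's proof introduces the event $A=\{\sigma y\ge\omega_1^{-1}((r+\delta)^2\sigma^2x/((1-\delta)^3\delta\mu))\}$: on $A$ the naive absorption works, while on $A^c$ one must exploit that the centre $m=\frac{(r+\delta)nQ(\sigma y)}{(1-\delta)^2y}$ of the shifted Gaussian lies \emph{beyond} the upper integration limit, so the Gaussian integral is itself a far tail, and a Mills-ratio estimate recovers the bound $e^{\delta\mu nQ(\sigma y)/x}$. Your proposal never makes this dichotomy, and without it the step fails. A secondary consequence of splitting at $y/2$ rather than at $\delta y$: the hazard-rate constant degrades from $(r+\delta)/(1-\delta)$ to $2(r+\delta)$, and the $A^c$ argument needs the centre $m$ to exceed the integration limit, which the paper secures precisely because $(r+\delta)/(1-\delta)<1$ (i.e. $r+2\delta<1$); with the factor $2(r+\delta)$ the analogous inequality is not automatic for $r\ge 1/2$, so the small-$\delta$ split is not merely cosmetic.
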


\begin{proof}
Let $\beta_\epsilon(t) = b^{-1}(2(1+\epsilon)t/\mu)$ and $\overline{V}(t) = P( X_1 > \sigma t + \mu) = \overline{F}(\sigma t+\mu)$. Then from \eqref{eq:Jint} we obtain that
\begin{align*}
J(y,n) &\leq   n\overline{V}(y) + n \int_{0}^1 \overline{V}(y-\sqrt{n}z) \Phi'\left( z \right) dz  + Cn  \int_{1}^{((y-\sqrt{n})\wedge \beta_\epsilon(\mu n))/\sqrt{n}} \overline{V}(y-z\sqrt{n}) e^{nQ_{\tilde\kappa} \left( \frac{z}{\sqrt{n}} \right) } dz.   
\end{align*}
To analyze the integral involving $Q_\kappa$ first note that if $\kappa = 2$, then $n Q_\kappa (z/\sqrt{n}) = -z^2/2$, while if $\kappa > 2$ then $nQ_\kappa(z/\sqrt{n}) = -z^2/2 + O(z^3/\sqrt{n})$. Therefore, for $1 \leq z \leq ((y-\sqrt{n}) \wedge \beta_\epsilon(\mu n))/\sqrt{n}$ and $n$ sufficiently large we have
$$nQ_\kappa(z/\sqrt{n}) \leq -  \frac{(1-\delta) z^2}{2},$$
from where it follows that
$$J(y,n) \leq n \overline{V}(y) + C n \int_0^{ ((y-\sqrt{n}) \wedge \beta_\epsilon(\mu n))/\sqrt{n}} \overline{V}(y-z\sqrt{n}) e^{-\frac{(1-\delta) z^2}{2}} dz.$$

We now bound the remaining integral with
\begin{align}
&Cn \int_0^{\delta y/\sqrt{n}} \overline{V}(y- z\sqrt{n}) e^{-\frac{(1-\delta) z^2}{2}} dz \label{eq:FirstInt} \\
&\hspace{3mm} + Cn \int_{\delta y/\sqrt{n}}^{(y-\sqrt{n})/\sqrt{n}} \overline{V}(y-z\sqrt{n}) e^{-\frac{(1-\delta) z^2}{2}} dz \, \Indicator(\beta_\epsilon(\mu n) > \delta y) \label{eq:SecondInt} 
\end{align}
We start by analyzing \eqref{eq:SecondInt}, which is further bounded by
\begin{align*}
&C n \overline{V}(\sqrt{n}) \int_{\delta y/\sqrt{n}}^{(y-\sqrt{n})/\sqrt{n}}  e^{-\frac{(1-\delta) z^2}{2}} dz \, \Indicator(\beta_\epsilon(\mu n) > \delta y) \\
&\leq C n \overline{V}(\sqrt{n}) \Phi\left( -\frac{\sqrt{1-\delta}\delta y}{\sqrt{n}} \right)  \, \Indicator(\beta_\epsilon(x) > \delta y) \\
&\leq C n \overline{V}(\sqrt{n}) \frac{\sqrt{n}}{y} e^{-\frac{(1-\delta) \delta^2 y^2}{2n}}  \, \Indicator(\beta_\epsilon(x) > \delta y) \\
&= \frac{C n^{3/2}}{y} \overline{V}(\sqrt{n}) e^{-\frac{\sigma y Q(x)}{x}} e^{-\frac{(1-\delta) \delta^2 y^2}{2n} \left(1 - \frac{2\sigma n Q(x)}{(1-\delta)\delta^2 x y} \right) }   \, \Indicator(\beta_\epsilon(x) > \delta y),
\end{align*}
where in the second inequality we used the relation $\Phi(-z) \leq \Phi'(z)/z$ for $z > 0$. To obtain the second term in the statement of the lemma note that for $n \leq (x- c b^{-1}(x))/\mu$ we have
$$\frac{2\sigma n Q(x)}{(1-\delta) \delta^2 xy} \leq \frac{2\sigma^2 Q(x)}{(1-\delta)\delta^2 \mu  cb^{-1}(x)},$$
which converges to zero as $x \to \infty$ by Lemma \ref{L.b_and_w} (a.). Then, for sufficiently large $x$,
$$e^{-\frac{(1-\delta) \delta^2 y^2}{2n} \left(1 - \frac{2\sigma n Q(x)}{(1-\delta)\delta^2 x y} \right) } \leq   e^{-\frac{(1-\delta)^2 \delta^2 y^2}{2n}  }  \leq e^{-\frac{ \delta^2 y^2}{8n}  } \qquad (\delta < 1/2). $$
Also, by Lemma \ref{L.g_properties} (c.), $\beta_\epsilon(x) \leq (4\mu^{-1} \vee 1) b^{-1}(x) = \sigma^{-1} \delta c_\delta b^{-1}(x)$. If follows that \eqref{eq:SecondInt} is bounded by
$$\frac{C n^{3/2}}{y} \overline{V}(\sqrt{n}) e^{-\frac{\sigma y Q(x)}{x} - \frac{\delta^2 y^2}{8n} } \Indicator(\mu n> x-  c_\delta b^{-1}(x) ).$$

To bound \eqref{eq:FirstInt} we first note that by Assumption \ref{A.Hazard}, $q(t) \leq (r+\delta) Q(t)/t$ for sufficiently large $t$. Also, by Proposition 3.7 in \cite{Bal_Dal_Klupp_04}, $Q(t)/t$ is eventually decreasing, so we obtain 
$$\overline{V}(y - u) =  \overline{V}(y) e^{\int_{\sigma (y-u)+\mu}^{\sigma y+\mu} q(t) dt} \leq  \overline{V}(y) e^{(r+\delta) \frac{Q(\sigma (y-u))}{ y-u} u}.$$ 
Then, the change of variables $u = z\sqrt{n}$ yields the bound
\begin{align}
&C \sqrt{n} \, \overline{V}(y) \int_{0}^{\delta y} e^{\int_{\sigma (y - u)+\mu}^{\sigma y+\mu} q(t) dt } e^{-\frac{(1-\delta) u^2}{2 n}} du \notag \\
&\leq C \sqrt{n} \, \overline{V}(y) \int_{0}^{\delta y} e^{ (r+\delta) \frac{Q(\sigma (y - u))}{y - u} u -\frac{(1-\delta) u^2}{2 n}} du \notag \\
&\leq  C \sqrt{n} \, \overline{V}(y) \int_{0}^{\delta y} e^{ (r+\delta) \frac{Q(\sigma y)}{(1-\delta) y} u -\frac{(1-\delta) u^2}{2 n}} du \notag \\
&= C n \overline{V}(y) e^{\frac{ (r+\delta)^2 n Q(\sigma y)^2}{2 (1-\delta)^3 y^2}} \int_{-\frac{\sqrt{1-\delta}}{ \sqrt{n}} \cdot \frac{(r+\delta) n Q(\sigma y)}{(1-\delta)^2 y} }^{\frac{\sqrt{1-\delta}}{ \sqrt{n}} \left( \delta y - \frac{(r+\delta) n Q(\sigma y)}{(1-\delta)^2 y}  \right)  } \frac{1}{\sqrt{2\pi}} \, e^{ -\frac{z^2}{2} } dz. \label{eq:ExpNormal}
\end{align}
Now, define the set $A = \left\{ \sigma y \geq \omega_1^{-1} \left( \frac{(r+\delta)^2\sigma^2x}{(1-\delta)^3\delta \mu} \right) \right\}$ and note that $t^2/Q(t) = \omega_1(t)$ is eventually increasing. It follows that for large enough $x$, 
$$A \subseteq \left\{ \omega_1(\sigma y) \geq \frac{(r+\delta)^2\sigma^2x}{(1-\delta)^3\delta \mu} \right\} = \left\{ \frac{(r+\delta)^2 Q(\sigma y)}{(1-\delta)^3y^2} \leq \frac{\delta \mu}{x} \right\}.$$
Also, 
\begin{align*}
A^c &\subseteq \left\{ \omega_1(\sigma y) \leq \frac{(r+\delta)^2\sigma^2x}{(1-\delta)^3\delta \mu} \right\} = \left\{ \frac{(r+\delta) nQ(\sigma y)}{(1-\delta)^2y} \geq \frac{\delta y (1-\delta) n \mu}{(r+\delta) x} \right\}.
\end{align*}
We then have that for $z(x,n) = \frac{\sqrt{1-\delta}}{ \sqrt{n}} \left( \frac{(r+\delta) n Q(\sigma y)}{(1-\delta)^2 y} - \delta y \right)$, \eqref{eq:ExpNormal} is bounded by
\begin{align*}
&Cn\overline{V}(y) \left\{ e^{\frac{\delta \mu n Q(\sigma y)}{2x}} \Indicator(A) + e^{\frac{ (r+\delta)^2 n Q(\sigma y)^2}{2 (1-\delta)^3 y^2} - \frac{z^2(x,n)}{2}} \sup_{t \geq z(x,n)} \frac{\Phi(-t)}{\Phi'(t)} \Indicator(A^c) \right\} \\
&\leq Cn\overline{V}(y) \left\{ e^{\frac{\delta \mu n Q(\sigma y)}{2x}} \Indicator(A) + e^{ \frac{\delta (r+\delta) Q(\sigma y)}{(1-\delta)} - \frac{(1-\delta) \delta^2 y^2}{2n} } \sup_{t \geq \frac{\sqrt{1-\delta}\delta y}{ \sqrt{n}} \left( \frac{ (1-\delta) n \mu}{(r+\delta) x}  - 1 \right)} \frac{\Phi(-t)}{\Phi'(t)} \Indicator(A^c) \right\}.
\end{align*}
Finally, we note that on $A^c = \left\{ n\mu > x-\omega_1^{-1} \left( \frac{(r+\delta)^2\sigma^2x}{(1-\delta)^3\delta \mu} \right) \right\}$, and for sufficiently large $x$, $(1-\delta)n\mu/((r+\delta)x) \geq 1$, so \eqref{eq:ExpNormal} is bounded by
$$Cn\overline{V}(y) \left\{ e^{\frac{\delta \mu n Q(\sigma y)}{x}} \Indicator(A) + e^{ \frac{\delta \mu n Q(\sigma y)}{x}  } \sup_{t \geq 0} \frac{\Phi(-t)}{\Phi'(t)} \Indicator(A^c) \right\} \leq Cn\overline{V}(y) e^{\frac{\delta \mu n Q(\sigma y)}{x}}.$$
\end{proof}

\bigskip

%%% Third error

\begin{prop} \label{P.E3}
Under the assumptions of Theorem \ref{T.SumApprox}, 
$$\lim_{x \to \infty} \sup_{0 < \rho < 1} \frac{E_3(\rho,x)}{Z_\kappa(\rho,x)}  = 0.$$
\end{prop}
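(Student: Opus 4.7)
The plan is to bound $J(y,n)$ using Lemma \ref{L.In_Bound} and to control each of the resulting contributions to $E_3(\rho,x)$ separately, mirroring the strategy of Propositions \ref{P.E1} and \ref{P.E2}.

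First I would check that the indicator $\Indicator(y > (1-\epsilon)C_n)$ in $E_3$ is compatible with the hypothesis $n\mu \leq x - c b^{-1}(x)$ of Lemma \ref{L.In_Bound}. Since $C_n \in [\gamma_1 b^{-1}(\mu n), \gamma_2 b^{-1}(\mu n)]$ by Lemma \ref{L.UglyTail}, and the summation is over $n\geq K_r(x)+1$ so that $\mu n$ is already close enough to $x$ for $b^{-1}(\mu n)$ to be comparable to $b^{-1}(x)$ via Lemma \ref{L.g_properties}(c.), the indicator forces $n\mu \leq x - c b^{-1}(x)$ for some $c=c(\epsilon)>0$. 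For the very small $n$ where this comparability fails one has $y$ of order $x$, which trivially satisfies $y\geq cb^{-1}(x)$. Fix a small $\delta\in(0,1/2)$; then Lemma \ref{L.In_Bound} applies throughout the sum and gives $E_3(\rho,x) \leq E_3^{(a)}(\rho,x) + E_3^{(b)}(\rho,x)$ corresponding to its two bounding terms.

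For $E_3^{(a)}$, the bound is
$$E_3^{(a)}(\rho,x) \;\leq\; C \sum_{n = K_r(x)+1}^{\lfloor (x-cb^{-1}(x))/\mu\rfloor} (1-\rho)\rho^n\, n\,\overline{F}(x-n\mu+\mu)\, e^{\delta\mu n Q(x-n\mu)/x},$$
which is structurally an extension of the heavy-tail sum in $Z_\kappa(\rho,x)$ past $K_r(x)$, with an extra exponential perturbation. The plan is to split at $\hat\rho(x) = e^{-c'\mu Q(x)/x}$: for $\rho\leq\hat\rho(x)$, apply Lemma \ref{L.UniformBound} to replace the geometric weight by $e^{Q(x)-\mu n Q(x)/x}$ and then use the defining property of $K_r(x)$ (via $\omega_2^{-1}$ for $r<1/2$, or the minimum with $x/(2\mu)$ for $r\geq1/2$) to show that the resulting $\rho$-free sum is $o(1)$; for $\hat\rho(x)\leq\rho<1$, apply the lower bound $Z_\kappa(\rho,x)\geq C e^{\frac{x}{\mu}\Lambda_\rho(u(\rho))}$ of Lemma \ref{L.LowerBound} and absorb the geometric and perturbation factors into the quadratic-in-$\log\rho$ exponent as in the second half of the proof of Proposition \ref{P.E2}.

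For $E_3^{(b)}$, the summation is further confined to the narrow window $x-c_\delta b^{-1}(x) < \mu n < x$, and the integrand carries the Gaussian-type factor $e^{-\delta^2 y^2/(8n)}$. Inside this window $y^2/n$ is of order $\mu(b^{-1}(x))^2/x = \mu\tilde Q(b^{-1}(x))$, and the additional factor $e^{-\sigma y Q(x)/x}$ is harmless since $Q(x)/b^{-1}(x)\to 0$ by Lemma \ref{L.b_and_w}(a.). The same $\rho\leq\hat\rho(x)$ versus $\rho>\hat\rho(x)$ dichotomy is applied, using Lemma \ref{L.UniformBound} and Lemma \ref{L.LowerBound} respectively, and in each range the Gaussian decay, together with the length of the window being at most $c_\delta b^{-1}(x)/\mu$, furnishes the bound $E_3^{(b)}(\rho,x)=o(Z_\kappa(\rho,x))$.

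The main obstacle is the perturbation factor $e^{\delta\mu n Q(x-n\mu)/x}$ in $E_3^{(a)}$: the summation range $n>K_r(x)$ is precisely where this factor ceases to be obviously harmless, so a careful use of the definition of $K_r(x)$ is required to bound $Q(x-n\mu)$ and ensure that the geometric weight $\rho^n$ (or, after invoking Lemma \ref{L.UniformBound}, the factor $e^{-\mu n Q(x)/x}$) still dominates. A case split on $r\in[0,1/2)$ versus $r\in[1/2,1)$, tracking the two definitions of $K_r(x)$, is likely unavoidable in the formal write-up.
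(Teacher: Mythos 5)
Your overall architecture is the same as the paper's: use the indicator $\Indicator(y>(1-\epsilon)C_n)$ together with $C_n\geq\gamma_1 b^{-1}(\mu n)$ to confine the sum to $K_r(x)<n\leq (x-c b^{-1}(x))/\mu$, invoke Lemma \ref{L.In_Bound}, and then reduce the supremum over $\rho$ via Lemma \ref{L.UniformBound}. (One organizational remark: Lemma \ref{L.UniformBound} already yields a bound uniform over \emph{all} $0<\rho<1$, so your separate branch for $\hat\rho(x)\leq\rho<1$ via Lemma \ref{L.LowerBound} is redundant, and in fact harder to execute than you suggest, since the numerator there is still an $n$-sum involving $\overline F(x-n\mu)$ rather than a single exponential as in Proposition \ref{P.E2}.) Your treatment of $E_3^{(b)}$ is essentially complete and matches the paper's: Gaussian decay over a window of length $O(b^{-1}(x))$, plus $x\,\overline F(\sigma\sqrt{m_\delta(x)})\to 0$ from $E[X_1^2]<\infty$ and $Q(x)/b^{-1}(x)\to 0$.

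The genuine gap is in $E_3^{(a)}$. After Lemma \ref{L.UniformBound} the quantity to control is (with $u=x-n\mu$) an integral of $Q(x)\,e^{uQ(x)/x-Q(u)+\delta(x-u)Q(u)/x}$ over $c\,b^{-1}(x)\lesssim u\leq x-\mu K_r(x)$, and you do not say why this exponent is negative enough; ``the defining property of $K_r(x)$'' only helps at the very top of the $u$-range (and only when $r\in[1/2,1)$, where $x-\mu K_r(x)$ can be of order $x$). The actual mechanism is Proposition 3.7 of \cite{Bal_Dal_Klupp_04}, $Q(x)\leq (x/u)^{r+\delta}Q(u)$, which combined with $(u/x)^{1-r-\delta}-1\leq(1-r-\delta)(u/x-1)$ gives
$$\frac{uQ(x)}{x}-Q(u)+\frac{\delta(x-u)}{x}Q(u)\;\leq\;-\frac{Q(u)(x-u)}{x}\,(1-r-2\delta),$$
so that $\delta$ must be chosen smaller than $(1-r)/2$ \emph{and} small enough that $\beta(1-2\delta)>2$ (with $\beta$ from \eqref{eq:LowerBoundQ}): the latter is what makes the contribution near the lower endpoint $u\asymp b^{-1}(x)$, where the bound degenerates to $e^{-(1-2\delta)Q(u)}\leq u^{-\beta(1-2\delta)}$, integrate to $O(1/b^{-1}(x))=o(1/Q(x))$. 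Without this inequality and this constraint on $\delta$, the perturbation $e^{\delta\mu nQ(\sigma y)/x}$ is not visibly dominated, and the claimed $o(1)$ does not follow from the definition of $K_r(x)$ alone. Supplying this step (and the resulting three-way split of the $u$-integral) is the bulk of the proof and must be made explicit.
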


\begin{proof}
Set $y = (x-n\mu)/\sigma$ and recall that by assumption there exists $\beta > a(r) \geq 2$ such that $Q(t) \geq \beta \log t$ for all sufficiently large $t$. Now choose $0 < \delta < \min\{ (1-\tilde r)/2, (1-r-2/\beta)/2 \}$. Note that by Lemma \ref{L.UglyTail} there exists a constant $0< \gamma_1 \leq 1$ such that $C_n \geq \gamma_1 b^{-1}(\mu n)$.  Define $c_\epsilon = (1-\epsilon)^2 \gamma_1 \sigma$ and $c_\delta = \delta^{-1} (4\mu^{-1} \vee 1)\sigma$. Then, for any $n \geq l_\epsilon(x) \triangleq (x - c_\epsilon b^{-1}(x))/\mu$ and $x$ sufficiently large,
$$y \leq (1-\epsilon)^2 \gamma_1 b^{-1}(x) \leq  \left( \frac{x}{\mu n} \vee 1 \right) (1-\epsilon)^2 \gamma_1 b^{-1}(\mu n) < (1-\epsilon) C_n.$$
Therefore, $\{y > (1-\epsilon) C_n\} \subset \{ n < l_{\epsilon}(x) \}$, and
\begin{align*}
E_3(\rho,x) &\leq  \sum_{n=K_r(x)+1}^{\lfloor l_{\epsilon}(x) \rfloor} (1-\rho) \rho^n  J(y,n) .
\end{align*}
By Lemma \ref{L.UniformBound},
\begin{equation} \label{eq:ThirdError}
\sup_{0 < \rho < 1} \frac{E_3(\rho,x)}{Z_\kappa(\rho,x)}  \leq  \frac{C Q(x)}{x } \sum_{n = K_r(x)+1}^{\lfloor l_{\epsilon}(x) \rfloor} e^{\frac{\sigma y Q(x)}{x} } J(y,n) ,
\end{equation}
for sufficiently large $x$. Define $m_{\delta}(x) = (x - c_\delta b^{-1}(x))/\mu$. Then, by  Lemma \ref{L.In_Bound},  
\begin{align}
\frac{Q(x)}{x} \sum_{n=K_r(x)+1}^{\lfloor l_{\epsilon}(x) \rfloor} e^{\frac{\sigma y Q(x)}{x}} J(y,n) &\leq \frac{C Q(x)}{x} \sum_{n=K_r(x)+1}^{\lfloor l_{\epsilon}(x) \rfloor} e^{\frac{\sigma y Q(x)}{x}} n \overline{F}(\sigma y+\mu) e^{\frac{\delta \mu n}{x} Q(\sigma y)} \label{eq:delta_Int}  \\
&\hspace{3mm} +  \frac{C Q(x)}{x} \sum_{n=\lfloor m_{\delta}(x) \rfloor +1}^{\lfloor l_{\epsilon}(x) \rfloor} \frac{n^{3/2}}{y} \overline{F}(\sigma\sqrt{n}+\mu) e^{ - \frac{\delta^2 y^2}{8n} }. \label{eq:b_Int} 
\end{align}

We start by showing that \eqref{eq:delta_Int} converges to zero. To do so we first bound it with the following integral
\begin{align}
&C Q(x)  \int_{x- \lfloor l_\epsilon(x) \rfloor \mu}^{x-K_r(x)\mu} e^{\frac{u Q(x)}{x}} \overline{F}(u) e^{\frac{\delta (x-u+\mu)}{x} Q(u)} du \notag \\
&\leq C Q(x)  \int_{x-\mu l_{\epsilon}(x)}^{x-\mu K_r(x)} e^{\frac{u Q(x)}{x} - Q(u) + \frac{\delta (x- u)}{x} Q(u)} du. \label{eq:simplerInt}
\end{align}
Now, by Proposition 3.7 in \cite{Bal_Dal_Klupp_04} we have that $Q(x) \leq (x/u)^{r+\delta} Q(u)$ for all $u \leq x$, from where it follows that
\begin{align*}
\frac{uQ(x)}{x} - Q(u) +  \frac{\delta (x-u)}{x} Q(u)  &\leq Q(u) \left( \left(\frac{u}{x}\right)^{1-r-\delta} - 1+  \frac{\delta(x-u)}{x} \right) \\
&\leq Q(u) \left( (1-r-\delta) \left( \frac{u}{x} - 1 \right) + \frac{\delta(x-u)}{x} \right) \\
&= - \frac{Q(u)(x-u)}{x} \left( 1-r-2\delta \right). 
\end{align*}
Let $\eta = \delta^{1/(1-r-\delta)}$. Next we will split \eqref{eq:simplerInt} into three integrals and use one of the above inequalities to bound the exponent as follows
\begin{align}
&C Q(x) \int_{x-\mu l_{\epsilon}(x)}^{\min\{x-\mu K_r(x),\eta x\}} e^{Q(u) \left( \left(\frac{u}{x}\right)^{1-r-\delta} - 1+  \frac{\delta(x-u)}{x} \right)} du \label{eq:small_u} \\
&\hspace{3mm} + C Q(x)  \int_{\min\{x-\mu K_r(x), \eta x\}}^{\min\{x-\mu K_r(x),x/2+\mu\}} e^{- \frac{Q(u)(x-u)}{x} \left( 1-r-2\delta \right)} du \label{eq:inter_u} \\
&\hspace{3mm} + C Q(x) \int_{\min\{ x-\mu K_r(x), x/2+\mu\}}^{x-\mu K_r(x)} e^{- \frac{Q(u)(x-u)}{x} \left( 1-r-2\delta \right)} du. \label{eq:large_u}
\end{align}
To see that \eqref{eq:small_u} converges to zero we note that it is bounded by
\begin{align*}
C Q(x) \int_{x-\mu l_{\epsilon}(x)}^{\eta x} e^{-Q(u) \left( 1-2\delta  \right)} du &\leq CQ(x) \int_{x-\mu l_{\epsilon}(x)}^\infty e^{-\beta (1-2\delta) \log u} du \\
&= C Q(x) (x-\mu l_{\epsilon}(x))^{-\beta(1-2\delta) +1} \\
&\leq \frac{C Q(x)}{x-\mu l_{\epsilon}(x)} \quad \left( \text{since } \beta(1-2\delta) > 2 \right) \\
&\leq \frac{C Q(x)}{b^{-1}(x)},
\end{align*}
where the last expression converges to zero by Lemma \ref{L.b_and_w} (a.). To see that \eqref{eq:inter_u} converges to zero note that it is bounded by
\begin{align*}
&C Q(x)  \int_{\eta x}^{x/2+\mu} e^{- \frac{Q(\eta x)(x-u)}{x} \left( 1-r-2\delta \right)} du \, \Indicator(x-\mu K_r(x) > \eta x) \\
&= \frac{C x Q(x)}{Q(\eta x)} \int_{-(1-r-2\delta) (1-\eta) Q(\eta x)}^{-\frac{(1-r-2\delta)}{2} (1-2\mu/x)Q(\eta x) } e^v dv \, \Indicator(x-\mu K_r(x) > \eta x) \\
&\leq \frac{C x Q(x)}{Q(\eta x)} e^{- \frac{(1-r-2\delta)}{2} Q(\eta x)} \leq C x e^{-\frac{(1-r-2\delta)\beta}{2} \log (\eta x)},
\end{align*}
where in the last inequality we used Proposition 3.7 in \cite{Bal_Dal_Klupp_04} to obtain $Q(x) \leq \eta^{-(r+\delta)} Q(\eta x)$ and then the assumption $Q(t) \geq \beta \log t$. The last thing to notice is that our choice of $\delta$ guarantees that $(1-r-2\delta)\beta/2 > 1$. 

Next, to analyze \eqref{eq:large_u} we follow a similar approach and use the fact that $Q(t)/t$ is eventually decreasing to obtain the bound
\begin{align}
&C Q(x) \int_{x/2+\mu}^{x-\mu K_r(x)}  e^{- \frac{Q(x)u(x-u)}{x^2} \left( 1-r-2\delta \right)} du \, \Indicator(x-\mu K_r(x) > x/2+\mu) \notag \\
&\leq   C Q(x) \int_{x/2}^{x-\mu K_r(x)} e^{- \frac{\mu Q(x)K_r(x) u}{x^2} \left( 1-r-2\delta \right)} du \, \Indicator(\mu (K_r(x)+1) < x/2) \notag \\
&\leq \frac{C x^2}{K_r(x)} e^{- \frac{\mu Q(x) K_r(x) }{2x} \left( 1-r-2\delta \right)} \, \Indicator(\mu (K_r(x)+1) < x/2) \label{eq:semiexp} .
\end{align}
Now note that $\mu(K_r(x) + 1) < x/2$ implies that $r \in [1/2,1)$, since for $r \in (0,1/2)$ we have $x-\mu K_r(x) = o(x)$; and in this case, 
$$\{ \mu (K_r(x)+1) < x/2 \} \subset \{ \min\{ \mu\omega_2(x), x/2 \} < x/2\} = \{ \mu \omega_2(x) < x/2\}.$$ 
 It follows that \eqref{eq:semiexp} is bounded by
 \begin{align*}
\frac{C x^2}{\omega_2(x)} e^{-\frac{\mu Q(x) \omega_2(x)}{2x} (1-r-2\delta)} \Indicator(\mu \omega_2(x) < x/2) &\leq C Q(x)^2 e^{-\frac{\mu x}{2 Q(x)} (1-r-2\delta)}  \\
&\leq C x^2 e^{-\frac{\mu (1-r-2\delta)}{2} x^{1-r-\delta}} \to 0,
\end{align*}
where in the second inequality we used Proposition 3.7 in \cite{Bal_Dal_Klupp_04} to obtain that  $Q(x) \leq x^{r+\delta}$ for large enough $x$. 

Finally, to prove that \eqref{eq:b_Int} converges to zero we first bound it with
\begin{align*}
& \frac{C Q(x)}{x} \int_{m_{\delta}(x)\mu}^{(l_{\epsilon}(x)+1)\mu} \frac{s^{3/2}}{x-s} \overline{F}(\sigma\sqrt{s/\mu}) e^{ - \frac{\delta^2 \mu (x-s)^2}{8\sigma^2 s} } ds \\
&\leq \frac{C \sqrt{x}\, Q(x)}{x-(l_{\epsilon}(x)+1)\mu}  \overline{F}\left(\sigma \sqrt{m_{\delta}(x)} \right) \int_{m_{\delta}(x) \mu}^{(l_{\epsilon}(x)+1) \mu}   e^{ - \frac{\delta^2 \mu (x-s)^2}{8\sigma^2 x} } ds  \\
&\leq \frac{C x \, Q(x)}{ b^{-1}(x)}  \overline{F}\left(\sigma \sqrt{m_{\delta}(x)} \right)  \int_{\delta\sqrt{\mu} (x-(l_\epsilon(x)+1)\mu)/(2\sigma\sqrt{x}) }^\infty   e^{ - \frac{z^2}{2} } dz .
\end{align*}
Clearly, the last integral is bounded by a constant, and for the other terms we have
$$\lim_{x \to \infty} x  \overline{F}\left(\sigma \sqrt{m_{\delta}(x)} \right) = \frac{\mu}{\sigma^2} \lim_{t \to \infty}  t^2 \overline{F}(t) = 0,$$
since $E[X_1^2] < \infty$, and, by Lemma \ref{L.b_and_w} (a.), $\lim_{x \to \infty} Q(x)/b^{-1}(x) = 0$. 
This completes the proof. 
\end{proof}

\bigskip

\begin{proof}[Proof of Theorem \ref{T.SumApprox}]
Propositions \ref{P.E1}, \ref{P.E2} and \ref{P.E1} give
$$\lim_{x \to \infty} \sup_{0 < \rho < 1}  \left| \frac{S_\kappa(\rho,x)}{Z_\kappa(\rho,x)} - 1 \right| = 0,$$
which combined with Proposition \ref{P.UglyApprox} give
$$\lim_{x \to \infty} \sup_{0 < \rho < 1}  \left| \frac{P(W_\rho(\infty) > x)}{Z_\kappa(\rho,x)} - 1 \right| = 0.$$
\end{proof}

%%%% Section MainProof
\section{Proof of Theorem \ref{T.Main}} \label{S.MainProof}

In this section we prove Lemma \ref{L.ustar} and Theorem \ref{T.Main}. To ease the reading we restate the definition of $A_\kappa(\rho,x)$ below.
\begin{equation*} 
A_\kappa(\rho,x) = \sum_{n=1}^{K_r(x)} (1-\rho) \rho^n n \overline{F}(x-n\mu) + e^{\frac{x}{\mu} \Lambda_\rho (w(\rho,x))} ,
\end{equation*}
where $\Lambda_\rho$ is given by \eqref{eq:LambdaDef}, $w(\rho,x) = \min\{ u(\rho), \omega_1^{-1}(x)/x\}$ and $u(\rho)$ is the smallest positive solution to $\Lambda_\rho'(t) = 0$.  

We start with the proof of Lemma \ref{L.ustar} and then split the proof of Theorem \ref{T.Main} into three parts. 

\begin{proof}[Proof of Lemma \ref{L.ustar}]
That $\Lambda_\rho$ is concave in a neighborhood of the origin follows from 
$$\Lambda_\rho'(t) = -\log\rho - \frac{\mu^2}{\sigma^2} t + O(t^2) \quad \text{and} \quad \Lambda_\rho''(t) = - \frac{\mu^2}{\sigma^2} + O(t).$$

If $\kappa = 2$ we have $\Lambda_\rho(t) = (1-t)\log\rho - \frac{\mu^2}{2\sigma^2} t^2$, which is maximized at $u(\rho) = -\frac{\sigma^2}{\mu^2} \log\rho$ and satisfies $\Lambda_\rho(u(\rho)) = \log\rho + \frac{\sigma^2}{2\mu^2}(\log\rho)^2 $. 

In general, for  $\kappa \geq 2$ recall that $P_\kappa(t) = \Lambda_\rho'(t) + \log\rho$, so $u(\rho)$ is the solution to the equation $P_\kappa(t) = \log\rho$. By Lagrange's inversion theorem,
$$u(\rho) = \sum_{n=1}^\infty \frac{d^{n-1}}{d t^{n-1}} \left. \left( \frac{t}{P_\kappa(t)} \right)^n \right|_{u = 0} \frac{(\log\rho)^n}{n!},$$
where
$$\frac{P_\kappa(t)}{t} = \sum_{i=2}^\kappa \sum_{j=2}^i \frac{\lambda_j \mu^j}{j! \sigma^j} \binom{i-1}{i-j} i t^{i-2} \triangleq \sum_{j=0}^{\kappa-2} a_j t^{j} .$$
Furthermore, by Fa\`{a} di Bruno's formula, 
\begin{align*}
b_n &= \frac{d^{n-1}}{d t^{n-1}} \left. \left( \frac{t}{P_\kappa(t)} \right)^n \right|_{t = 0} \\
&= \sum_{(m_1, \dots, m_{n-1}) \in \mathcal{A}_{n-1}} (n+s_{n-1}-1)! (-1)^{s_{n-1}} a_0^{-n-s_{n-1}}  \prod_{j=1}^{n-1} \frac{1}{m_j!} (a_j 1_{(j \leq \kappa-2)})^{m_j},
\end{align*}
where $\mathcal{A}_j = \{ (m_1, \dots, m_{j}) \in \mathbb{N}^{j} : \, 1m_1 + 2 m_2 + \dots + j m_j = j \}$, $s_j = m_1 + \dots + m_j$, and $a_0 = - \frac{\mu^2}{\sigma^2}$. Note that $b_1 = -\sigma^2/\mu^2$. Finally, since
$$\Lambda_\rho(t) = (1-t) \log \rho - \frac{\mu^2 t^2}{2\sigma^2} + O( t^3),$$
we have
$$\Lambda_\rho(u(\rho)) = \log\rho + \frac{\sigma^2}{2\mu^2}( \log \rho)^2  + O (|\log\rho|^3).$$
\end{proof}

\bigskip

We now prove two preliminary results before we proceed to the proof of Theorem~\ref{T.Main}. 

\begin{lem} \label{L.Lambda}
Let $\Lambda_\rho$ be given by \eqref{eq:LambdaDef} and set $u_n = (x-n\mu)/x$. Then, 
$$\sup_{0 < \rho < 1} \left|  \frac{ \sum_{n=M(x)+ 1}^{N(x)} \rho^n \frac{e^{n Q_\kappa\left( \frac{x-n\mu}{\sigma n} \right)}}{x-n\mu}}{ \sum_{n= M(x)+ 1}^{N(x)} \frac{e^{\frac{x}{\mu} \Lambda_\rho(u_n)}}{ x u_n}} - 1 \right| \to 0$$
as $x \to \infty$.
\end{lem}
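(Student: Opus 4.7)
The plan is to prove the lemma by showing that the \emph{termwise} ratio of the two sums converges to $1$ uniformly in $n$ and---crucially---in $\rho$, which then propagates to the ratio of the sums.

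First I would use the identity $x-n\mu = xu_n$, so that the $1/(x-n\mu)$ and $1/(xu_n)$ factors match term by term, and focus on the ratio
$$r_n(\rho,x) := \frac{\rho^n\, e^{n Q_\kappa((x-n\mu)/(\sigma n))}}{e^{(x/\mu)\Lambda_\rho(u_n)}}.$$
Taking logarithms and using $n = (x/\mu)(1-u_n)$, the $(1-u_n)\log\rho$ piece of $\Lambda_\rho$ exactly cancels $n\log\rho$, so
$$\log r_n(\rho,x) = \frac{x}{\mu}\bigl[\phi(u_n) - \phi_\kappa(u_n)\bigr],$$
where $\phi(t) := (1-t)Q_\kappa(\mu t/(\sigma(1-t)))$ satisfies $n Q_\kappa((x-n\mu)/(\sigma n)) = (x/\mu)\phi(u_n)$ identically, and $\phi_\kappa(t) := \Lambda_\rho(t) - (1-t)\log\rho$ is the polynomial in \eqref{eq:LambdaDef} with all the $\log\rho$ dependence removed. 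As indicated after \eqref{eq:LambdaDef}, routine power-series manipulation shows that $\phi_\kappa$ coincides with the degree-$\kappa$ Taylor polynomial of $\phi$ at $0$. The key observation at this stage is that $\log r_n(\rho,x)$ is \emph{independent of} $\rho$.

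Next I would bound the Taylor remainder uniformly. Since $\phi$ is real-analytic on $[0,1)$, there is a constant $C$ (depending only on $\kappa$, $\mu$, $\sigma$, and the coefficients $\lambda_2,\dots,\lambda_\kappa$) such that $|\phi(t) - \phi_\kappa(t)| \leq C\, t^{\kappa+1}$ for all $t \in [0,1/2]$. For $n \in (M(x), N(x)]$ we have $0 \leq u_n \leq (\omega_1^{-1}(x)+\mu)/x$, which by Lemma \ref{L.b_and_w}(a) tends to $0$ as $x\to\infty$, so $u_n$ lies in this range of validity eventually. Therefore
$$\sup_{0<\rho<1}\ \sup_{M(x) < n \leq N(x)} |\log r_n(\rho,x)| \;\leq\; \frac{C'\, (\omega_1^{-1}(x))^{\kappa+1}}{x^{\kappa}}.$$
To show this upper bound vanishes, set $y = \omega_1^{-1}(x)$; then $y^2 = xQ(y)$ (for $x$ large, by Lemma \ref{L.RightInverse}(b) and the fact that $Q(y)\geq 1$ eventually), so
$$\frac{(\omega_1^{-1}(x))^{\kappa+1}}{x^\kappa} \;=\; \frac{y^{\kappa+1}}{(y^2/Q(y))^\kappa} \;=\; \frac{Q(y)^\kappa}{y^{\kappa-1}}.$$
By the very definition of $\kappa$ in \eqref{eq:kappa}, $Q(y) = o(y^{(\kappa-1)/\kappa})$ as $y\to\infty$, whence $Q(y)^\kappa = o(y^{\kappa-1})$ and the displayed quantity tends to $0$. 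Consequently $\sup_{\rho,n} |r_n(\rho,x) - 1| \to 0$.

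Finally, uniform convergence of the positive termwise ratios to $1$ transfers immediately to the ratio of the sums: if $|a_n - b_n| \leq \epsilon\, b_n$ for every $n$ in the index set, with $b_n > 0$, then $|\sum_n a_n - \sum_n b_n| \leq \epsilon \sum_n b_n$, which gives the claim uniformly in $\rho$. The only delicate step is the third one, the reduction of the Taylor-remainder bound to $Q(y)^\kappa/y^{\kappa-1} \to 0$ via the identity $y^2 = xQ(y)$; the other pieces are cancellations or standard real-analytic estimates.
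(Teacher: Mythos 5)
Your proof is correct and follows essentially the same route as the paper: both reduce the claim to a termwise comparison by writing the first sum as $x^{-1}\sum e^{\frac{x}{\mu}\tilde\Lambda_\rho(u_n)}/u_n$, expanding $(1-t)Q_\kappa(\mu t/(\sigma(1-t)))$ into its degree-$\kappa$ Taylor polynomial (which is exactly $\Lambda_\rho(t)-(1-t)\log\rho$), and showing the remainder $\frac{x}{\mu}O(u_n^{\kappa+1})$ vanishes uniformly for $M(x)<n\le N(x)$. The only (valid) variation is in the final estimate: the paper bounds $(\omega_1^{-1}(x))^{\kappa+1}/x^{\kappa}$ using $\kappa\le(2-r)/(1-r)$ together with $\omega_1^{-1}(x)\le Cx^{1/(2-r-\delta)}$, whereas you use the identity $\omega_1(\omega_1^{-1}(x))=x$ and the definition \eqref{eq:kappa} of $\kappa$ to reduce it to $Q(y)^{\kappa}/y^{\kappa-1}\to 0$, which is arguably the more direct argument.
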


\begin{proof}
Define the function
$$\tilde \Lambda_\rho(t) = (1-t)\log\rho + (1-t) Q_\kappa\left( \frac{\mu t}{\sigma (1 - t)} \right)$$
and note that 
$$ \sum_{n=M(x)+ 1}^{N(x)} \rho^{n} \frac{e^{n Q_\kappa\left( \frac{x-n\mu}{\sigma n} \right)}}{x-n\mu} =  \frac{1}{x} \sum_{n= M(x)+ 1}^{N(x)} \frac{e^{\frac{x}{\mu} \tilde \Lambda_\rho(u_n)}}{u_n}.$$
By expanding $1/(1-t)^j$ into its Taylor series centered at zero we obtain 
\begin{align*}
(1-t) Q_\kappa \left( \frac{\mu t}{\sigma (1-t)} \right) &= \sum_{j = 2}^\kappa \frac{\lambda_j \mu^j t^j}{j! \sigma^j} \cdot \frac{1}{(1-t)^{j-1}} \\
&= \sum_{j = 2}^\kappa \frac{\lambda_j \mu^j w^j}{j! \sigma^j} \sum_{i = 0}^\infty \binom{j+ i-1 }{i} t^i \\
&= \sum_{j= 2}^\kappa \sum_{i=0}^{\kappa-j} \frac{\lambda_j \mu^j}{j! \sigma^j} \binom{j+i-1}{i} t^{i+j} + O(t^{\kappa+1}) \\
&= \sum_{j= 2}^\kappa \sum_{r=j}^{\kappa} \frac{\lambda_j \mu^j}{j! \sigma^j} \binom{r-1}{r-j} t^{r} + O(t^{\kappa+1}) \\
&= \sum_{r=2}^\kappa \sum_{j = 2}^r \frac{\lambda_j \mu^j}{j! \sigma^j} \binom{r-1}{r-j} t^{r} + O(t^{\kappa+1}).
\end{align*}
Recall from Section \ref{S.ModelDescription} (after equation \eqref{eq:kappa}) that $\kappa \leq (2-r)/(1-r)$ and $\omega_1^{-1}(x) \leq C x^{1/(2-r-\delta)}$ for any $0 < \delta < (1-r)^2/(2-r)$ and $x$ sufficiently large. It follows that for  $0 \leq t \leq u_{M(x)+1} \leq \omega_1^{-1}(x)/x$ we have
$$x t^{k+1} \leq x \left( \frac{\omega_1^{-1}(x)}{x} \right)^{\frac{2-r}{1-r} +1} \leq C x \left( x^{\frac{-(1-r-\delta)}{2-r-\delta} } \right)^{\frac{3-2r}{1-r}} = C x^{\frac{-(1-r)^2  +\delta (2-r)}{(2-r-\delta)(1-r)}} \to 0$$
as $x \to \infty$. Hence, 
$$ \sum_{n=M(x)+ 1}^{N(x)} \rho^{n} \frac{e^{n Q_\kappa\left( \frac{x-n\mu}{\sigma n} \right)}}{x-n\mu} =  \frac{1}{x} \sum_{n= M(x)+ 1}^{N(x)} \frac{e^{\frac{x}{\mu}  \Lambda_\rho(u_n)}}{u_n} (1+o(1))$$
as $x \to \infty$, uniformly in $0 < \rho < 1$.
\end{proof}

\bigskip

The second preliminary result is an application of Laplace's method, which states that the asymptotic behavior of an integral of the form
$$\int_c^d e^{-x \phi(t)} f(t) dt,$$
as $x \to \infty$, is determined by the value of the integral in a small interval around the maximizer of $\phi$ on the interval $[c,d]$. What makes the proof below very technical is that the limits of integration are functions of $x$.  

\begin{lem} \label{L.Watson}
Let $\hat\rho(x) = e^{-c\mu Q(x)/x}$, $c > 0$, $\kappa > 2$, and define $\gamma(x,\rho) = \sqrt{\mu \log x} / \sigma + \sigma\sqrt{x}\log\rho/\mu^{3/2}$. Then, under the assumptions of Theorem~\ref{T.Main}, as $x \to \infty$,
$$\sup_{\hat\rho(x) \leq \rho < 1} \frac{1}{e^{\frac{x}{\mu} \Lambda_\rho(u(\rho))}} \left|  \frac{\sigma \sqrt{x}}{\sqrt{2\pi \mu}} \sum_{n= M(x)+1}^{N(x)} (1-\rho) \rho^{n} \frac{e^{n Q_\kappa\left( \frac{x-n\mu}{\sigma n} \right)}}{x-n\mu}  -  e^{\frac{x}{\mu} \Lambda_\rho(u(\rho))} \Phi\left( -\gamma(\rho,x)  \right) \right| \to 0. $$
\end{lem}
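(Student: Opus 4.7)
The argument is a Laplace's-method computation around the maximizer $u(\rho)$ of $\Lambda_\rho$, combined with an integration-by-parts identity that produces the normal tail $\Phi(-\gamma(\rho,x))$.  By Lemma~\ref{L.Lambda}, the left-hand sum equals, uniformly in $\rho$,
$$[1+o(1)]\cdot\frac{(1-\rho)\sigma\sqrt{x}}{\mu\sqrt{2\pi\mu}}\int_{u_{N(x)}}^{u_{M(x)}}\frac{e^{(x/\mu)\Lambda_\rho(u)}}{u}\,du,$$
after converting the $\mu/x$-spaced Riemann sum to its integral via standard smoothness bounds on $e^{(x/\mu)\Lambda_\rho(u)}/u$.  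I then substitute $u = u(\rho)+z\sigma/\sqrt{\mu x}$ and Taylor-expand $\Lambda_\rho$ at $u(\rho)$; since $\Lambda_\rho'(u(\rho))=0$ and $\Lambda_\rho''(u(\rho))=-\mu^2/\sigma^2+O(u(\rho))$ (Lemma~\ref{L.ustar}),
$$\tfrac{x}{\mu}\Lambda_\rho(u)=\tfrac{x}{\mu}\Lambda_\rho(u(\rho))-\tfrac{z^2}{2}+O\!\bigl(u(\rho)z^2+|z|^3/\sqrt{x}\bigr),$$
and since $u(\rho)\leq cQ(x)/x\to 0$ on $\rho\in[\hat\rho(x),1)$, the error is $o(1)$ uniformly on the effective Gaussian range $|z|\leq C\sqrt{\log x}$; contributions from $|z|>C\sqrt{\log x}$ are super-exponentially small and absorbed into the $o(1)$.

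Factoring out $e^{(x/\mu)\Lambda_\rho(u(\rho))}$, the claim reduces to
$$\frac{(1-\rho)\sigma^2}{\mu^2}\int_{z_L}^{z_R}\frac{\Phi'(z)}{u(\rho)+z\sigma/\sqrt{\mu x}}\,dz=\Phi(-\gamma(\rho,x))+o(1),$$
with $z_L=\sqrt{\mu x/\sigma^2}(u_{N(x)}-u(\rho))$ and $z_R$ analogous for $u_{M(x)}$.  Setting $\alpha=\sqrt{\mu x}\,u(\rho)/\sigma$, the identity $\alpha/(z+\alpha)=1-z/(z+\alpha)$ combined with $z\Phi'(z)=-(\Phi')'(z)$ and integration by parts gives
$$\alpha\!\int_{z_L}^{z_R}\!\frac{\Phi'(z)}{z+\alpha}dz=\Phi(-z_L)-\Phi(-z_R)-\frac{\Phi'(z_L)}{v_L}+\frac{\Phi'(z_R)}{v_R}+\!\int_{z_L}^{z_R}\!\frac{\Phi'(z)}{(z+\alpha)^2}dz,$$
where $v_L=z_L+\alpha=\sqrt{\mu x}\,u_{N(x)}/\sigma\to\infty$ and $v_R=z_R+\alpha=\sqrt{\mu x}\,u_{M(x)}/\sigma\to\infty$ (the latter by Lemma~\ref{L.b_and_w}~(b)).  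Because $|\log\rho|\leq cQ(x)/x\to 0$, Lemma~\ref{L.ustar} yields $(1-\rho)\sigma^2/(\mu^2u(\rho))=1+o(1)$ uniformly, so the prefactor collapses to $[1+o(1)]\alpha$; the four non-leading terms on the right each tend to zero uniformly, since $\Phi'\leq(2\pi)^{-1/2}$ and $v_L,v_R\to\infty$, while $\Phi(-z_R)\to 0$ because $z_R\to+\infty$.

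It remains to show $\Phi(-z_L)=\Phi(-\gamma(\rho,x))+o(1)$ uniformly on $[\hat\rho(x),1)$.  Lemma~\ref{L.ustar} gives $u(\rho)+\sigma^2\log\rho/\mu^2=O((\log\rho)^2)$, so $|z_L-\gamma(\rho,x)|\leq C\sqrt{x}(\log\rho)^2+C/\sqrt{x}$.  I handle this by splitting on the sign of $\gamma(\rho,x)$: in the regime $\gamma(\rho,x)\geq 0$ (equivalently $|\log\rho|\leq\mu^2\sqrt{\log x/x}/\sigma^2$) one has $\sqrt{x}(\log\rho)^2\leq\mu^4\log x/(\sigma^4\sqrt{x})=o(1)$, whence $|z_L-\gamma|\to 0$ directly; in the regime $\gamma(\rho,x)<0$, the relation $|\gamma|\asymp\sigma\sqrt{x}|\log\rho|/\mu^{3/2}$ forces $|z_L-\gamma|/|\gamma|=O(|\log\rho|)=o(1)$, so $z_L$ and $\gamma$ share sign, and $|z_L-\gamma|\,\Phi'(\min(|z_L|,|\gamma|))\leq C\gamma^2 e^{-\gamma^2/2}/\sqrt{x}\to 0$.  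The main obstacle of the proof is precisely this last step: the mismatch between $z_L$ and $\gamma(\rho,x)$ produced by the higher-order ($\kappa>2$) coefficients of $u(\rho)$ need not be individually small in absolute value on the heavy-tail portion of $[\hat\rho(x),1)$, and must be absorbed using the exponential smallness of the Gaussian density at $|\gamma(\rho,x)|\to\infty$.
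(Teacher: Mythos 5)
Your argument is correct in substance and follows the same global strategy as the paper (Lemma \ref{L.Lambda} to pass to the integral $\int e^{(x/\mu)\Lambda_\rho(t)}t^{-1}dt$, then Laplace's method around $u(\rho)$, with $\Phi(-\gamma)$ emerging from the truncation at the lower limit $u_{N(x)}\approx\sqrt{\log x/x}$), but the central computation is executed differently. The paper localizes to a window $[(1-\varepsilon)u(\rho),(1+\varepsilon)u(\rho)]$ with $\varepsilon=1/\log\log x$, replaces $1/t$ by $1/u(\rho)$ there, and separately kills the outside pieces via the quadratic decay of $\Lambda_\rho$ away from its maximizer, with a case split on whether $c(x)<(1-\varepsilon)u(\rho)$. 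You instead keep the factor $1/u$ exactly and evaluate $\alpha\int_{z_L}^{z_R}\Phi'(z)(z+\alpha)^{-1}dz$ in closed form via $\alpha/(z+\alpha)=1-z/(z+\alpha)$ and $z\Phi'(z)=-(\Phi')'(z)$; the boundary terms and the $\int\Phi'(z)(z+\alpha)^{-2}dz$ correction are then uniformly $O(v_L^{-1}+v_R^{-1}+\Phi(-z_R))=o(1)$ since $z+\alpha=\sqrt{\mu x}\,u/\sigma\geq v_L\to\infty$ on the whole range. This is a clean alternative that avoids both the $\varepsilon$-window and the $A/A^c$ dichotomy, at the price of having to match $\Phi(-z_L)$ with $\Phi(-\gamma(\rho,x))$ at the end. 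Your identity and the estimates $v_L\asymp\sqrt{\log x}$, $v_R\asymp\omega_1^{-1}(x)/\sqrt{x}\to\infty$ (Lemma \ref{L.b_and_w}) all check out.

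One step should be tightened. In the final matching you assert that $\gamma<0$ implies $|\gamma|\asymp\sigma\sqrt{x}|\log\rho|/\mu^{3/2}$; this fails when $\gamma$ is only slightly negative, since there the two terms of $\gamma$ nearly cancel while $\sqrt{x}|\log\rho|\asymp\sqrt{\log x}$. The conclusion is unaffected, because in that borderline band $|\log\rho|\asymp\sqrt{\log x/x}$ forces $|z_L-\gamma|\leq C\sqrt{x}(\log\rho)^2+C/\sqrt{x}=O(\log x/\sqrt{x})\to0$, so the Lipschitz bound you use in the $\gamma\geq0$ case still applies. The clean split is on $|\log\rho|\lessgtr x^{-1/3}$ (say): below the threshold $|z_L-\gamma|\to0$ and one uses $\|\Phi'\|_\infty$; above it $|\gamma|\gtrsim x^{1/6}\to\infty$, $z_L=\gamma(1+O(|\log\rho|))$, and the Gaussian-density bound you give finishes the job. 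With that repair the proof is complete.
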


\begin{proof}
Let $u_n = (x-n\mu)/x$ and define $c(x) = \sqrt{\log x/x}$, $d(x) = \omega_1^{-1}(x)/x$. Then by Lemma \ref{L.Lambda}, 
\begin{align*}
\sqrt{x} \sum_{n= M(x)+1}^{N(x)} \rho^{n} \frac{e^{n Q_\kappa\left( \frac{x-n\mu}{\sigma n} \right)}}{x-n\mu}  &= \frac{1}{\sqrt{x}} \sum_{n= M(x)+1}^{N(x)}  \frac{e^{\frac{x}{\mu} \Lambda_\rho(u_n)} }{w_n} (1+o(1)) \\
&=  \frac{\sqrt{x}}{\mu} \sum_{n= M(x)+1}^{N(x)} \int_{u_{n+1}}^{u_n} \frac{e^{\frac{x}{\mu} \Lambda_\rho(t)} }{t}  dt \, (1+o(1)) \\
&=  \frac{\sqrt{x} }{\mu} \int_{c(x)}^{d(x)} \frac{e^{\frac{x}{\mu} \Lambda_\rho(t)} }{t}  dt \, (1+o(1))
\end{align*}
as $x \to \infty$, uniformly for $\hat\rho(x) < \rho < 1$.  It remains to show that
$$\sup_{\hat\rho \leq \rho < 1} \frac{1}{e^{\frac{x}{\mu} \Lambda_\rho(u(\rho))}} \left|  \frac{\sigma \sqrt{x} (1-\rho) }{\mu^{3/2}\sqrt{2\pi}} \int_{c(x)}^{d(x)} \frac{e^{\frac{x}{\mu} \Lambda_\rho(t)} }{t}  dt -  e^{\frac{x}{\mu} \Lambda_\rho(u(\rho))} \Phi\left( -\gamma(\rho,x)  \right)  \right| \to 0$$
We start by computing the derivatives of $\Lambda_\rho(t)$: 
\begin{align*}
\Lambda_\rho'(t) = -\log\rho - \frac{\mu^2}{\sigma^2} t + O( t^2)  \qquad \text{and} \qquad  \Lambda_\rho''(t) = - \frac{\mu^2}{\sigma^2 } + O(t),
\end{align*}
and note that $t \to 0$ for all $0 \leq t \leq d(x)$.  Also, we have $u(\rho) = -\frac{\sigma^2}{\mu^2} \log\rho + O((\log\rho)^2) = \frac{\sigma^2}{\mu^2} (1-\rho) + O((1-\rho)^2) \in (0, d(x))$ for all $\hat\rho(x) \leq \rho < 1$.  Set $\varepsilon = \varepsilon(x) = 1/\log\log x$ and note that for $\hat\rho(x) \leq \rho < 1$, $u(\rho) = o(d(x))$, so for sufficiently large $x$ we have
\begin{align}
&E(\rho,x) \triangleq \left|  \frac{\sigma \sqrt{x} (1-\rho) }{\mu^{3/2}\sqrt{2\pi}} \int_{c(x)}^{d(x)} \frac{e^{\frac{x}{\mu} \Lambda_\rho(t)} }{t}  dt -  e^{\frac{x}{\mu} \Lambda_\rho(u(\rho))} \Phi\left( -\gamma(\rho,x)  \right)  \right| \notag \\
&\leq \left|  \frac{\sigma \sqrt{x} (1-\rho) }{\mu^{3/2}\sqrt{2\pi}} \int_{c(x)\vee (1-\varepsilon)u(\rho)}^{c(x) \vee (1+\varepsilon)u(\rho) } \frac{e^{\frac{x}{\mu} \Lambda_\rho(t)} }{t}  dt -  e^{\frac{x}{\mu} \Lambda_\rho(u(\rho))} \Phi\left( -\gamma(\rho,x)  \right)  \right|  \label{eq:bigTerm} \\
&\hspace{5mm} + C \sqrt{x} (1-\rho) \left( \int_{c(x) \vee (1+\varepsilon)u(\rho)}^{d(x)} \frac{e^{\frac{x}{\mu} \Lambda_\rho(t)} }{t}  dt + \int_{c(x)}^{c(x)\vee (1-\varepsilon)u(\rho)} \frac{e^{\frac{x}{\mu} \Lambda_\rho(t)} }{t}  dt \right) \label{eq:smallTerm}
\end{align}
To bound \eqref{eq:bigTerm} note that for some $\xi_t$ between $t$ and $u(\rho)$, 
\begin{align*}
&\int_{c(x)\vee (1-\varepsilon)u(\rho)}^{c(x) \vee (1+\varepsilon)u(\rho) } \frac{e^{\frac{x}{\mu} \Lambda_\rho(t)} }{t}  dt  = e^{\frac{x}{\mu} \Lambda_\rho(u(\rho)) } \int_{c(x)\vee (1-\varepsilon)u(\rho)}^{c(x) \vee (1+\varepsilon)u(\rho) } \frac{e^{ \frac{x\Lambda_\rho''(\xi_t)}{2\mu} (t-u(\rho))^2}}{t}  dt ,
\end{align*}
so \eqref{eq:bigTerm} is bounded by $e^{\frac{x}{\mu} \Lambda_\rho(u(\rho)) } F(\rho,x)$, where
$$F(\rho,x) = \left|   \frac{\sigma \sqrt{x} (1-\rho) }{\mu^{3/2}\sqrt{2\pi}} \int_{c(x)\vee (1-\varepsilon)u(\rho)}^{c(x) \vee (1+\varepsilon)u(\rho) } \frac{e^{ \frac{x\Lambda_\rho''(\xi_t)}{2\mu} (t-u(\rho))^2}}{t}  dt - \Phi\left(-\gamma(\rho,x)\right) \right|.$$
To see that $\sup_{\hat\rho(x) \leq \rho < 1} F(\rho,x) \to 0$ note that for $(1-\varepsilon) u(\rho) \leq t \leq (1+\varepsilon) u(\rho)$ we have $\Lambda_\rho''(t) = -\frac{\mu^2}{\sigma^2} + O( |\log\rho|)$ and also $t = u(\rho)(1+o(1)) = -\frac{\sigma^2}{\mu^2} \log\rho(1+o(1)) = \frac{\sigma^2}{\mu^2}(1-\rho)(1+o(1))$.  Let $A = \{ c(x) < (1-\varepsilon)u(\rho))$ and let $\zeta(\rho) = \max_{(1-\varepsilon)u(\rho) \leq t \leq (1+\varepsilon)u(\rho)} |\Lambda_\rho''(t)|$. We start by analyzing $F(\rho,x) \Indicator(A)$, for which we have
\begin{align*}
&F(\rho,x) \Indicator(A) \\
&\leq \left\{ \left| \frac{\sqrt{x\mu}}{\sigma\sqrt{2\pi}} (1+o(1)) \int_{ (1-\varepsilon)u(\rho)}^{(1+\varepsilon)u(\rho) } e^{ \frac{x\Lambda_\rho''(\xi_t)}{2\mu} (t-u(\rho))^2}  dt  - 1 \right| + \Phi\left(\gamma(\rho,x)\right) \right\} \Indicator(A) \\
&\leq \left\{ 1 - \frac{\mu}{\sigma\sqrt{ \zeta(\rho)}} (1+o(1)) \int_{ -\frac{\sqrt{x\zeta(\rho)}}{\sqrt{\mu}} \varepsilon u(\rho)}^{\frac{\sqrt{x\zeta(\rho)}}{\sqrt{\mu}} \varepsilon u(\rho) } \frac{e^{ -\frac{z^2}{2} }}{\sqrt{2\pi}}  dz + \Phi\left( \frac{\sqrt{\mu x}(1-\varepsilon)u(\rho)}{\sigma} + \frac{\sigma \sqrt{x}\log\rho}{\mu^{3/2}} \right)   \right\} \Indicator(A) \\
&\leq \left\{ 1 - (1+o(1))\left( 1 - 2\Phi\left( -\frac{\sqrt{x \zeta(\rho)}}{\sqrt{\mu}} \varepsilon u(\rho) \right) \right) +   \Phi\left(-\frac{\sqrt{\mu x}\varepsilon u(\rho)}{\sigma} (1 + O( u(\rho)/\varepsilon) ) \right)   \right\} \Indicator(A) \\
&\leq 3 \Phi\left( - \frac{\sqrt{\mu x} \varepsilon c(x)}{\sigma} (1+ o(1))  \right) + o(1) \to 0
\end{align*}
as $x \to \infty$, uniformly for $\hat\rho(x) \leq \rho < 1$. To analyze $F(\rho,x) \Indicator(A^c)$ we note that on $A^c$ we have $e^{\frac{x \Lambda_\rho''(\xi_t)}{2\mu} (t - u(\rho))^2} = e^{- \frac{x\mu}{2\sigma^2} (t-u(\rho))^2 + O(x \varepsilon^2 u(\rho)^3)} = (1+o(1)) e^{-\frac{x\mu}{2\sigma^2} (t-u(\rho))^2}$, which yields
\begin{align*}
&F(\rho,x) \Indicator(A^c) \\
&= \left| \frac{\sqrt{x\mu}}{\sigma\sqrt{2\pi}} (1+o(1)) \int_{c(x)}^{c(x) \vee (1+\varepsilon) u(\rho)} e^{-\frac{x\mu}{2\sigma^2} (t-u(\rho))^2} dt - \Phi(-\gamma(\rho,x))  \right| \Indicator(A^c) \\
&= \left|  (1+o(1)) \int_{\frac{\sqrt{x\mu}}{\sigma} (c(x)-u(\rho))}^{\frac{\sqrt{x\mu}}{\sigma}  \varepsilon u(\rho) } \frac{e^{-\frac{z^2}{2}}}{\sqrt{2\pi}} dz \, \Indicator(c(x) < (1+\varepsilon)u(\rho)) - \Phi(-\gamma(\rho,x))  \right| \Indicator(A^c) \\
&\leq \left| \Phi\left( \frac{\sqrt{\mu x}}{\sigma}(u(\rho)-c(x)) \right) - \Phi(-\gamma(\rho,x)) \right|  \Indicator(c(x) < (1+\varepsilon) u(\rho)) \Indicator(A^c) \\
&\hspace{3mm} + \Phi\left(- \frac{\sqrt{\mu x}}{\sigma} \varepsilon u(\rho) \right) \Indicator(c(x) < (1+\varepsilon) u(\rho)) + \Phi(-\gamma(\rho,x)) \Indicator(c(x) \geq (1+\varepsilon) u(\rho)) + o(1) \\
&\leq C \left| \frac{\sqrt{\mu x}}{\sigma}(u(\rho)-c(x)) + \gamma(\rho,x)  \right| \Indicator(c(x) < (1+\varepsilon)u(\rho)) \Indicator(A^c) \\
&\hspace{3mm} + \Phi\left( -\frac{\sqrt{\mu x}}{\sigma (1+\varepsilon)} \varepsilon c(x)  \right) +  \Phi\left( - \frac{\sqrt{\mu x}}{\sigma(1+\varepsilon)} \varepsilon c(x) + O(xc(x)^2)  \right)  + o(1) \\
&\leq C \sqrt{x} c(x)^2  + 2 \Phi\left( -\frac{\sqrt{\mu x}}{\sigma} \varepsilon c(x) (1+o(1))   \right) +o(1) \to 0
\end{align*}
as $x \to \infty$. We have thus shown that \eqref{eq:bigTerm} i s $o\left( e^{\frac{x}{\mu} \Lambda_\rho(u(\rho))} \right)$ as $x \to \infty$, uniformly for $\hat\rho(x) \leq \rho < 1$. We now need to show that the same is true of \eqref{eq:smallTerm}. 

Note that \eqref{eq:smallTerm} is bounded by
\begin{align*}
&C\sqrt{x} (1-\rho) \left( e^{\frac{x}{\mu} \Lambda_\rho(c(x)\vee(1+\varepsilon)u(\rho))} + e^{\frac{x}{\mu} \Lambda_\rho((1-\varepsilon)u(\rho))} \Indicator(c(x) < (1-\epsilon)u(\rho))    \right) \int_{c(x)}^{d(x)} \frac{1}{t} dt \\
&\leq C \sqrt{x} (1-\rho) \log \left( \frac{\omega_1^{-1}(x)}{\sqrt{x\log x}} \right) \left( e^{\frac{x}{\mu} \Lambda_\rho((1+\varepsilon)u(\rho))} \Indicator((1+\varepsilon)u(\rho) > c(x))  \right. \\
&\hspace{5mm} \left. + e^{\frac{x}{\mu}\Lambda_\rho(c(x))}  \Indicator((1+\varepsilon)u(\rho) \leq c(x)) +  e^{\frac{x}{\mu} \Lambda_\rho((1-\varepsilon)u(\rho))} \Indicator((1-\varepsilon)u(\rho) > c(x)) \right).
\end{align*}
Note that for any $t$ and some $\xi_t$ between $t$ and $u(\rho)$, 
\begin{align*}
\Lambda_\rho(t)  &= \Lambda_\rho(u(\rho)) - |\Lambda_\rho'(t)| |u(\rho)-t| + \frac{|\Lambda_\rho''(\xi_t)|}{2}(u(\rho)-t)^2 ,
\end{align*}
which gives that for $(1\pm \varepsilon) u(\rho) > c(x)$, $\Lambda_\rho'((1\pm \varepsilon) u(\rho) ) = \mp \frac{\varepsilon \mu^2 u(\rho)}{\sigma^2}  + O( u(\rho)^2)$, and
\begin{align*}
\Lambda_\rho((1\pm\varepsilon)u(\rho)) &\leq \Lambda_\rho(u(\rho)) - |\Lambda_\rho'((1\pm \varepsilon)u(\rho))| \varepsilon u(\rho) + \frac{\mu^2}{2\sigma^2} \varepsilon^2 u(\rho)^2 (1+o(1)) \\
&= \Lambda_\rho(u(\rho)) -  \frac{\mu^2}{2\sigma^2} \varepsilon^2 u(\rho)^2(1+ o(1)) \\
&\leq \Lambda_\rho(u(\rho)) - \frac{\mu^2}{2\sigma^2} \varepsilon^2 c(x)^2(1+o(1)). 
\end{align*}
For $(1+\varepsilon) u(\rho) < c(x)$, $\Lambda_\rho'(c(x)) = -\frac{\mu^2}{\sigma^2} (c(x)-u(\rho))+ O(c(x)^2)$, and
\begin{align*}
\Lambda_\rho(c(x)) &\leq \Lambda_\rho(u(\rho)) - |\Lambda_\rho'(c(x))| |u(\rho)-c(x)| + \frac{\mu^2}{2\sigma^2} (u(\rho)-c(x))^2 (1+o(1)) \\
&= \Lambda_\rho(u(\rho))  -\frac{\mu^2}{2\sigma^2} (c(x)-u(\rho))^2 (1+ o(1)) \\
&\leq \Lambda_\rho(u(\rho)) -  \frac{\mu^2}{2\sigma^2} \varepsilon^2 c(x)^2 (1+ o(1)).
\end{align*}
Therefore, \eqref{eq:smallTerm} is bounded by
\begin{align*}
&C \sqrt{x} (1-\rho) (\log x) e^{\frac{x}{\mu} \Lambda_\rho(u(\rho)) -  \frac{x\mu}{2\sigma^2} \varepsilon^2 c(x)^2 (1+ o(1))} = o\left( e^{\frac{x}{\mu} \Lambda_\rho(u(\rho))} \right).
\end{align*}
This completes the proof. 
\end{proof}

Finally, we give below the proof of Theorem \ref{T.Main}.

\begin{proof}[Proof of Theorem \ref{T.Main}]
Note that by Theorem \ref{T.SumApprox} we know that
$$\lim_{x \to \infty} \sup_{0 < \rho < 1} \left| \frac{P(W_\rho(\infty) > x)}{Z_\kappa(\rho, x)} - 1 \right| = 0,$$
so for the first statement of Theorem \ref{T.Main} it suffices to show that
\begin{equation} \label{eq:ZandA}
\lim_{x \to \infty} \sup_{0 < \rho < 1} \left| \frac{A_\kappa(\rho,x)}{Z_\kappa(\rho, x)} - 1 \right| = 0.
\end{equation}
The second statement, which refers to the uniformity in $x$ as $\rho \nearrow 1$ will follow from Lemma 3.3 in \cite{OlBlGl_10} once we show that $\sup_{0 < x < (1-\rho)^{-1/4}} |A_\kappa(\rho,x) -1| \to 0$ as $\rho \nearrow 1$. To see this is the case simply note that for all $0 < x < (1-\rho)^{-1/4}$
\begin{align*}
\left| A_\kappa(\rho,x) - 1\right| &\leq (1-\rho) \sum_{n=1}^{K_r(x)} n + \left| e^{\frac{x}{\mu} \Lambda_\rho(w(\rho,x))}  - 1 \right| \leq C(1-\rho)K_r(x)^2 + \left| e^{\frac{x}{\mu} \Lambda_\rho(w(\rho,x))} - 1\right| \\
&\leq C(1-\rho) x^2 + C x |\log \rho| \leq C (1-\rho)^{1/2} + C(1-\rho)^{-1/4} |\log\rho| \to 0
\end{align*}
as $\rho \nearrow 1$. We now proceed to establish \eqref{eq:ZandA}. 

Let $h_\kappa(x) = \omega_1^{-1}(x)$ if $\kappa = 2$ and $h_\kappa(x) = \sqrt{x\log x}$ if $\kappa > 2$, and set $u_n = (x-n\mu)/x$. Then,
\begin{align*}
\left| Z_\kappa(\rho,x) - A_\kappa(\rho,x) \right| &= \left| \frac{\sigma \sqrt{x}}{\sqrt{2\pi \mu}} \sum_{n=M(x)+1}^{N(x)} (1-\rho) \rho^n \frac{e^{n Q_k\left( \frac{x-n\mu}{\sigma n}\right)} }{x-n\mu} \Indicator(\kappa > 2) \right. \\
&\hspace{3mm} \left. \phantom{\sum_M^N} +  E\left[ \rho^{a(x,Z)} \Indicator\left(\sigma Z \leq \sqrt{\mu} h_k(x)/\sqrt{x} \right) \right] - e^{\frac{x}{\mu} \Lambda_\rho(w(\rho,x))}  \right|  \\
&\leq \frac{\sigma \sqrt{x}(1-\rho)}{\sqrt{2\pi \mu}}  \left| \sum_{n=M(x)+1}^{N(x)}  \rho^n \frac{e^{n Q_k\left( \frac{x-n\mu}{\sigma n}\right)} }{x-n\mu} -  \sum_{n=M(x)+1}^{N(x)} \frac{e^{\frac{x}{\mu} \Lambda_\rho(u_n) } }{xu_n} \right| \Indicator(\kappa > 2)  \\
&\hspace{3mm}  + \left|  \frac{\sigma (1-\rho)}{\sqrt{2\pi \mu x}} \sum_{n=M(x)+1}^{N(x)}  \frac{e^{\frac{x}{\mu} \Lambda_\rho(u_n) } }{u_n} \Indicator(\kappa > 2) \right. \\
&\hspace{3mm} \left. +  E\left[ \rho^{a(x,Z)} \Indicator\left(\sigma Z \leq \sqrt{\mu} h_\kappa(x)/\sqrt{x} \right) \right] - e^{\frac{x}{\mu} \Lambda_\rho(w(\rho,x))}  \right|.
\end{align*}

Define $\hat\rho(x) = e^{-2 \mu Q(x)/x}$. We separate our analysis into two cases.

{\bf Case 1:} $\hat\rho(x) \leq \rho < 1$.

Note that for this range of values of $\rho$ we have, by Lemma~\ref{L.ustar}, that $u(\rho) = -\frac{\sigma^2}{\mu^2} \log\rho(1+o(1))$, and by Lemma~\ref{L.b_and_w} (a.), that $u(\hat\rho(x)) \approx \frac{2\sigma^2 Q(x)}{\mu x} \leq \frac{\omega_1^{-1}(x)}{x}$ for all sufficiently large $x$. It follows that $w(\rho,x) = \min\{ u(\rho), \omega_1^{-1}(x)/x  \} = u(\rho)$. Also, by Lemma \ref{L.Lambda}  we have that there exists a function $\varphi_1(x) \downarrow 0$ as $x \to \infty$ such that
\begin{align}
&\left| Z_\kappa(\rho,x) - A_\kappa(\rho,x) \right| \notag \\
&\leq \varphi_1(x)  \frac{\sigma\sqrt{x}(1-\rho)}{\sqrt{2\pi \mu}} \sum_{n=M(x)+1}^{N(x)} \frac{e^{\frac{x}{\mu} \Lambda_\rho(u_n)}}{x u_n} \Indicator(\kappa > 2)  \label{eq:watson1} \\
&\hspace{3mm} + \left| \frac{\sigma (1-\rho)}{\sqrt{2\pi \mu x}} \sum_{n=M(x)+1}^{N(x)}  \frac{e^{\frac{x}{\mu} \Lambda_\rho(u_n) } }{u_n}  - e^{\frac{x}{\mu} \Lambda_\rho(u(\rho))} \Phi(-\gamma(\rho,x)) \right| \Indicator(\kappa > 2) \label{eq:watson2} \\
&\hspace{3mm} + \left|  e^{\frac{x}{\mu} \Lambda_\rho(u(\rho))} \Phi(-\gamma(\rho,x)) \Indicator(\kappa > 2) + E\left[ \rho^{a(x,Z)} \Indicator\left(\sigma Z \leq \sqrt{\mu} h_\kappa(x)/\sqrt{x} \right) \right]  - e^{\frac{x}{\mu} \Lambda_\rho(u(\rho))}  \right| \notag
\end{align}
where $\gamma(x,\rho) = \sqrt{\mu \log x}/\sigma + \sigma\sqrt{x}\log\rho/\mu^{3/2}$. Furthermore, by Lemma \ref{L.Watson} we have that \eqref{eq:watson1} and \eqref{eq:watson2} are bounded by 
$$\varphi_1(x) e^{\frac{x}{\mu} \Lambda_\rho(u(\rho))} \left(   \Phi(-\gamma(\rho,x)) + \varphi_2(x)   \right) + \varphi_2(x) e^{\frac{x}{\mu} \Lambda_\rho(u(\rho))} $$
for some other $\varphi_2(x) \downarrow 0$. Since by Lemma \ref{L.LowerBound} we have that $Z_\kappa(\rho,x) \geq C e^{\frac{x}{\mu} \Lambda_\rho(u(\rho))}$ on $\hat\rho(x) \leq \rho < 1$, it only remains to show that the term following \eqref{eq:watson2} is $o\left(e^{\frac{x}{\mu} \Lambda_\rho(u(\rho))}\right)$. First we notice that exact computation gives
\begin{align*}
& \left|  e^{\frac{x}{\mu} \Lambda_\rho(u(\rho))} \Phi(-\gamma(\rho,x)) \Indicator(\kappa > 2) + E\left[ \rho^{a(x,Z)} \Indicator\left(\sigma Z \leq \sqrt{\mu} h_k(x)/\sqrt{x} \right) \right]  - e^{\frac{x}{\mu} \Lambda_\rho(u(\rho))}  \right| \\
&= e^{\frac{x}{\mu} \Lambda_\rho(u(\rho))} \left|  \Phi(-\gamma(\rho,x)) \Indicator(\kappa > 2) + e^{\frac{x}{\mu} \left( \log\rho + \frac{\sigma^2(\log\rho)^2}{2\mu^2} -  \Lambda_\rho(u(\rho)) \right) } \Phi\left( \frac{\sqrt{\mu} h_\kappa(x)}{\sigma\sqrt{x}} + \frac{\sigma \sqrt{x}\log\rho}{\mu^{3/2}} \right) -  1 \right| \\
&= \begin{cases}
e^{\frac{x}{\mu} \Lambda_\rho(u(\rho))}   \Phi\left( -\frac{\sqrt{\mu} \omega_1^{-1}(x)}{\sigma\sqrt{x}} - \frac{\sigma \sqrt{x}\log\rho}{\mu^{3/2}} \right)  , & \kappa = 2, \\
e^{\frac{x}{\mu} \Lambda_\rho(u(\rho))} \Phi\left( \gamma(\rho,x) \right)  \left|   e^{\frac{x}{\mu} \left( \log\rho + \frac{\sigma^2(\log\rho)^2}{2\mu^2} -  \Lambda_\rho(u(\rho)) \right) } - 1 \right| , & \kappa > 2.
\end{cases}
\end{align*}
When $\kappa = 2$ we simply have
$$\sup_{\hat\rho(x) \leq \rho <1}  \Phi\left( -\frac{\sqrt{\mu} \omega_1^{-1}(x)}{\sigma\sqrt{x}} - \frac{\sigma \sqrt{x}\log\rho}{\mu^{3/2}} \right) = \Phi\left( -\frac{\sqrt{\mu} \omega_1^{-1}(x)}{\sigma\sqrt{x}} + \frac{2\sigma  Q(x)}{\mu^{1/2} \sqrt{x}} \right) \to 0$$ 
as $x \to \infty$, since by Lemma \ref{L.b_and_w} (a.) $Q(x)/\omega_1^{-1}(x) \to 0$. When $\kappa > 2$  note that
\begin{align*}
&\sup_{\hat\rho(x) \leq \rho < 1} \Phi\left( \gamma(\rho,x) \right)  \left|   e^{\frac{x}{\mu} \left( \log\rho + \frac{\sigma^2(\log\rho)^2}{2\mu^2} -  \Lambda_\rho(u(\rho)) \right) } - 1 \right| \\
&\leq C \sup_{\hat\rho(x) \leq \rho < 1}  \Phi\left(  \frac{\sqrt{\mu \log x}}{\sigma} - \frac{\sigma\sqrt{x}|\log\rho|}{\mu^{3/2}} \right)  x|\log\rho|^3 \\
&\leq \frac{C}{\sqrt{x}} \sup_{0 < t \leq 2\sigma Q(x)/ \sqrt{\mu x}} \Phi\left( \frac{\sqrt{\mu \log x}}{\sigma} - t \right) t^3 \\
&\leq \frac{C}{x^{1/8}} + \frac{C}{\sqrt{x}} \sup_{t \geq x^{1/8}} \Phi\left( -t \left(1 - \frac{\sqrt{\mu \log x}}{\sigma x^{1/8}} \right) \right) t^3 \to 0
\end{align*}
as $x \to \infty$.

{\bf Case 2:} $0 < \rho \leq \hat\rho(x)$. 

For this range of values of $\rho$ we use Lemma \ref{L.LowerBound} to obtain that $Z_\kappa(\rho,x) \geq C \rho (1-\rho)^{-1} e^{-Q(x)}$, which together with Lemma \ref{L.Lambda} gives, for $0 < \rho \leq \hat\rho(x)$, 
\begin{align*}
&\frac{|Z_\kappa(\rho,x) - A_\kappa(\rho,x)|}{Z_\kappa(\rho,x)} \leq \frac{C(1-\rho) e^{Q(x)}}{\rho} \left\{ \frac{(1-\rho)}{\sqrt{x}} \sum_{n=M(x)+1}^{N(x)} \frac{e^{\frac{x}{\mu} \Lambda_\rho(u_n)} }{u_n} \Indicator(\kappa > 2) \right. \\
&\hspace{-2mm} \left. \phantom{\sum_M^N} + E\left[ \rho^{a(x,Z)} \Indicator\left(\sigma Z \leq \sqrt{\mu} h_\kappa(x)/\sqrt{x} \right) \right] + e^{\frac{x}{\mu} \Lambda_\rho(w(\rho,x))}  \right\} \\
&\leq C\rho^{-1} e^{Q(x)} \left\{ \sqrt{x} e^{\frac{x}{\mu} \Lambda_\rho( w(\rho,x) )} \int_{\sqrt{x\log x}}^{\omega_1^{-1}(x)+\mu} \frac{1}{u} du \,  \Indicator(\kappa > 2) \right. \\
&\hspace{3mm} \left.  + e^{\frac{x}{\mu} \log\rho + \frac{\sigma^2x(\log\rho)^2}{2\mu^3} } \Phi\left( \frac{\sqrt{\mu} h_\kappa(x)}{\sigma\sqrt{x}} + \frac{\sigma \sqrt{x}\log\rho}{\mu^{3/2}} \right) + e^{\frac{x}{\mu} \Lambda_\rho(w(\rho,x))}  \right\}.
\end{align*}

Let $\zeta = \max\{1, 2\mu^2/\sigma^2\}$ and $\bar\rho(x) = e^{-\zeta \omega_1^{-1}(x)/x}$ and note that 
\begin{align*} 
&\sup_{0 < \rho < \bar\rho(x)} \frac{|Z_\kappa(\rho,x) - A_\kappa(\rho,x)|}{Z_\kappa(\rho,x)} \leq \sup_{0 < \rho < \bar\rho(x)} C\rho^{-1} e^{Q(x)} \left\{ \sqrt{x} \log x \, e^{\frac{x}{\mu} \Lambda_\rho(\omega_1^{-1}(x)/x)} \phantom{\left( \frac{h_\kappa}{|\log\rho|} \right)} \right. \\
&\hspace{15mm} \left. + e^{\frac{x}{\mu} \log\rho + \frac{\sigma^2x(\log\rho)^2}{2\mu^3}  } \Phi\left( - \frac{\sigma \sqrt{x}|\log\rho|}{\mu^{3/2}} \left(1 - \frac{\mu^2 h_\kappa(x)}{\sigma^2 x|\log \rho|}   \right)\right)  \right\} \\
&\leq \sup_{0 < \rho < \bar\rho(x)} C \rho^{-1} e^{Q(x)} \left\{  x e^{\frac{x}{\mu} \Lambda_\rho(\omega_1^{-1}(x)/x)} + \frac{1}{\sqrt{x}|\log\rho|} e^{\frac{x}{\mu} \log\rho + \frac{\sigma^2x(\log\rho)^2}{2\mu^3} - \frac{\sigma^2 x(\log\rho)^2}{2\mu^3} \left(1 - \frac{\mu^2 h_\kappa(x)}{\sigma^2 x|\log\rho|} \right)^2 }   \right\} \\
&\leq C e^{Q(x)} \sup_{0 < \rho < \bar\rho(x)} \left\{  x e^{\frac{(x-\omega_1^{-1}(x) - \mu )}{\mu} \log\rho + O\left(\frac{(\omega_1^{-1}(x))^2}{x}\right)} + \frac{1}{\sqrt{x}|\log\rho|} e^{\frac{(x-h_\kappa(x)-\mu)}{\mu} \log\rho  } \right\} \\
&= C e^{Q(x)} \left\{ x e^{-\frac{\zeta\omega_1^{-1}(x)}{\mu} (1+o(1))} + \frac{\sqrt{x}}{\omega_1^{-1}(x)} e^{- \frac{\zeta\omega_1^{-1}(x)}{\mu} (1+o(1))}  \right\},
\end{align*}
which converges to zero as $x \to \infty$ since by Lemma \ref{L.b_and_w}, $Q(x)/\omega_1^{-1}(x) \to 0$ and $\sqrt{x}/\omega_1^{-1}(x) \to 0$.  For the range $\bar\rho(x) \leq \rho \leq \hat\rho(x)$ we have, by Lemma \ref{L.ustar}, 
\begin{align*}
&\sup_{\bar\rho(x) \leq \rho \leq \hat\rho(x)} \frac{|Z_\kappa(\rho,x) - A_\kappa(\rho,x)|}{Z_\kappa(\rho,x)} \\
&\leq  \sup_{\bar\rho(x) \leq \rho \leq \hat\rho(x)} C e^{Q(x)} \left\{ \sqrt{x}\log x e^{\frac{x}{\mu} \Lambda_\rho(u(\rho))} \Indicator(\kappa > 2) +  e^{\frac{x}{\mu} \log\rho + \frac{\sigma^2x(\log\rho)^2}{2\mu^3} }  + e^{\frac{x}{\mu} \Lambda_\rho(u(\rho))}   \right\} \\
&\leq \sup_{\bar\rho(x) \leq \rho \leq \hat\rho(x)} C e^{Q(x)} x e^{\frac{x}{\mu} \log\rho + \frac{\sigma^2 x (\log\rho)^2}{2\mu^3}  (1 + o(1))} \\
&= C x e^{Q(x) + \frac{x}{\mu} \log\hat\rho(x) (1+o(1))}  = C x e^{-  Q(x) (1+o(1))},
\end{align*}
which also converges to zero as $x \to \infty$ since $Q(x) \geq 2\log x$ by \eqref{eq:LowerBoundQ}. This completes the proof.
\end{proof}

%%% Numerical examples
\section{Numerical examples} \label{S.Numerical}

We conclude the paper with two examples comparing simulated values of $P(W_\infty(\rho) > x)$ to the approximations $Z_\kappa(\rho,x)$ and $A_\kappa(\rho,x)$ suggested by Theorems \ref{T.SumApprox} and \ref{T.Main}. For illustration purposes we also plot the heavy-tail and heavy-traffic approximations
$$\frac{\rho}{1-\rho} \overline{F}(x) \qquad \text{and} \qquad \exp\left\{ - \frac{x}{\mu} (1-\rho)\right\}.$$
The simulated values of $P(W_\infty(\rho) > x)$ were obtained using the conditional Monte Carlo algorithm from \cite{AsmKro06}, and each point was estimated using 100,000 simulation runs. We point out that simulating heavy-tailed queues in heavy traffic is very difficult, and in particular, the simulated values of $P(W_\infty(\rho) > x)$ for pairs $(x,\rho)$ in the region around the point where the queue's behavior transitions from the heavy traffic regime into the heavy tail regime, are highly unreliable. In terms of the approximations $Z_\kappa(\rho,x)$ and $A_\kappa(\rho,x)$ suggested in this paper, they tend to be sensitive to the mean and variance of the integrated tail distribution, $\mu$ and $\sigma$, respectively, so we suggest first scaling the queue in such a way that both parameters are small (of order one).  We give two examples below, one in which the integrated tail distribution is lognormal and one where it is heavy-tailed Weibull; note that no M/G/1 queue can have exactly Weibull integrated tail distribution, since its density is not monotone, but there are  valid distributions (with decreasing densities) whose tail is asymptotically Weibull. For the lognormal$(\alpha,\beta)$ example we used $Q(x) = (\log x - \alpha)^2/(2\beta^2)$, which although an approximation to $\log \overline{F}(x)$ works well in practice. 

\begin{figure}[htp]
\centering
\frame{
\begin{picture}(350,270)
%\put(0,5){\includegraphics[scale = 0.7, bb = 50 208 550 570,clip]{LognormalFig}}
\put(0,5){\includegraphics[scale = 0.75]{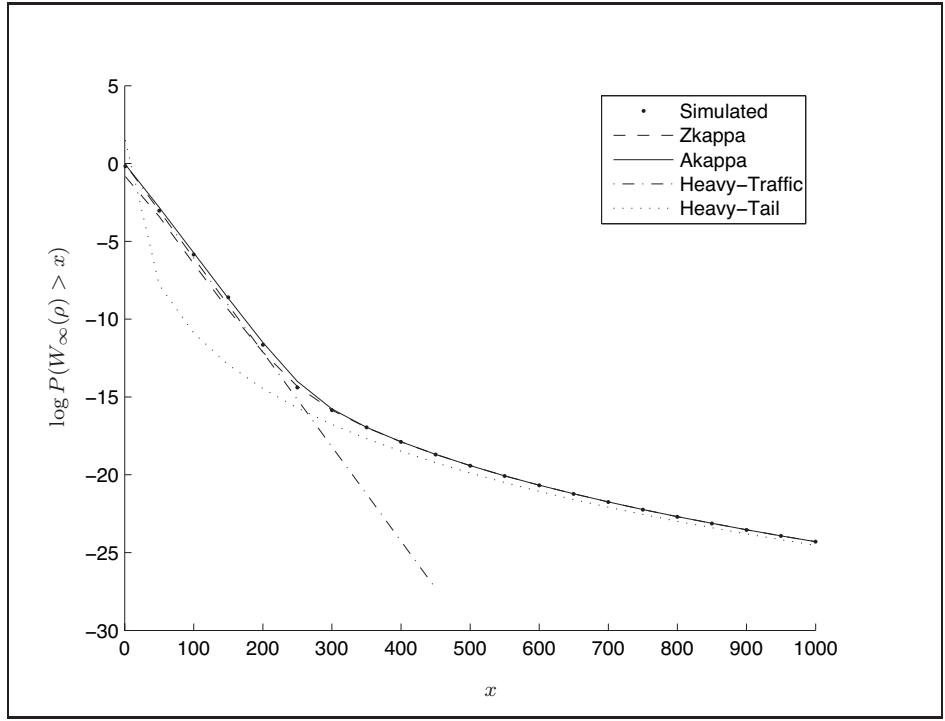}}
\put(15,110){\rotatebox{90}{$\log P(W_\infty(\rho) > x)$}}
\put(180,8){$x$}
\end{picture}
}
\caption{Lognormal$(\alpha,\beta)$ integrated tail with $\rho = 0.9$, $\alpha = 0$, $\beta = 1$.}
\end{figure}

\begin{figure}[htp]
\centering
\frame{
\begin{picture}(350,270)
%\put(0,5){\includegraphics[scale = 0.7, bb = 50 208 550 570, clip]{WeibullFig}}
\put(0,5){\includegraphics[scale = 0.75]{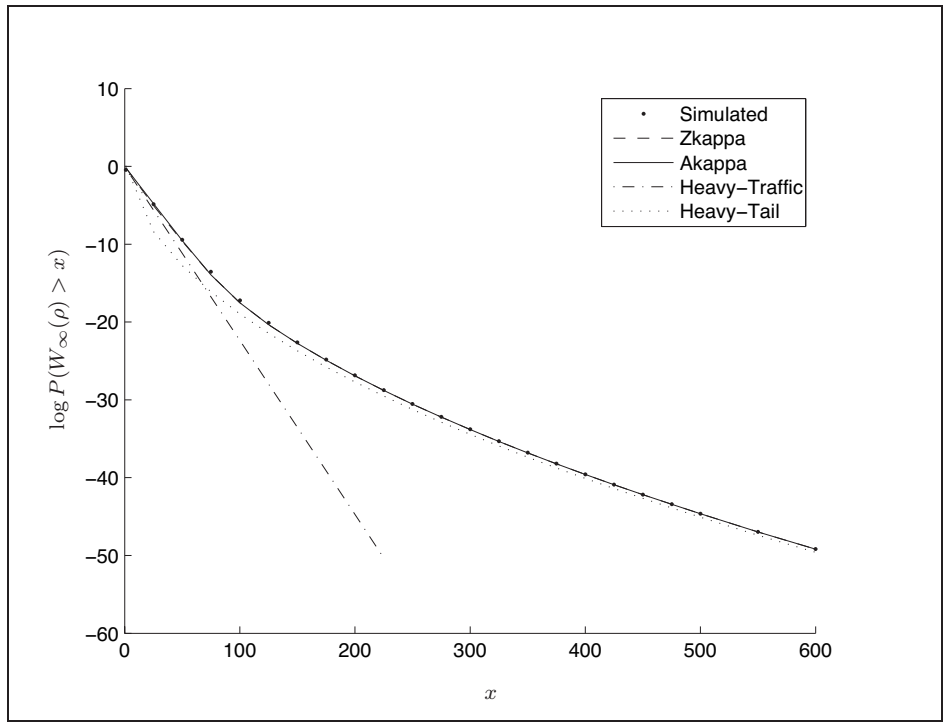}}
\put(15,110){\rotatebox{90}{$\log P(W_\infty(\rho) > x)$}}
\put(180,8){$x$}
\end{picture}
}
\caption{Weibull$(\alpha,\beta)$ integrated tail with $\rho = 0.9$, $\alpha = 0.5$, $\beta = 0.22361$.}
\end{figure}

%%%% Acknowledgements
\section*{Acknowledgements}
The authors would like to thank two anonymous referees for their valuable comments which helped improve the presentation of the paper.

\newpage

%%%% Notation Index

\begin{center}
{\bf Table of Notation} \\
(in order of appearance, first time it appears) 

\bigskip

\begin{tabular}[h]{l l l l l}
$F(t)$, $\overline{F}(t)$ & \S \ref{S.ModelDescription}  & & $\Lambda_\rho(t)$ & \eqref{eq:LambdaDef}  \\
$G(t)$, $\overline{G}(t)$ & \S \ref{S.ModelDescription}  & & $u(\rho)$ & \S \ref{S.MainResults} in Lemma \ref{L.ustar}     \\
$Q(t)$ & \S \ref{S.ModelDescription} & & $P_\kappa(\rho,x)$ & \S \ref{S.MainResults} in Lemma \ref{L.ustar}  \\ 
$q(t)$ & \S \ref{S.ModelDescription}  & & $A_\kappa(\rho,x)$ & \eqref{eq:A_Def} \\
$r$ & \eqref{eq:r_Def} & & $w(\rho,x)$ & \eqref{eq:A_Def}    \\
$a(r)$ & \S \ref{S.ModelDescription} in Assumption \ref{A.Hazard} & & $V(t)$, $\overline{V}(t)$ & \S \ref{S.UniformRW} in Assumption \ref{A.StdCase}   \\
$\beta$ & \eqref{eq:LowerBoundQ} & & $\tilde Q(t)$ & \S \ref{S.UniformRW} in Assumption \ref{A.StdCase} and \eqref{eq:tildeQ_Def}  \\
$\mu$ & \S \ref{S.ModelDescription} & & $D(t)$ &  \S \ref{S.UniformRW} in Assumption \ref{A.StdCase}   \\
$\sigma^2$ & \S \ref{S.ModelDescription} & &  $\tilde r$ & \S \ref{S.UniformRW} in Assumption \ref{A.StdCase}  \\
$\kappa$ & \eqref{eq:kappa} & & $\tilde S_n$ & \S \ref{S.UniformRW}    \\
$W_\infty(\rho)$ & \S \ref{S.MainResults} & & $b(t)$, $b^{-1}(t)$ & \eqref{eq:b_Def}  \\
$S_n$ & \S \ref{S.MainResults} & & $L(h)$  & \S \ref{S.UniformRW} in Lemma \ref{L.Roz_pi} \\
$\Phi(t)$ & \eqref{eq:CLT} & & $H(z)$  & \S \ref{S.UniformRW} in Lemma \ref{L.Roz_pi}      \\
$Q_\kappa(t)$ & \eqref{eq:Q_kappa} & & $\pi(z,n)$ & \eqref{eq:Roz_pi}   \\
$Y_1$ & \S \ref{S.MainResults} & & $\tilde \pi(y,n)$  & \eqref{eq:tildePi}    \\
$\omega_1(t), \omega_1^{-1}(t)$ & \S \ref{S.MainResults} & & $J(y,n)$ & \eqref{eq:Jint}  \\
$\omega_2(t), \omega_2^{-1}(t)$ & \S \ref{S.MainResults} & & $C_n$ & \eqref{eq:Roz_Cn}  \\
$K_r(x)$ & \S \ref{S.MainResults} & & $\hat\pi_\kappa(x,n)$ & \eqref{eq:hatPi_Def}  \\
$M(x)$ & \S \ref{S.MainResults} & & $S_\kappa(\rho,x)$ & \eqref{eq:S_rho_x}   \\
$N(x)$ & \S \ref{S.MainResults} & & $\hat\rho(x)$ & \S \ref{S.SumApproxProof} in Lemma \ref{L.LowerBound} \\
$Z_\kappa(\rho,x)$ & \eqref{eq:Z_Def_2} and \eqref{eq:Z_Def_big2} & & $E_i(\rho,x)$, $i = 1,2,3$ & \S \ref{S.SumApproxProof}   \\
$Z$ & \S \ref{S.MainResults} in \eqref{eq:Z_Def_2} and \eqref{eq:Z_Def_big2} & & $h_\kappa(x)$ & \S \ref{S.SumApproxProof} in Lemma \ref{L.E2bound}   \\
$a(x,z)$ & \S \ref{S.MainResults} in \eqref{eq:Z_Def_2} and \eqref{eq:Z_Def_big2}  & & $\gamma(x,\rho)$  & \S \ref{S.MainProof} in Lemma \ref{L.Watson}   \\
\end{tabular}

\end{center}

\bibliographystyle{plain}

\begin{thebibliography}{10}

\bibitem{Asm2003}
S.~Asmussen.
\newblock {\em Applied Probability and Queues}.
\newblock Applications of Mathematics. Springer, New York, 2nd edition, 2003.

\bibitem{AsmKro06}
S.~Asmussen and D.P. Kroese.
\newblock Improved algorithms for rare event simulation with heavy tails.
\newblock {\em Adv. Appl. Prob.}, 38:545--558, 2006.

\bibitem{Bal_Dal_Klupp_04}
A.~Baltr\={u}nas, D.J. Daley, and C.~Kl\"uppelberg.
\newblock Tail behaviour of the busy period of a {GI/GI/1} queue with
  subexponential service times.
\newblock {\em Stochastic Process. Appl.}, 111(2):237--258, 2004.

\bibitem{Bal_Klupp_04}
A.~Baltr\={u}nas and C.~Kl\"uppelberg.
\newblock Subexponential distributions --- large deviations with applications
  to insurance and queueing models.
\newblock {\em Aust. N. Z. J. Stat.}, 46:145--154, 2004.

\bibitem{BlGl07}
J.~Blanchet and P.~Glynn.
\newblock Uniform renewal theory with applications to expansions of random
  geometric sums.
\newblock {\em Adv. in Appl. Probab.}, 39(4):1070--1097, 2007.

\bibitem{Bor00}
A.~Borovkov.
\newblock Estimates for the distribution of sums and maxima of sums of random
  variables without the {Cram\'er} condition.
\newblock {\em Siberian Math J.}, 41(5):997--1038, 2000.

\bibitem{Bor00b}
A.~Borovkov.
\newblock Large deviation probabilities for random walks with semiexponential
  distributions.
\newblock {\em Siberian Math J.}, 41(6):10061--1093, 2000.

\bibitem{Borov_Borov_2008}
A.A. Borovkov and K.A. Borovkov.
\newblock {\em Asymptotic {A}nalysis of {R}andom {W}alks}.
\newblock Cambridge University Press, New York, 2008.

\bibitem{Den_Die_Shn_08}
D.~Denisov, A.B. Dieker, and V.~Shneer.
\newblock Large deviations for random walks under subexponentiality: the
  big-jump domain.
\newblock {\em Ann. Probab.}, 36(5):1946--1991, 2008.

\bibitem{EmVe82}
P.~Embrechts and N.~Veraverbeke.
\newblock Estimates for the probability of ruin with special emphasis on the
  possibility of large claims.
\newblock {\em Insurance: Mathematics and Economics}, 1:55--72, 1982.

\bibitem{IgWh70a}
D.L. Iglehart and W.~Whitt.
\newblock Multiple channel queues in heavy traffic, {I}.
\newblock {\em Adv. Appl. Prob.}, 2:150--177, 1970.

\bibitem{IgWh70b}
D.L. Iglehart and W.~Whitt.
\newblock Multiple channel queues in heavy traffic, {II}.
\newblock {\em Adv. Appl. Prob.}, 2:355--369, 1970.

\bibitem{Jel_Mom_03}
P.~Jelenkovi\'{c} and P.~Momcilovi\'{c}.
\newblock Large deviation analysis of subexponential waiting times in a
  processor sharing queue.
\newblock {\em Math. Oper. Res.}, 28(3):587--608, 2003.

\bibitem{Jel_Mom_04}
P.~Jelenkovi\'{c} and P.~Momcilovi\'{c}.
\newblock Large deviations of square root insensitive random sums.
\newblock {\em Math. Oper. Res.}, 29(2):398--406, 2004.

\bibitem{Nagaev_65}
S.V. Nagaev.
\newblock Some limit theorems for large deviations.
\newblock {\em Theory Probab. Appl.}, 10(2):214--235, 1965.

\bibitem{Nagaev_69a}
S.V. Nagaev.
\newblock Integral limit theorems taking large deviations into account when
  {C}ram\'{e}r's condition does not hold. {I}.
\newblock {\em Theory Probab. Appl.}, 14(1):51--64, 1969.

\bibitem{OlBlGl_10}
M.~Olvera-Cravioto, J.~Blanchet, and P.W. Glynn.
\newblock On the transition from heavy traffic to heavy tails for the {M}/{G}/1
  queue: The regularly varying case.
\newblock To appear in Ann. Appl. Probab., 2010.

\bibitem{Petrov1975}
V.~V. Petrov.
\newblock {\em Sums of Independent Random Variables}.
\newblock Springer-Verlag, Berlin, 1975.

\bibitem{Roz_89}
L.V. Rozovskii.
\newblock Probabilities of large deviations of sums of independent random
  variables with common distribution function in the domain of attraction of
  the normal law.
\newblock {\em Theory Probab. Appl.}, 34:625--644, 1989.

\bibitem{Roz_93}
L.V. Rozovskii.
\newblock Probabilities of large deviations on the whole axis.
\newblock {\em Theory Probab. Appl.}, 38:53--79, 1993.

\bibitem{Roz_99}
L.V. Rozovsky.
\newblock On the {C}ram\'{e}r series coefficients.
\newblock {\em Theory Probab. Appl.}, 43:152--157, 1999.

\bibitem{Wh95}
W.~Whitt, J.~Abate, and G.L. Choudhury.
\newblock Exponential approximations for tail probabilities in queues, {I}:
  Waiting times.
\newblock {\em Oper. Res.}, 43:885--901, 1995.

\bibitem{Zwart_01}
A.P. Zwart.
\newblock Tail asymptotics for the busy period in the {GI/G/1} queue.
\newblock {\em Math. Oper. Res.}, 26(3):485--493, 2001.

\end{thebibliography}

\end{document}